\newtheorem{theorem}{Theorem}[section]
\newtheorem{lemma}[theorem]{Lemma}
\newtheorem{corollary}[theorem]{Corollary}
\newtheorem{proposition}[theorem]{Proposition}
\theoremstyle{definition}
\newtheorem{definition}[theorem]{Definition}
\newtheorem{example}[theorem]{Example}
\newtheorem{examples}[theorem]{Examples}
\theoremstyle{remark}
\newtheorem{remark}[theorem]{Remark}
 \DeclareMathOperator{\Hom}{Hom}
\DeclareMathOperator{\Ext}{Ext} \DeclareMathOperator{\End}{End}
\DeclareMathOperator{\fdmod}{mod}
\DeclareMathOperator*{\lmod}{\!-\mathrm{mod}}
\DeclareMathOperator*{\Modr}{\mathrm{Mod}-\!}
\DeclareMathOperator*{\modr}{\mathrm{mod}-\!}
\DeclareMathOperator{\Mcolim}{Mcolim}
\DeclareMathOperator{\hocolim}{\mathrm{hocolim}}
\DeclareMathOperator{\holim}{\mathrm{holim}}
\DeclareMathOperator{\RHom}{\mathbf{R}Hom}
\newcommand{\Dcal}{\ensuremath{\mathcal{D}}}
\newcommand{\Xcal}{\ensuremath{\mathcal{X}}}
\newcommand{\Ycal}{\ensuremath{\mathcal{Y}}}
\newcommand{\Tcal}{\ensuremath{\mathcal{T}}}
\newcommand{\T}{\ensuremath{\mathcal{T}}}
\newcommand{\Gcal}{\ensuremath{\mathcal{G}}}
\newcommand{\Fcal}{\ensuremath{\mathcal{F}}}
\newcommand{\Ccal}{\ensuremath{\mathcal{C}}}
\newcommand{\Bcal}{\ensuremath{\mathcal{B}}}
\newcommand{\Ab}{\mathsf{Ab}}
\newcommand{\Acal}{\ensuremath{\mathcal{A}}}
\newcommand{\Wcal}{\ensuremath{\mathcal{W}}}
\newcommand{\Hcal}{\ensuremath{\mathcal{H}}}
\newcommand{\Scal}{\ensuremath{\mathcal{S}}}
\newcommand{\Pcal}{\ensuremath{\mathcal{P}}}
\newcommand{\Ucal}{\ensuremath{\mathcal{U}}}
\newcommand{\Vcal}{\ensuremath{\mathcal{V}}}
\newcommand{\Zcal}{\ensuremath{\mathcal{Z}}}
\newcommand{\Add}{\mathrm{Add}}
\newcommand{\add}{\mathrm{add}}
\newcommand{\Prod}{\mathrm{Prod}}
\newcommand{\Zbb}{\mathbb{Z}}
\newcommand{\p}{\mathfrak{p}}
\newcommand{\Spec}{\mathrm{Spec}}
\newcommand{\Supp}{\mathrm{Supp}}
\DeclareMathOperator{\Ker}{Ker}
\numberwithin{equation}{section}
\begin{document}

\title{Silting objects}
\author{Lidia Angeleri H\"ugel}
\address{Lidia Angeleri H\"ugel, Dipartimento di Informatica - Settore di Matematica, Universit\`a degli Studi di Verona, Strada le Grazie 15 - Ca' Vignal, I-37134 Verona, Italy} \email{lidia.angeleri@univr.it}
%\keywords{torsion pair, silting, cosilting, t-structure, Grothendieck category}
%\keywords{Preliminary version of \today, not for distribution}
%\subjclass[2010]{18E15,18E30,18E40}
\maketitle

%%%%%%%%%%%%%%%%%%%%%%%%%%%%%%%%%%%%%%%%%%%%%%%%%%%%%%%%%%%%%%%%%%%%%%%%%%%%%%%%%%%%%%

\begin{abstract}
We give an overview of recent developments in silting theory.  After an introduction on torsion pairs in triangulated categories, we discuss and compare different notions of silting and explain the interplay with t-structures and co-t-structures.  We then focus on silting and cosilting objects in a triangulated category with coproducts and study the case of the unbounded derived category of a ring. We close the survey with some classification results over commutative noetherian rings and silting-discrete algebras.\end{abstract}
{\small\tableofcontents}

%%%%%%%%%%%%%%%%%%%%%%%%%%%%%%%%%%%%%%%%%%%%%%%%%%%%%%%%%%%%%%%%%%%%%%%%%%%%%%%%%%%%%%

\section{Introduction} 

%\textcolor{blue}{names to be mentioned:}

%\textcolor{blue}{check z and s}

Silting theory is a topic originating from representation theory of algebras with far-reaching applications. As an extension of tilting theory, it encompasses methods for studying derived equivalences which are widely employed in many areas of research, ranging from algebraic geometry and topology to Lie theory and cluster theory.
Tilting theory starts from the problem of determining when two rings have the same representation theory. Morita proved that two rings  have equivalent module categories if and only if one  arises as the endomorphism ring of a special module, called  progenerator, over the other. Rickard carried this phenomenon to the derived setting, proving that  two rings have equivalent derived module categories if and only if one  arises as the endomorphism ring of a special complex, called  tilting complex, over the other. These techniques were later extended by Happel, Reiten and Smal\o\  to  abelian categories and revealed the existence of derived equivalences between categories of representations and categories coming from other areas of mathematics, most notably categories of geometric origin. 

Silting theory goes a step further and considers a larger collection of abelian subcategories of the derived category, including some that are not derived equivalent to the original category.  Silting objects in triangulated categories determine torsion pairs, that is, decompositions into orthogonal subcategories from which the original category can be reconstructed by taking extensions. They are thus intimately related with approximation theory and localisation theory. 

Another important motivation for the recent interest in silting theory comes from cluster theory. The categorification of cluster algebras via representation theory allows to interpret clusters as silting objects and cluster mutation as mutation of silting objects. Silting mutation
is an operation that produces a new silting object from a given one by replacing a summand. It
can be viewed as a completion of tilting mutation, since it overcomes the problem that mutations of tilting objects are not always possible.
In fact, in many interesting situations iterated silting mutations even act transitively on the set of silting objects.

The notion of silting appeared for the first time in 1988 in a paper of Keller and Vossieck \cite{KV} devoted to the classification of certain t-structures. Silting complexes were introduced as a generalisation of the fundamental notion of a tilting complex.
In the following years, the topic was studied only sporadically, e.g.~in \cite{HKM,AST}, until 2012, when it was rediscovered by Aihara and Iyama in connection with cluster mutation \cite{AI}. In a parallel development, silting objects also reemerged in the context of approximation theory for triangulated categories. Results of Mendoza, S\'aenz, Santiago and Souto Salorio \cite{MSSS}, Wei \cite{Wei}, Keller and Nicol\'as \cite{KN}, and Koenig and Yang \cite{KY} establish important correspondences relating silting complexes, t-structures and co-t-structures. A survey on these connections and on the interaction with cluster theory can be found in \cite{BY}.

An important role is played by the case of a two-term silting complex, which had already been investigated by Hoshino, Kato and Miyachi in \cite{HKM}. The zero cohomologies of such complexes form an interesting class of modules, first studied systematically by Adachi, Iyama and Reiten in \cite{AIR}, which parametrise torsion pairs in module categories and are also related to ring theoretic localisation \cite{AMV1, AMV2}. Ongoing work on these modules concerns several representation theoretic and combinatorial aspects, see e.g.~\cite{BZ1,BZ2,DIJ,IRTT,AMV4,BMmodule,BuM2}. Some of these topics are treated in the surveys \cite{IR} and \cite{abundance}.

In this account, we will follow a more abstract approach centred around
the notion  of a silting object in a triangulated category with coproducts, introduced independently by Psaroudakis and Vit\'oria \cite{PV}, and by Nicol\'as, Saor\'in and Zvonareva in \cite{NSZ}. We will focus on the interplay with t-structures and co-t-structures. It will turn out that silting objects correspond bijectively to certain triples, called TTF triples, formed by a t-structure with a left adjacent co-t-structure. Although no compactness is required in our definition of silting, we will see that the related TTF triples are often Òof finite typeÓ, in the sense that they are determined by a set of compact objects, cf.~\cite{MV}.

We will also consider the dual notion of a cosilting object. Cosilting objects are often pure-injective, in which case they give rise to TTF triples formed by a t-structure with a right adjacent co-t-structure. Pure-injectivity entails that the t-structure has a number of nice properties. For example, its heart, which is known to be an abelian category, is even a Grothendieck category in this case.
In particular, it turns out that every nondegenerate compactly generated t-structure occurs in such a TTF-triple and thus shares these properties, cf.~\cite{AMV3,Bondarko2016,SSV,L}.

The reader familiar with the theory of tilting modules will have recognised some well-known features. Every tilting module  of projective dimension at most one gives rise to a triple formed by a torsion pair with a left adjacent cotorsion pair, and cotilting modules  of injective dimension at most one have the dual property. Moreover, the triples corresponding to tilting modules are Òof finite typeÓ, that is, they are determined by a set of finitely presented modules of projective dimension at most one, and cotilting modules are pure-injective.

In view of these analogies, it is not too surprising that classification problems for silting or cosilting objects can be approached following the same strategy adopted for modules. Indeed, over any ring $R$, there is a duality mapping silting complexes of right modules to cosilting complexes of left modules which are Òof cofinite typeÓ, in the sense that the corresponding TTF triple is determined by a set of compact objects.
In general, not all cosilting complexes are obtained in this way. But if the ring $R$ is commutative noetherian, one can exploit known classification results for compactly generated t-structures to prove that all cosilting complexes are of cofinite type, and to obtain a parametrisation of silting complexes and cosilting complexes by certain descending sequences of subsets of the prime spectrum of $R$. Notice, however, that this classification covers only bounded complexes, while the unbounded derived category of $R$ contains also silting or cosilting objects that are unbounded (Example~\ref{unbounded}).

Further classification results are obtained for silting-discrete algebras, which are characterised by the property that all silting complexes are compact, up to equivalence. This class of algebras includes  discrete-derived algebras of finite global dimension \cite{BPP2}, preprojective algebras of Dynkin type \cite{AM}, and certain symmetric algebras \cite{Adachi, Eisele, AAChan}.

\smallskip

The paper is organised as follows. After some preliminaries in Section 2, we devote Section 3 to the notion of a torsion pair in a triangulated category. In particular, we collect some results on the existence of approximations in triangulated categories which can be used to produce t-structures and co-t-structures. In Section 4 we introduce the notion of silting. We discuss and compare silting subcategories in arbitrary triangulated categories in the sense of \cite{AI} and silting objects in triangulated categories with coproducts in the sense of \cite{PV,NSZ}, and we explain the connections with torsion pairs. Section 5 deals with the case of a derived module category. We focus on a special class of silting objects: (bounded) silting complexes. We present bijections with t-structures and co-t-structures. Moreover, we devote some attention to the special case of two-term silting complexes and the related silting and support $\tau$-tilting modules. We also review some results on endomorphism rings of silting complexes and on generalisations of the Tilting Theorem to this setting. 
 Secton 6 starts with a brief reminder on the notion of purity in triangulated categories and then treats the dual notions of a cosilting object and a cosilting complex. The classification results mentioned above are presented in Section 7.

\smallskip

%The structure of the paper is as follows. In Section 2, we present our setup and provide the reader with some preliminaries on torsion pairs and (co)silting objects. In Section 3, we briefly recall the key concepts of pure-projectivity and pure-injectivity and we establish the connection between (co)silting objects having such properties and t-structures with  Grothendieck hearts. Finally, in Section 4, we discuss definable subcategories and we prove the above mentioned relation between pure-injective cosilting objects and certain definable subcategories of the underlying triangulated category.

\medskip
\noindent {\bf Acknowledgements.}  
 {The author acknowledges funding from Istituto Nazionale di Alta Matematica INdAM-GNSAGA  and from the Project ``Ricerca di Base 2015'' of the University of Verona.}

%%%%%%%%%%%%%%%%%%%%%%%%%%%%%%%%%%%%%%%%%%%%%%%%%%%%%%%%%%%%%%%%%%%%%%%%%%%%%%%%%%%%%%

\section{Preliminaries}
\subsection{Notation}\label{notation} Throughout, we denote by $\T$ a triangulated category with shift functor $[1]$. All subcategories considered in this note are strict and full. 

Given a triangle $X\rightarrow Y\rightarrow Z\rightarrow X[1]$ in $\T$, the  object
 $Y$  is said to be an \emph{extension} of $X$ and $Z$. If $\Xcal$ and $\Zcal$ are two classes of objects in $\T$, we denote by $\Xcal\ast\Zcal$ the class of all objects $Y$ that fit in a triangle as above with $X\in\Xcal$ and $Z\in\Zcal$.
 For a set of integers $I$ (which is often expressed by symbols such as $>n$, $<n$, $\geq n$, $\leq n$, $\neq n$, or just $n$, with the obvious associated meaning) 
 %and a class of objects $\Xcal$ in $\T$ 
 we define the following orthogonal classes
$${}^{\perp_I}\Xcal:=\{Y\in \T:\Hom_\T(Y,X[i])=0, \text{ for all } X\in\Xcal \text{ and } i\in I\}$$ $${\Xcal}^{\perp_I}:=\{Y\in \T:\Hom_\T(X,Y[i])=0, \text{ for all } X\in\Xcal \text{ and }  i\in I\}.$$
Furthermore, 
 we  denote by $\add(\Xcal)$ the smallest subcategory of $\T$ containing $\Xcal$ and closed under  finite coproducts and summands.
If $\T$ has coproducts (respectively, products), we denote by $\Add(\Xcal)$ (respectively, $\Prod(\Xcal)$)
  the smallest subcategory of $\T$ containing $\Xcal$ and closed under coproducts (respectively,  products) and summands. 
 If $\Xcal$ consists of a single object $M$, we write ${}^{\perp_I}M$, $M^{\perp_I}$,
 $\add(M)$,  $\Add(M)$, and $\Prod(M)$.

Recall that  an additive functor from $\Tcal$ to an abelian category $\Acal$ is said to be \emph{cohomological} if it takes triangles in $\T$ to long exact sequences in $\Acal$. Moreover, a \emph{triangulated subcategory} of $\T$ is a subcategory which is closed under extensions and  shifts; if it is also closed under direct summands, then it is called 
\emph{thick}. Given a class  of objects $\Pcal$ in $\Tcal$, we denote by $\mathsf{thick}(\Pcal)$ the smallest thick subcategory of $\T$ containing $\Pcal$. Finally, we  say that a class of objects $\Pcal$  \emph{generates}  $\T$  if $\Pcal^{\perp_{\mathbb Z}}=0$, and when  $\Pcal$ consists of a single object $M$, we call $M$ a \emph{generator} of $\T$. Of course, $\Pcal$ generates $\T$ whenever $\mathsf{thick}(\Pcal)=\T$.

%%%%%%%%%%%%%
\subsection{Triangulated categories with coproducts}
Let now $\T$ be  a triangulated category with arbitrary (set-indexed) coproducts.
Then $\T$  is \emph{idempotent complete}, i.e.~
if $X$ is an object of $\T$ with an  endomorphism $e : X \to X$ such that $e^2 = e$, then $e$  factors  through an object  $Y$ in $\T$ via morphisms $f:X\to Y$ and $g:Y\to X$ such that $gf = e$ and $fg = 1_Y$
 (see \cite[Proposition 1.6.8]{Neeman}).
 
Moreover, given a  sequence of morphisms
$X_0 \xrightarrow{f_0} X_1 \xrightarrow{f_1} X_2 \xrightarrow{f_2} \cdots$
  in $\Tcal$,  we can form the \emph{Milnor colimit} $\Mcolim_{n \geq 0}X_n$, which  is defined as the cone of the morphism
$\coprod_{n \geq 0} X_n \xrightarrow{1-f} \coprod_{n \geq 0}X_n$
given by $f=\coprod_{n \geq 0}f_n$.
 
 \smallskip
 
 Recall that an object $X\in\T$ is said to be \emph{compact} if the functor $\Hom_T(X,-)$ commutes with coproducts. If the subcategory of compact objects, denoted by $\T^c$, is skeletally small
%, i.e.~its isomorphism classes form a set, 
and generates $\T$, then  $\T$ is said  to be 
\emph{compactly generated}.   It is  well-known (see \cite[Proposition 8.4.6 and Theorem 8.3.3]{Neeman}) that $\T$ then  also admits products.

 %Milnor limits are defined dually.Directly from the definition, one can check the quasi-isomorphism $$\RHom_R(\Mcolim_{n \geq 0}X_n,Y) \simeq \Mlim_{n \geq 0} \RHom_R(X_n,Y)$$ for any $Y \in \Der(R)$.Milnor limit and colimit allows to reconstruct any complex $X$ from its brutal truncations, that is, there are isomorphisms in $\Der(R)$:$$X \simeq \Mcolim_{n \geq 0} \sigma^{>-n} X \text{   and   } X \simeq \Mlim_{n \geq 0} \sigma^{<n} X,$$ where the maps in the towers are the natural ones.

\subsection{Derived categories.}
Given an abelian category $\Acal$, we denote by $\mathsf{D}(\Acal)$ its unbounded derived category.
For a ring $A$, we denote by $\Modr A$   the category of all right $A$-modules and write $\mathsf{D}(A)=\mathsf{D}(\Modr A)$. The subcategories of injective, of projective, and of finitely generated projective $A$-modules are denoted, respectively, by $\mathrm{Inj}(A)$, by $\mathrm{Proj}(A)$, and  $\mathrm{proj}(A)$. Their bounded homotopy categories are denoted by $\mathsf{K}^b(\mathrm{Inj(A)})$, by $\mathsf{K}^b(\mathrm{Proj}(A))$, and $\mathsf{K}^b(\mathrm{proj}(A))$, respectively. It is well known that $\mathsf{K}^b(\mathrm{proj}(A))=\mathsf{D}(A)^c$ and $\mathsf{D}(A)$ is compactly generated.
The  category of all finitely presented right $A$-modules is denoted by  $\modr A$, and  $\mathsf{D}^b(\modr A)$ is its bounded derived category.

Finally, a triangulated category is said to be \emph{algebraic} if it can be constructed as the stable category of a Frobenius exact category. Note that algebraic and compactly generated triangulated categories are essentially derived categories of small differential graded categories, see \cite{Keller}.

%%%%%%%%%%%%%%%%%%%%%%%%%%%%

\section{Torsion pairs}

Torsion pairs have their origin in localisation theory and are widely used in algebra, geometry, and topology. A torsion pair  (or torsion theory) provides a decomposition of a category in smaller parts that are still big enough to allow for reconstruction of the whole category.

Silting theory is intimately related with the theory of  torsion pairs. In this section, we review the relevant notions and collect some useful existence results. We then focus on TTF triples, that is, triples of classes formed by two adjacent torsion pairs. We will see in the next sections that silting and cosilting objects correspond bijectively to certain TTF triples in the ambient triangulated category.

 \subsection{Basic terminology} 
A pair of subcategories    $(\Xcal,\Ycal)$ in an abelian category $\Acal$ is a {\it torsion pair} if
 $\Hom_A(\Xcal,\Ycal)=0$,
and every object in $\Acal$ is an extension of an object in    $\Xcal$ by an object in $\Ycal$.
Then $\Xcal$ is called the {\it torsion class} and $\Ycal$  the {\it torsion-free class} of the torsion pair. Torsion classes are always closed under extensions and quotients, torsion-free classes are closed under extensions and subobjects. A torsion pair   $(\Xcal,\Ycal)$ is \emph{hereditary} if the torsion class $\Xcal$ is also closed under subobjects.

\smallskip

Let us now consider the notion of a torsion pair in a triangulated category $\T$  and specialise it to the notions of a t-structure  and a co-t-structure introduced in \cite{BBD} and \cite{Bondarko2010,Pauk}, respectively.  
\begin{definition}\label{def tp}
A pair of subcategories $\mathfrak t=(\Ucal,\Vcal)$ in $\T$ is said to be a \emph{torsion pair} if
\begin{enumerate}
\item $\Ucal$ and $\Vcal$ are closed under summands;
\item $\Hom_\T(\Ucal,\Vcal)=0$;
\item  $\T=\Ucal\ast\Vcal$.
\end{enumerate}
The class $\Ucal$ is then called the \emph{aisle}, the class $\Vcal$ the \emph{coaisle} of $\mathfrak t$, and the torsion pair $\mathfrak t$ is said to be
\begin{itemize}
\item a \emph{t-structure} if $\Ucal[1]\subseteq \Ucal$, in which case we say that $\Hcal_{\mathfrak t}=\Ucal\cap \Vcal[1]$ is the \emph{heart} of $(\Ucal,\Vcal)$;
\item a \emph{co-t-structure} if $\Ucal[-1]\subseteq \Ucal$, in which case we say that $\Ucal[1]\cap \Vcal$ is the \emph{coheart} of $(\Ucal,\Vcal)$;
\item $\emph{nondegenerate}$ if $\bigcap_{n\in\mathbb{Z}}\Ucal[n]=0=\bigcap_{n\in\mathbb{Z}}\Vcal[n]$;
\item $\emph{bounded}$ if $\bigcup_{n\in\Zbb}\Ucal[n]=\T=\bigcup_{n\in\Zbb}\Vcal[n]$, in which case it is also nondegenerate;
\item \emph{generated by a subcategory $\Scal$} of $\T$ if $(\Ucal,\Vcal)=({}^{\perp_0}(\Scal^{\perp_0}),\Scal^{\perp_0})$; 
%\item \emph{cogenerated by $\Scal$} if $(\Ucal,\Vcal)=({}^{\perp_0}\Scal,({}^{\perp_0}\Scal)^{\perp_0})$;
\item \emph{compactly generated} if it is generated  by a subcategory of  $\T^c$.
\end{itemize}
\end{definition}

\begin{remark} Beware that the use of terminology and notation may vary in the literature.
For example,   torsion pairs as above are  termed ``complete Hom-orthogonal pairs'' in \cite{StPo}, while  torsion pairs according to \cite[Definition I.2.1]{BR} are in fact t-structures. Co-t-structures are called weight structures in \cite{Bondarko2010}.
Moreover, both t-structures and   co-t-structures are often written as a pair $(\Ucal, \Vcal)$ with a non-empty intersection (the heart or the coheart, respectively), differing  by a shift from the notation above.
\end{remark}

Torsion pairs give rise to approximations. Indeed, condition (3) in Definition~\ref{def tp} states that every object $X\in \T$ admits  a triangle 
\begin{equation}\label{apprtriangle}
\xymatrix{U\ar[r]^{\ f} & X\ar[r]^{\ \ g \ \ \ }& V\ar[r]& U[1]}
\end{equation}
with $U\in\Ucal$ and $V\in\Vcal$. Combined with condition (2), we see that the morphism $f$ is then a $\Ucal$-\emph{precover} (or a \emph{right $\Ucal$-approximation}) of $X$, i.~e.~every morphism $f':U'\rightarrow X$ with $U'\in\Ucal$ factors through $f$. Dually, $g$ is  a $\Vcal$-\emph{preenvelope}  (or a \emph{left $\Vcal$-approximation}) of $X$, i.~e.~every morphism $g':X\rightarrow V'$ with $V'\in\Vcal$ factors through $g$.

Indeed, a stronger property  is verified when $\mathfrak{t}=(\Ucal,\Vcal)$ is a t-structure. In this case, the condition $\Ucal[1]\subseteq \Ucal$  implies
that $g'$ factors uniquely through $g$, and $f'$ factors uniquely through $f$.
In other words, the approximation triangle (\ref{apprtriangle}) can  be expressed functorially as 
\begin{equation}\label{apprtriangle2}
\xymatrix{u(X)\ar[r]^{\ f} & X\ar[r]^{g \ \ \ }& v(X)\ar[r]& u(X)[1]}\end{equation}
where  $u:\T\longrightarrow \Ucal$ is the right adjoint of the inclusion of $\Ucal$ in $\Tcal$ and $v:\T\longrightarrow \Vcal$ is the left adjoint of the inclusion of $\Vcal$ in $\Tcal$. The existence of one of these adjoints, usually called \emph{truncation functors}, is in fact equivalent to the fact that $(\Ucal,\Vcal)$ is a t-structure, cf.~\cite[Proposition 1.1]{KV}. Observe that the maps $f$ and $g$ in the triangle are, respectively, the counit and unit map of the relevant adjunction.
In particular, it follows that if $f=0$ (respectively, $g=0$), then $u(X)=0$ (respectively, $v(X)=0$).

Moreover, when $\mathfrak{t}=(\Ucal,\Vcal)$ is a t-structure,
we know from \cite{BBD} that the heart $\Hcal_\mathfrak{t}$  is an abelian category with the exact structure induced by the triangles of $\T$ lying in $\Hcal_\mathfrak{t}$.
Then $u$ and $v$  can be used to construct   a cohomological functor $H^0_\mathfrak{t}\colon \T\longrightarrow \mathcal{H}_\mathfrak{t}$  defined by
\[
{H}^0_{\mathfrak{t}}(X):
%=v(u(X))[1]
=u(v(X)[1]). 
\]
%Co-t-structures in general do not have abelian cohearts \cite{Bondarko2016} nor functorial triangles.
If $(\Ucal,\Vcal)$ is a nondegenerate t-structure, then ${H}^0_{\mathfrak{t}}$ detects the aisle and the coaisle:  $\Ucal$ consists of the objects $X$ with ${H}^0_{\mathfrak{t}}(X[n])=0$ for all $n>0$, and  $\Vcal$ of those with ${H}^0_{\mathfrak{t}}(X[n])=0$ for all $n\le 0$.

\medskip

\begin{examples}\label{standard}
Let $A$ be a ring and $\Tcal=\mathsf D(A)$. 

(1) 
For $n\in\mathbb Z$ let 
\[
\mathsf D^{\le n}:=\{X\in \mathsf D(A)\mid H^i(X)=0\ \forall \ i>n \}
\]
\[
\mathsf D^{> n}:=\{X\in \mathsf D(A)\mid H^i(X)=0\ \forall \ i\le n \} 
\]
 denote the class of all complexes with cohomologies concentrated in degrees $\le n$ and $>n$, respectively. 
Then the pair $(\mathsf D^{\le n},\mathsf D^{>n})$ is a  $t$-structure in $\T$. It  is nondegenerate, but not bounded.
 In case $n=0$, it is called the 
\emph{standard $t$-structure}, and its heart equals $\Modr A$. 

\smallskip

(2) We review a construction due to Happel, Reiten and Smal\o~\cite{HRS}. Given a torsion  pair     $(\Xcal,\Ycal)$ in the module category $\Modr A$, we set 
\[\Vcal:=\{X\in \Dcal^{\le 0}\mid H^0(X)\in\Xcal\}\]
\[\Wcal:=\{X\in\Dcal^{\ge 0}\mid H^0(X)\in \Ycal\}\]
Then 
$(\Vcal, \Wcal)$ 
is a 
$t$-structure in $\T$, called the {\em HRS-tilt} of the torsion pair $(\Xcal,\Ycal)$.
\end{examples}
 
Assume that $\T$ is a triangulated category with (co)products. Then hearts of t-structures also have (co)products: the (co)product of a family of objects in the heart is obtained by applying the functor $H^0_\mathfrak{t}$ to the corresponding (co)product of the same family in $\T$, see \cite{PaSa}. This (co)product, however,   may differ from the (co)product formed in $\T$, and the cohomological functor associated to the t-structure need not commute with  (co)products in $\T$. 
A good behaviour is ensured by the following conditions.
\begin{definition} A t-structure $\mathfrak{t}=(\Ucal,\Vcal)$ in a  triangulated category with coproducts is said to be \emph{smashing}  if the coaisle $\Vcal$ is closed under coproducts. 
 \emph{Cosmashing} t-structures are defined dually.
\end{definition}
By \cite[Lemma 3.3]{AMV3},  a nondegenerate t-structure
 $\mathfrak{t}$ in $\T$ is smashing (respectively, cosmashing) if and only if
 % with heart $\Hcal_\mathfrak{t}$ and associated cohomological functor $H^0_\mathfrak{t}:\T\rightarrow \Hcal_\mathfrak{t}$. 
  the functor $H^0_\mathfrak{t}$ preserves $\Tcal$-coproducts (respectively, $\Tcal$-products).
  
\smallskip

We will need some further terminology. 

 \begin{definition} A subcategory $\Ucal$ of $\T$ is said to be \emph{suspended} (respectively, \emph{cosuspended}) if it is closed under extensions and positive (respectively, negative) shifts. 
 \end{definition}
 
 For example, a torsion pair $(\Ucal,\Vcal)$ is a t-structure if and only if $\Ucal$ is suspended (or, equivalently, $\Vcal$ is cosuspended). A dual statement holds true for co-t-structures.

\begin{lemma}
%(``Eilenberg's swindle'', cf.~\cite[Footnote on p.227]{AJS2})
\label{swindle}
%(1) \cite[Lemma 3.1]{AJS} If  $X$ is an object in $\T$, then $\mathsf{Susp}(X)^{\perp_0} = X^{\perp_{\le 0}}$.
%(2)
Let  $\T$ be a triangulated category with coproducts. A  suspended subcategory $\Xcal$  of $\Tcal$ is closed under direct summands provided that the countable coproduct $X^{(\mathbb N)}$ of any object $X$ in $\Xcal$   belongs to $\Xcal$ as well. 
\end{lemma}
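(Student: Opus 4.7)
The plan is to run an Eilenberg--style swindle using the Milnor colimit construction recalled in Section~2.2. Note that $\Tcal$ is idempotent complete because it has coproducts, so any direct summand of an object of $\Xcal$ can be realised as the image of a genuine idempotent endomorphism.

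Suppose $Y$ is a direct summand of some $X\in\Xcal$, and write $X\cong Y\oplus Z$. Let $e\colon X\to X$ be the idempotent with $e^2=e$ projecting $X$ onto $Y$. The idea is to exhibit $Y$ as the Milnor colimit of the constant sequence
\[
X\xrightarrow{e} X\xrightarrow{e} X\xrightarrow{e}\cdots
\]
By definition this Milnor colimit $M$ fits in a triangle
\[
\coprod_{n\ge 0}X \xrightarrow{\,1-f\,} \coprod_{n\ge 0}X \longrightarrow M \longrightarrow \Bigl(\coprod_{n\ge 0}X\Bigr)[1],
\]
with $f=\coprod_{n\ge 0}e$. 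Since $\Xcal$ is closed under countable coproducts, $\coprod_{n\ge 0}X=X^{(\Nbb)}$ belongs to $\Xcal$.

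The key step is the identification $M\cong Y$. Using the splitting $X=Y\oplus Z$, the whole tower decomposes as a direct sum of the constant identity tower $Y\xrightarrow{1}Y\xrightarrow{1}\cdots$ and the zero tower $Z\xrightarrow{0}Z\xrightarrow{0}\cdots$. The Milnor colimit of a constant identity tower on an object $W$ is $W$ itself (a standard computation), while that of a zero tower is the cone of the identity on $W^{(\Nbb)}$, hence zero. Since cones commute with finite direct sums, so do Milnor colimits, giving $M\cong Y\oplus 0\cong Y$.

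Once this is in hand, the defining triangle reads
\[
X^{(\Nbb)} \longrightarrow X^{(\Nbb)} \longrightarrow Y \longrightarrow X^{(\Nbb)}[1],
\]
and rotating it exhibits $Y$ as an extension of $X^{(\Nbb)}$ by $X^{(\Nbb)}[1]$. Since $X^{(\Nbb)}\in\Xcal$ by hypothesis and $\Xcal$ is suspended (i.e.\ closed under positive shifts and extensions), we get $X^{(\Nbb)}[1]\in\Xcal$ and therefore $Y\in\Xcal$, as required.

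The main obstacle is the identification $M\cong Y$: it is elementary but needs a careful verification that Milnor colimits are compatible with finite direct sums and that the two building-block cases compute to $W$ and $0$ respectively. These are the standard ingredients behind Neeman's proof that idempotents split in a triangulated category with countable coproducts, and ultimately reduce to routine manipulations of the defining cone triangle.
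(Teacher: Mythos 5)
Your proof is correct and takes essentially the same route as the paper: realise the summand $Y$ as the Milnor colimit of the constant tower $X\xrightarrow{e}X\xrightarrow{e}\cdots$, so that $Y$ appears as the cone of a map $X^{(\mathbb N)}\to X^{(\mathbb N)}$, and then rotate and use that $\Xcal$ is suspended and contains $X^{(\mathbb N)}$. The only difference is that the paper simply cites the relevant step of Neeman's proof of idempotent completeness, whereas you spell out the direct-sum decomposition of the tower; both are fine.
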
 
\begin{proof}
%(2) 
If an object $X$ in $\Xcal$ has a decomposition $X=Y\oplus Z$, the direct summand $Y$ can be viewed as  Milnor colimit of a sequence of the form $X\xrightarrow{e} X \xrightarrow{e} X\xrightarrow{e}\cdots$, cf.~\cite[proof of Proposition 1.6.8]{Neeman}. Hence $Y$ is the 
cone of a morphism
$ X^{(\mathbb N)}\rightarrow X^{(\mathbb N)}$, which entails by assumption that $Y$ lies in $\Xcal$. 
\end{proof}

Given a set of objects $\Xcal$ in $\Tcal$, we denote by  $\mathsf{Susp}(\Xcal)$  the smallest suspended subcategory of $\Tcal$ containing $\Xcal$ and closed under all existing coproducts. By the Lemma above, in a triangulated category with coproducts, $\mathsf{Susp}(\Xcal)$ is automatically closed under summands.

\subsection{Some approximation theory}
In order to construct torsion pairs, we will need some results on the existence of precovers and preenvelopes. Recall that  an $\Xcal$-\emph{cover} of an object $M$ in $\Tcal$ is an  $\Xcal$-precover $g:X\to M$ which  is right minimal, i.e.~every endomorphism $s\in\End_\T(X)$ satisfying $gs=g$ is an automorphism. We will say that a subcategory $\Xcal$ of $\Tcal$ is \emph{(pre)covering}  if every object of $\T$ admits an $\Xcal$-(pre)cover.  Envelopes and (pre)enveloping classes are defined dually.

First of all, observe that every object $T$ in a triangulated category with coproducts $\T$ gives rise to a precovering  subcategory $\Add(T)$. Indeed, for any object $X$ in $\T$ we obtain an $\Add(T)$-precover by taking the \emph{universal morphism} $f:T^{(I)}\rightarrow X$, where $I=\Hom_\T(T,X)$ and $f$ is the codiagonal map given by all morphisms from $T$ to $X$. Dually, every object $T$ in a triangulated category with products gives rise to a preenveloping  subcategory $\Prod(T)$.

\smallskip

Subcategories of $\T$ which are even covering are often aisles of torsion pairs, thanks to the following  triangulated analog of a result for abelian categories known as Wakamatsu's Lemma, see also \cite[Proposition 2.3]{IYo}.

\begin{proposition}  \cite[Lemma 2.1]{J}
Let $\T$ be a triangulated category, and let $\Vcal$ be a subcategory  of $\T$  which  is closed under extensions and direct summands. 
Then the cone of every $\Vcal$-cover belongs to $\Vcal ^{\perp_0}$.
\end{proposition}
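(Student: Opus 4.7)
The plan is to run a Wakamatsu-type argument: given a $\Vcal$-cover $g\colon V\to X$, I complete it to a triangle $V\xrightarrow{g} X\xrightarrow{p} C\xrightarrow{\partial} V[1]$ and show that for an arbitrary morphism $h\colon V'\to C$ with $V'\in\Vcal$, the minimality of $g$ forces $h=0$.

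First I would bring $h$ into contact with the cover triangle by forming the triangle built from the composite $\partial h\colon V'\to V[1]$, namely a triangle $V\to W\to V'\xrightarrow{\partial h} V[1]$, and applying (TR3) to obtain a morphism of triangles
\[
\xymatrix{V\ar[r]\ar@{=}[d] & W\ar[r]\ar[d]^{w} & V'\ar[r]^{\partial h}\ar[d]^{h} & V[1]\ar@{=}[d]\\ V\ar[r]^{g} & X\ar[r]^{p} & C\ar[r]^{\partial} & V[1]\,.}
\]
Since $\Vcal$ is closed under extensions and both $V$ and $V'$ lie in $\Vcal$, the object $W$ belongs to $\Vcal$. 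Because $g$ is a $\Vcal$-precover, the middle vertical map $w\colon W\to X$ factors as $w=g\circ s$ for some $s\colon W\to V$.

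The next step is to exploit right minimality. Composing the first map $\iota\colon V\to W$ of the top triangle with $s$ yields an endomorphism $e:=s\iota\in\End_\Tcal(V)$, and the commutativity $g=w\iota=g(s\iota)=ge$ combined with the fact that $g$ is right minimal forces $e$ to be an automorphism of $V$. Hence $\iota$ has a left inverse $e^{-1}s$, so the top triangle splits and the connecting morphism $\partial h$ equals zero.

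With $\partial h=0$ in hand, $h$ factors through $p$, say $h=p h'$ for some $h'\colon V'\to X$; invoking the $\Vcal$-precover property of $g$ once more factors $h'$ through $g$, and since $pg=0$ we conclude $h=0$. Thus $\Hom_\Tcal(V',C)=0$ for every $V'\in\Vcal$, i.e.\ $C\in\Vcal^{\perp_0}$. The main subtlety I expect is the first step: setting up the pullback triangle with precisely the right connecting morphism so that the map of triangles exists and the minimality argument on $V$ (rather than on some larger object) can be invoked; once that diagram is in place, the rest is a formal consequence of the precover property and splitting criterion for triangles.
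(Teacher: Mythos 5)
Your proof is correct and is the standard triangulated Wakamatsu-lemma argument: form the triangle on $\partial h$, use extension-closedness to place the middle object in $\Vcal$, factor through the precover, and invoke right minimality to split the auxiliary triangle. The paper does not spell out a proof but merely cites J{\o}rgensen, whose Lemma~2.1 is proved along exactly these lines; the only thing worth noting is that you never actually needed closure under direct summands, so your argument establishes a slightly stronger statement than the one quoted.
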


Next, we present a result by Mendoza, S\'aenz, Santiago, and Souto Salorio which provides a tool for the construction of co-t-structures. It is inspired by an analogous result for module categories due to Auslander and Buchweitz. The idea is to start with a precovering or preenveloping subcategory and to perform  iterated  push-out or pull-back constructions.

We start with the following easy observation which will be useful in the sequel.
\begin{lemma} Let  $\Xcal$ be a class of objects in $\Tcal$, and $Z$ be  an object in $\T$. The following statements are equivalent.
\begin{enumerate}
\item There are a positive integer $n$ and  a finite sequence of triangles 
$$Z_i[-1]\longrightarrow Z_{i+1}\longrightarrow X_i\longrightarrow Z_{i},\quad 0\le i<n,$$
where $Z=Z_0$, and the objects  $X_0,\ldots, X_{n-1}, Z_n$  lie in  $\Xcal$.
\item  $Z$ belongs to the  iterated extension   $ \Xcal\ast\Xcal[1]\ast\ldots\ast\Xcal[n]$ for some $n\ge 0$.
\end{enumerate} 
\end{lemma}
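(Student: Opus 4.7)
The plan is to establish the equivalence by induction on $n$, using only two tools: rotation of triangles and the octahedral-axiom-based associativity of the operation $\ast$ (which is standard, even if not proved explicitly earlier in the paper).

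For (1) $\Rightarrow$ (2), I would do a reverse induction along the sequence, starting from $Z_n \in \Xcal$ and climbing back up to $Z_0 = Z$. Each given triangle
\[
Z_i[-1] \longrightarrow Z_{i+1} \longrightarrow X_i \longrightarrow Z_i
\]
is rotated to $X_i \to Z_i \to Z_{i+1}[1] \to X_i[1]$, which places $Z_i$ in $X_i \ast Z_{i+1}[1] \subseteq \Xcal \ast Z_{i+1}[1]$. Assuming inductively that $Z_{i+1} \in \Xcal \ast \Xcal[1] \ast \cdots \ast \Xcal[n-i-1]$, shifting yields $Z_{i+1}[1] \in \Xcal[1]\ast \cdots \ast \Xcal[n-i]$, and combining gives $Z_i \in \Xcal \ast \Xcal[1]\ast \cdots \ast \Xcal[n-i]$. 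Setting $i = 0$ is the desired conclusion.

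For (2) $\Rightarrow$ (1), I would induct on $n$. The base $n=0$ is trivial: $Z \in \Xcal$, and we use the empty sequence (or the identity triangle). For the inductive step, by associativity of $\ast$ we may read $Z \in \Xcal \ast \bigl(\Xcal \ast \Xcal[1] \ast \cdots \ast \Xcal[n-1]\bigr)[1]$, so there is a triangle $X_0 \to Z \to W[1] \to X_0[1]$ with $X_0 \in \Xcal$ and $W \in \Xcal \ast \Xcal[1]\ast \cdots \ast \Xcal[n-1]$. Rotating this triangle produces $Z[-1] \to W \to X_0 \to Z$, which is precisely the first triangle in the desired sequence with $Z_0 = Z$, $Z_1 = W$. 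Applying the inductive hypothesis to $W$ gives a sequence of length $n-1$ starting from $W$, whose concatenation with the first triangle yields the required sequence of length $n$ for $Z$, with terminal object $Z_n \in \Xcal$.

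The only subtlety, and the step I expect to be the main obstacle to spell out carefully, is the tacit use of associativity of $\ast$, i.e.\ the identity $(\Xcal \ast \Ycal)\ast \Zcal = \Xcal \ast (\Ycal \ast \Zcal)$, which is needed both to make sense of the iterated extension notation and to split off the first factor in the inductive step. This follows from the octahedral axiom by a standard diagram chase, and I would cite it (or sketch it briefly) rather than rederive it. Everything else is a matter of bookkeeping with rotations and shifts.
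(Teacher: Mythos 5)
Your proof is correct. The paper gives no proof (the lemma is introduced merely as an ``easy observation''), and your argument --- reverse induction via rotation for $(1)\Rightarrow(2)$, and peeling off the leftmost factor via associativity of $\ast$ plus a rotation for $(2)\Rightarrow(1)$ --- is precisely the natural argument to supply; you are right that associativity of $\ast$, which follows from the octahedral axiom by a routine diagram chase, is the only genuine ingredient. The one wrinkle is the base case of $(2)\Rightarrow(1)$: if $Z\in\Xcal$ but $0\notin\Xcal$, your ``identity triangle'' $Z[-1]\to 0\to Z\xrightarrow{\,1\,} Z$ is not available and condition (1), which demands a \emph{positive} $n$, can actually fail, since $\Xcal$ need not be contained in $\Xcal\ast\Xcal[1]$ for an arbitrary class; this is a blemish in the lemma's phrasing rather than in your argument, and it is harmless in the paper's intended application (Theorem~\ref{mombo}), where $\Xcal$ is cosuspended and closed under summands and therefore contains the zero object.
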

Under the conditions of the Lemma above,  the object $Z$ is said to be \emph{finitely resolved by objects of} $\Xcal$.  
The objects in  iterated extensions of the form $\Xcal[-n]\ast\ldots\ast\Xcal[-1]\ast\Xcal$ have a dual characterisation and are said to be \emph{finitely coresolved by objects of} $\Xcal$.

\begin{theorem}
\cite[Proposition 2.10]{MSSS1}
\cite[Theorem 3.11]{MSSS}\label{mombo}
Let $\T$ be a triangulated category, and let   $(\Xcal,\omega)$ be a pair of classes of objects in $\T$ which are closed under direct summands, and such that
\begin{enumerate}
\item[(i)] $\Xcal$  is a cosuspended subcategory of $\Tcal$,
\item[(ii)] $\omega\subset\Xcal\cap\Xcal^{\perp_{>0}}$,
\item[(iii)] for every  $X\in \Xcal$ there is a triangle $X\longrightarrow C\longrightarrow X'\longrightarrow X[1]$ with $C\in\omega$ and $X'\in\Xcal$.
\end{enumerate}
Then the class 
$\widehat{\Xcal}$ of all objects of $\T$ which are finitely resolved by objects of $\Xcal$  coincides with $\mathsf{thick}(\Xcal)$, and it admits  a co-t-structure $(\Xcal[-1],\widehat{\omega})$ with coheart $\omega$.
\end{theorem}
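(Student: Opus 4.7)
The plan is to verify the torsion-pair axioms for $(\Xcal[-1], \widehat{\omega})$ inside $\widehat{\Xcal}$, check the aisle--coaisle shift conditions, identify the coheart with $\omega$, and finally show $\widehat{\Xcal} = \mathsf{thick}(\Xcal)$. The centrepiece is the construction of approximation triangles $U \to Z \to V \to U[1]$ with $U \in \Xcal[-1]$ and $V \in \widehat{\omega}$ for every $Z \in \widehat{\Xcal}$, which I would obtain by induction on the length $n$ of a finite $\Xcal$-resolution.

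The base case $n=0$ is immediate: for $X \in \Xcal$, hypothesis (iii) furnishes a triangle $X \to C \to X' \to X[1]$ with $C \in \omega$ and $X' \in \Xcal$, and a rotation yields the desired triangle $X'[-1] \to X \to C \to X'$ with $X'[-1] \in \Xcal[-1]$ and $C \in \omega \subseteq \widehat{\omega}$. For the inductive step, the preceding lemma splits any length-$n$ object $Z$ through a triangle $X_0 \to Z \to Z_1[1] \to X_0[1]$ with $X_0 \in \Xcal$ and $Z_1$ of length $n-1$; combining the base case applied to $X_0$ with the inductive hypothesis applied to $Z_1$ via the octahedral axiom produces a triangle whose end-terms land in $\Xcal[-1]$ and in an iterated extension of $\omega, \omega[1], \ldots, \omega[n]$ respectively, hence in $\widehat{\omega}$.

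The orthogonality $\Hom_\T(\Xcal[-1], \widehat{\omega}) = 0$ is a direct consequence of (ii): for $X \in \Xcal$, $W \in \omega$ and $k \ge 0$, one has $\Hom_\T(X[-1], W[k]) = \Hom_\T(X, W[k+1]) = 0$, and this extends to all of $\widehat{\omega}$ by d\'evissage along the defining iterated extensions. Closure of $\Xcal[-1]$ under summands is given by hypothesis; closure of $\widehat{\omega}$ under summands follows from a standard argument based on the essential uniqueness of approximation triangles. Since $\Xcal$ is cosuspended, $(\Xcal[-1])[-1] \subseteq \Xcal[-1]$, so the torsion pair is indeed a co-t-structure. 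For the coheart $\Xcal \cap \widehat{\omega}$: given $Y \in \Xcal$ with $Y \in \omega \ast \omega[1] \ast \cdots \ast \omega[n]$ and $n \ge 1$, I would split off a triangle $W_0 \to Y \to Y'[1] \to W_0[1]$ with $W_0 \in \omega$ and $Y' \in \widehat{\omega}$; the orthogonality just established forces the map $Y \to Y'[1]$ to vanish, the triangle splits, and $Y$ becomes a summand of $W_0 \in \omega$, hence $Y \in \omega$ by summand-closure of $\omega$.

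Finally, one inclusion of $\widehat{\Xcal} = \mathsf{thick}(\Xcal)$ is obvious. For the other, I would verify that $\widehat{\Xcal}$ is thick: extension-closure comes from associativity of $\ast$ together with the cosuspended hypothesis $\Xcal \subseteq \Xcal[k]$ and the extension-closure of each $\Xcal[k]$; closure under negative shifts is automatic; closure under positive shifts follows from the containment $\Xcal[k] \subseteq \Xcal \ast \omega[1] \ast \cdots \ast \omega[k] \subseteq \widehat{\Xcal}$, proved by induction on $k$ using (iii) via a rotation of the distinguished triangle it provides; and closure under summands is a consequence of the co-t-structure just constructed. The principal obstacle I anticipate is the bookkeeping in the inductive octahedral step, where one must simultaneously control both the aisle term in $\Xcal[-1]$ and the coaisle term in $\widehat{\omega}$, each built from distinct approximation data, and verify that the resulting $V$ really sits in the correct iterated extension of shifts of $\omega$ rather than in some more unwieldy class.
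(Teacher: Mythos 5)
Your overall strategy is the right one and most of the individual steps go through, but there is one genuine gap, and there are a couple of places where the argument can be streamlined.

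\textbf{Simplification for $\widehat{\Xcal}=\mathsf{thick}(\Xcal)$.} Since $\Xcal$ is cosuspended (extension-closed and $\Xcal[-1]\subseteq\Xcal$), one has $\Xcal[k]\subseteq\Xcal[n]$ for all $k\le n$, and each $\Xcal[n]$ is extension-closed; hence $\Xcal\ast\Xcal[1]\ast\cdots\ast\Xcal[n]=\Xcal[n]$ (the reverse inclusion uses $0\in\Xcal$). Thus $\widehat{\Xcal}=\bigcup_{n\ge0}\Xcal[n]$, which is visibly closed under shifts, extensions \emph{and} summands, so $\widehat{\Xcal}=\mathsf{thick}(\Xcal)$ falls out directly. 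In particular, your claim that summand-closure of $\widehat{\Xcal}$ ``is a consequence of the co-t-structure just constructed'' is circular (the torsion-pair axioms in $\widehat{\Xcal}$ presuppose summand-closure), but it isn't needed. Similarly, the approximation-triangle step you describe as bookkeeping-heavy can be stated cleanly as the inclusion
$\Xcal\ast\Xcal[1]\ast\cdots\ast\Xcal[n]\subseteq\Xcal[-1]\ast\omega\ast\omega[1]\ast\cdots\ast\omega[n]$,
proved by induction from the single fact $\Xcal\subseteq\Xcal[-1]\ast\omega$ (your rotation of the triangle in (iii)) together with extension-closure of $\Xcal$. Your octahedral step is correct, but this formulation makes the control of the coaisle term transparent.

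\textbf{The gap: summand-closure of $\widehat{\omega}$.} You dismiss this with ``a standard argument based on the essential uniqueness of approximation triangles.'' That is where the proof breaks. For a co-t-structure the truncation triangle $U\to Z\to V\to U[1]$ is \emph{not} functorial and \emph{not} essentially unique: if $Z$ also lies in $\widehat{\omega}$, then $0\to Z\to Z$ is another valid decomposition, while summing the decompositions of $Z_1$ and $Z_2$ for $Z=Z_1\oplus Z_2$ produces $U_1\oplus U_2\to Z\to V_1\oplus V_2$ with $V_i\cong Z_i\oplus U_i[1]$, and there is no cancellation argument returning $Z_i\in\widehat{\omega}$. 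In fact, establishing that the iterated extension classes $\omega\ast\omega[1]\ast\cdots\ast\omega[n]$ are closed under direct summands is exactly the nontrivial heart of this theorem; the paper itself flags this, pointing out that Iyama--Yang's alternative proof hinges precisely on showing such classes are summand-closed for a presilting subcategory. The relevant input here is that $\omega\subseteq\Xcal\cap\Xcal^{\perp_{>0}}$ forces $\omega\subseteq\omega^{\perp_{>0}}$, so $\omega$ is presilting, and one must then run the (genuinely nontrivial) argument of \cite[Proposition 2.8]{IY} or, equivalently, carry out the iterated push-out construction of \cite{MSSS,MSSS1} carefully enough to see that the coaisle one obtains is summand-closed. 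Without that input your torsion-pair axiom (1) for the coaisle is unverified, so the co-t-structure is not established. Your arguments for the orthogonality, for the coheart equalling $\omega$, and for the base and inductive steps of the approximation triangle are all sound once this hole is filled.
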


Theorem~\ref{mombo} yields approximations for objects which are finitely resolved by a class of distinguished objects. The next  approach we are going to present  uses  Milnor colimits rather than  iterated push-outs in order to  approximate arbitrary objects. But  to ensure a good behaviour with respect to Milnor colimits one still needs a finiteness condition, namely, one requires the  torsion pair to be compactly generated.

\begin{theorem}\label{AItorsionpair} Let $\T$ be a triangulated category with coproducts, and $\Scal$  a set of objects  in $\Tcal^c$. 

(1) \cite[Theorem 4.3]{AI} There is a torsion  pair $({}^{\perp_0}(\Scal^\perp),\Scal^{\perp_0})$ in $\Tcal$.

(2) \cite[Theorem A.1 and Lemma 3.1]{AJS},\cite[Theorem A.7]{KN},\cite[III, Proposition 2.8]{BR} There is a t-structure 
$(\mathsf{Susp}(\Scal),\Scal^{\perp_{\le 0}})$ in $\Tcal$. If  $\Scal\subset\Scal^{\perp_{>0}}$, then  $\mathsf{Susp}(\Scal)\subset\Scal^{\perp_{>0}}$, with equality if and only if $\Scal$ generates $\Tcal$.
\end{theorem}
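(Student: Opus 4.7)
Both parts follow from the same iterative Milnor-colimit construction. Fix $X\in\Tcal$, set $X_0:=X$, and define $X_{k+1}$ inductively by the triangle
\[
S_k\xrightarrow{f_k}X_k\xrightarrow{g_k}X_{k+1}\to S_k[1],
\]
where $S_k:=\coprod_{(S,n,\alpha)}S[n]$ runs over all triples with $S\in\Scal$ and $\alpha\in\Hom_\Tcal(S[n],X_k)$, $f_k$ is the canonical codiagonal, and the shift $n$ is restricted to $\{0\}$ in part~(1) and to $\Zbb_{\ge 0}$ in part~(2). Put $V:=\Mcolim_k X_k$ and complete $X\to V$ to a triangle $U\to X\to V\to U[1]$. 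Compactness of each $S[n]$ gives $\Hom_\Tcal(S[n],V)\cong\mathrm{colim}_k\Hom_\Tcal(S[n],X_k)$; every representative factors as a component of some $f_k$ and is killed by $g_k$, so the colimit vanishes, yielding $V\in\Scal^{\perp_0}$ in~(1) and $V\in\Scal^{\perp_{\le 0}}$ in~(2). For the aisle, an octahedral argument on $X\to X_k\to X_{k+1}$ produces triangles $F_k\to F_{k+1}\to S_k\to F_k[1]$ for the fibers $F_k:=\mathrm{fib}(X\to X_k)$, so each $F_k$ is built inductively from coproducts of the admissible shifts of $\Scal$ by extensions; comparing Milnor colimits of the towers then identifies $U$ with $\Mcolim_k F_k$.

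In part~(2), $\mathsf{Susp}(\Scal)$ is closed under coproducts, positive shifts, and extensions, hence under Milnor colimits, so $U\in\mathsf{Susp}(\Scal)$; orthogonality of the pair is immediate from the definition of $\Scal^{\perp_{\le 0}}$, and summand closure of $\mathsf{Susp}(\Scal)$ is provided by Lemma~\ref{swindle}. In part~(1), the class ${}^{\perp_0}(\Scal^{\perp_0})$ is neither suspended nor cosuspended, so the inclusion $U\in{}^{\perp_0}(\Scal^{\perp_0})$ must be argued directly by applying $\Hom_\Tcal(-,Y)$ with $Y\in\Scal^{\perp_0}$ to the Milnor-colimit defining triangle and using the step-by-step vanishing $\Hom_\Tcal(S_k,Y)=0$ together with the explicit form of the transition maps to deduce $\Hom_\Tcal(U,Y)=0$.

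The ``moreover'' clause follows formally. If $\Scal\subseteq\Scal^{\perp_{>0}}$, then $\Scal^{\perp_{>0}}$ contains all $\Scal[n]$ with $n\ge 0$ and is closed under coproducts and extensions, hence $\mathsf{Susp}(\Scal)\subseteq\Scal^{\perp_{>0}}$. If furthermore $\Scal$ generates $\Tcal$, then for any $Z\in\Scal^{\perp_{>0}}$ the t-structure triangle $U\to Z\to V\to U[1]$ has $U\in\mathsf{Susp}(\Scal)\subseteq\Scal^{\perp_{>0}}$, so the long exact sequence forces $V\in\Scal^{\perp_{>0}}\cap\Scal^{\perp_{\le 0}}=\Scal^{\perp_{\mathbb Z}}=0$, whence $Z\simeq U\in\mathsf{Susp}(\Scal)$. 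Conversely, if $\mathsf{Susp}(\Scal)=\Scal^{\perp_{>0}}$, then $\Scal^{\perp_{\mathbb Z}}\subseteq\mathsf{Susp}(\Scal)\cap\Scal^{\perp_{\le 0}}=0$ since the aisle and coaisle of a t-structure meet trivially, so $\Scal$ generates $\Tcal$. The main obstacle throughout is the identification of $U$ inside the aisle in part~(1), since that class is not preserved by the Milnor-colimit operations in the way $\mathsf{Susp}(\Scal)$ is, and one must exploit the compactness of $\Scal$ together with the fact that only the shift $n=0$ appears in the $S_k$ to rule out potential Ext${}^1$ contributions.
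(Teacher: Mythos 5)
Your plan — iterate universal morphisms, set $V=\Mcolim_k X_k$ and $U=\Mcolim_k F_k$, then check each side lands in the correct orthogonal class — is precisely the construction used in the sources the paper cites for this result, and your verifications of part (2) and of the ``moreover'' clause are correct. The one genuine gap is the last step of part (1), which you yourself flag as ``the main obstacle'' but then leave at the level of a gesture (``rule out potential $\Ext^1$ contributions''). Applying $\Hom_\Tcal(-,Y)$ with $Y\in\Scal^{\perp_0}$ to the triangle $\coprod_k F_k\to\coprod_k F_k\to U\to\coprod_k F_k[1]$ defining $U$ gives a short exact sequence
\[
0\longrightarrow {\lim}^1_k\,\Hom_\Tcal(F_k[1],Y)\longrightarrow \Hom_\Tcal(U,Y)\longrightarrow \lim_k\,\Hom_\Tcal(F_k,Y)\longrightarrow 0.
\]
Your induction kills the right-hand limit, since $\Hom_\Tcal(F_k,Y)=0$, but it says nothing about the ${\lim}^1$-term, and the groups $\Hom_\Tcal(F_k[1],Y)=\Hom_\Tcal(F_k,Y[-1])$ have no reason to vanish. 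The missing idea is a Mittag--Leffler argument: rotating $F_k\to F_{k+1}\to S_k\to F_k[1]$ once and applying $\Hom_\Tcal(-,Y)$ yields
\[
\Hom_\Tcal(F_{k+1}[1],Y)\longrightarrow \Hom_\Tcal(F_k[1],Y)\longrightarrow \Hom_\Tcal(S_k,Y)=0,
\]
so every transition map of the inverse system $\bigl(\Hom_\Tcal(F_k[1],Y)\bigr)_{k\ge 0}$ is surjective, and its ${\lim}^1$ vanishes. What is being exploited here is exactly that each $S_k$ lies in $\Add(\Scal)$ with shift $n=0$ only, so that $\Hom_\Tcal(S_k,Y)=0$; compactness of $\Scal$, which you invoke for this step, plays no role in it — it was already spent on the colimit computation $\Hom_\Tcal(S,V)\cong\mathrm{colim}_k\Hom_\Tcal(S,X_k)=0$ showing $V\in\Scal^{\perp_0}$. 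So your diagnosis of where the difficulty sits is right, but the resolution you point to is the wrong tool; the explicit Mittag--Leffler surjectivity is the argument that actually closes the gap.
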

%If $\T$ is moreover an algebraic triangulated category, then such a pair admits a right adjacent torsion pair, as shown in \cite[Theorem 3.11]{StPo}. In this case, if $\Acal$ is a suspended (respectively, cosuspended) subcategory of $\T^c$, then the triple $({}^{\perp_0}(\Acal^{\perp_0}),\Acal^{\perp_0},(\Acal^{\perp_0})^{\perp_0})$ is a cosuspended (respectively, suspended) TTF triple. We investigate some properties of the heart of compactly generated cosuspended TTF triples in Section 4.

The assumption ``compact'' in Theorem~\ref{AItorsionpair} can be removed when $\T$ admits an enhancement allowing to exploit a small-object argument in Quillen's sense. Using the theory of cotorsion pairs in exact categories, it is shown in  \cite[Corollary 3.5]{SS} that every set of objects $\Scal$
in the stable category of an efficient Frobenius category generates a torsion  pair $({}^{\perp_0}(\Scal^\perp),\Scal^{\perp_0})$. We will need  the following special case.

\begin{theorem}\label{AJS}
 \cite[Proposition 3.4]{AJS}
Let $\Acal$ be a Grothendieck category. Every object $X$ in $\mathsf D(\Acal)$ gives rise to a t-structure $(\mathsf{Susp}(X), X^{\perp_{\le 0}})$ in $\mathsf D(\Acal)$. 
%\smallskip
%(2) \cite[Corollary 3.5]{SS}\label{Sa-St}Let $\Tcal$ be the stable category of an efficient Frobenius category (for example, the unbounded derived category of an abelian category with coproducts). Then every set of objects $\Scal$ in $\Tcal$ generates a torsion  pair $({}^{\perp_0}(\Scal^\perp),\Scal^{\perp_0})$ in $\Tcal$. 
%In particular, if $\Xcal_0$ is closed under $[-1]$, then the pair $({}^{\perp}(\Xcal_0^\perp),\Xcal_0^\perp)$ is a co-t-structure in $\Tcal$.
\end{theorem}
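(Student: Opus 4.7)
The claim is that $(\Ucal,\Vcal):=(\mathsf{Susp}(X),X^{\perp_{\le 0}})$ is a t-structure in $\mathsf{D}(\Acal)$. Most of the axioms of Definition~\ref{def tp} come essentially for free: by construction $\Ucal$ is suspended and closed under coproducts, hence closed under summands by Lemma~\ref{swindle}; $\Vcal$ is trivially closed under summands and under negative shifts. The Hom-vanishing $\Hom(\Ucal,\Vcal)=0$ follows because the class $\{U\in\mathsf{D}(\Acal):\Hom(U,V)=0\text{ for all }V\in\Vcal\}$ is suspended, closed under coproducts, and contains $X$ (since $V\in\Vcal$ forces $\Hom(X,V)=0$ by taking $i=0$ in the definition of $X^{\perp_{\le 0}}$); hence it contains all of $\mathsf{Susp}(X)$. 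Thus the only real content is condition~(3): for every $Y$ one must produce a triangle $U\to Y\to V\to U[1]$ with $U\in\Ucal$ and $V\in\Vcal$.

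The plan is a transfinite small-object argument aimed at successively annihilating all morphisms from the shifts $X[j]$, $j\ge 0$, into $Y$. Set $Y_0=Y$, and at a successor ordinal define $Y_{\alpha+1}$ as the cone of the codiagonal (universal) morphism
\[
\varphi_\alpha\colon \bigoplus_{j\ge 0} X[j]^{(\Hom(X[j],Y_\alpha))}\longrightarrow Y_\alpha,
\]
so that every map $X[j]\to Y_\alpha$ with $j\ge 0$ becomes zero along $Y_\alpha\to Y_{\alpha+1}$. At a limit ordinal $\lambda$, take $Y_\lambda$ to be the transfinite homotopy colimit of the tower $(Y_\alpha)_{\alpha<\lambda}$. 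For sufficiently large regular $\lambda$, one sets $V:=Y_\lambda$ and lets $U$ be the fibre of the composed map $Y\to V$. The object $U$ is then assembled by transfinite extensions from shifted coproducts $\bigoplus X[j]^{(I_\alpha)}[1]$, each of which lies in $\mathsf{Susp}(X)$; closure of $\mathsf{Susp}(X)$ under extensions, coproducts, and the relevant transfinite homotopy colimits yields $U\in\Ucal$.

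The real obstacle — and the point where the Grothendieck hypothesis on $\Acal$ is essential — is the stabilisation step: one must produce $\lambda$ with $\Hom(X[j],Y_\lambda)=0$ for every $j\ge 0$. This requires realising the tower $(Y_\alpha)$ concretely enough that $\Hom(X[j],-)$ commutes with its transfinite colimit. The standard remedy is to pass to a model or DG-enhancement of $\mathsf{D}(\Acal)$: for a Grothendieck category the injective model structure on complexes is available, so the tower can be represented by a transfinite composition of genuine cofibrations between fibrant replacements, and exactness of filtered colimits in $\Acal$ (axiom AB5) ensures the homotopy colimit is computed by the naive colimit of complexes. Choosing $\lambda$ strictly larger than the cardinality of a generating set of a chosen resolution of $X$, a standard small-object computation shows every $X[j]\to Y_\lambda$ factors through some $Y_\alpha$ with $\alpha<\lambda$ and is therefore annihilated by construction at the next step. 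This is exactly the Frobenius-efficient-category instance of \cite[Corollary~3.5]{SS} invoked in the paragraph preceding the theorem.
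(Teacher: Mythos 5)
Your outline is essentially correct and follows exactly the route the paper indicates: the paper states the result with only a citation to \cite[Proposition 3.4]{AJS}, and the paragraph immediately preceding the theorem explains it as the special case of the small-object/cotorsion-pair argument from \cite[Corollary 3.5]{SS} applied to the Frobenius-exact model underlying $\mathsf{D}(\Acal)$ — precisely the transfinite attachment-of-cones construction you describe. One cosmetic slip worth fixing: the fibres $U_\alpha$ are built up by the octahedral triangles $U_\alpha\to U_{\alpha+1}\to\bigoplus_{j\ge 0}X[j]^{(I_\alpha)}\to U_\alpha[1]$, so the building blocks are the unshifted coproducts $\bigoplus_{j\ge 0}X[j]^{(I_\alpha)}$ rather than their shifts by $[1]$; since both lie in $\mathsf{Susp}(X)$ this does not affect the conclusion $U\in\Ucal$.
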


Neeman has recently extended Theorem~\ref{AJS} in \cite{Nee18} to the class of well-generated triangulated categories, a generalisation of compactly generated triangulated categories which encompasses categories admitting a nice model, such as derived categories of Grothendieck categories. His proof doesn't require the existence of a model. It uses a
  different ingredient  to produce  adjoint functors: 
Brown representability.  

\begin{definition}
A triangulated category $\T$ with coproducts  satisfies  \emph{Brown representability} if every contravariant cohomological functor $H:\T\rightarrow \Ab$ from $\T$ to the category of abelian groups $\Ab$ which takes coproducts in $\T$ to products in $\Ab$ is representable. A triangulated category $\T$ with products satisfies \emph{Brown representability for the dual} if every covariant cohomological functor $H:\T\rightarrow \Ab$ which takes products in $\T$ to products in $\Ab$ is representable.
\end{definition}
 
If $\T$ is compactly generated,  then it satisfies both Brown representability and Brown representability for the dual - see \cite{Kr3} for details. 

\smallskip

Here is the key result connecting Brown representability with the existence of adjoint functors.

\begin{theorem}\cite[Theorem 8.4.4.]{Neeman}\label{Brown} Let $\Scal$ and $\Tcal$ be triangulated categories. Assume that $\Scal$ satisfies Brown representability, and let $F: \Scal\to \Tcal$ be a triangulated functor which respects coproducts. Then $F$ has a right adjoint functor.
\end{theorem}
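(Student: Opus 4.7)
The plan is to construct the right adjoint $G\colon \Tcal\to\Scal$ object by object, representing the natural functors $\Hom_\Tcal(F(-),T)$ via Brown representability, and then upgrading pointwise representability to a functor through the Yoneda lemma.

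First, fix an object $T\in\Tcal$ and consider the contravariant functor
\[
H_T\colon \Scal^{op}\longrightarrow \Ab,\qquad X\longmapsto \Hom_\Tcal(F(X),T).
\]
I would check that $H_T$ is cohomological: since $F$ is triangulated, it sends triangles in $\Scal$ to triangles in $\Tcal$, and then $\Hom_\Tcal(-,T)$ turns these into long exact sequences in $\Ab$. Similarly, since $F$ respects coproducts and $\Hom_\Tcal(-,T)$ turns coproducts into products, $H_T$ takes coproducts in $\Scal$ to products in $\Ab$. By Brown representability applied to $\Scal$, there is an object $G(T)\in\Scal$ and a natural isomorphism
\[
\eta_{X,T}\colon \Hom_\Scal(X,G(T))\;\xrightarrow{\ \cong\ }\;\Hom_\Tcal(F(X),T)
\]
in the variable $X$.

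Next I would extend $T\mapsto G(T)$ to a functor. Given $f\colon T\to T'$ in $\Tcal$, post-composition with $f$ yields a natural transformation $H_T\Rightarrow H_{T'}$, hence via the isomorphisms $\eta_{-,T}$ and $\eta_{-,T'}$ a natural transformation $\Hom_\Scal(-,G(T))\Rightarrow\Hom_\Scal(-,G(T'))$. By the Yoneda lemma this comes from a unique morphism $G(f)\colon G(T)\to G(T')$. The uniqueness in Yoneda makes functoriality (preservation of identities and composition) automatic, and the naturality of $\eta$ in $X$ for varying $T$ becomes naturality in both variables. This gives the desired adjunction isomorphism
\[
\Hom_\Tcal(F(X),T)\;\cong\;\Hom_\Scal(X,G(T))
\]
natural in $X\in\Scal$ and $T\in\Tcal$, which is exactly the statement that $G$ is right adjoint to $F$.

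Conceptually the argument is short, so the main obstacle is essentially bookkeeping: verifying that the witnessing object $G(T)$ produced by Brown representability can genuinely be chosen so that the assignment $T\mapsto G(T)$ is functorial (this is where Yoneda is crucial), and confirming that the cohomological and coproduct-preservation hypotheses on $H_T$ really hold in our setup, which ultimately reduces to $F$ being triangulated and coproduct-preserving. No further structure on $\Tcal$ is needed, which is the strength of the Brown representability approach over model-theoretic arguments.
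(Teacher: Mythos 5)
Your proof is correct and is precisely the standard argument; the paper does not prove this result itself but simply cites Neeman's book, where the proof proceeds exactly as you describe --- pointwise representation of the cohomological, coproduct-to-product functor $\Hom_\Tcal(F(-),T)$ via Brown representability, then the Yoneda lemma to assemble the representing objects into a right adjoint functor.
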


The next result is an application of Theorem~\ref{Brown} which yields the existence of stable t-structures  (also called {semi-orthogonal decompositions}), that is, t-structures where the aisle and the coaisle are even triangulated subcategories of the original category.  Recall that in a triangulated category with coproducts,  a \emph{localising} subcategory is a  triangulated subcategory  which is closed under  coproducts, and which is then automatically  thick by Lemma~\ref{swindle}.  
Given a set of objects $\Xcal$ in $\Tcal$, we denote by $\mathsf{Loc}(\Xcal)$  the smallest  localising subcategory of $\T$ containing $\Xcal$.  
% It explains the terminology ``generator'' introduced in Section~\ref{notation}.
\begin{theorem}\cite[Proposition 3.8]{CGR} \label{CGR} Let $\Tcal$ be a compactly generated (or more generally,  a well-generated) triangulated category. Every object $X$ in $\Tcal$ gives rise to  a  t-structure $(\mathsf{Loc}(X), X^{\perp_{\mathbb Z}})$ in $\T$. 
In particular, an object $X$ is a generator of $\T$ if and only if $\mathsf{Loc}(X)=\T$.
\end{theorem}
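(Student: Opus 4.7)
The plan is to produce the required t-structure by showing that the inclusion $\iota\colon\mathsf{Loc}(X)\hookrightarrow\T$ admits a right adjoint and then reading off the resulting truncation triangles. Before invoking Brown representability, I would first record the identification
$$\mathsf{Loc}(X)^{\perp_0}=X^{\perp_{\mathbb Z}}.$$
The inclusion $\subseteq$ is immediate since $X[n]\in\mathsf{Loc}(X)$ for every $n\in\mathbb Z$. For $\supseteq$, given $Y\in X^{\perp_{\mathbb Z}}$, the class $\{Z\in\T:\Hom_\T(Z[n],Y)=0\text{ for all }n\in\mathbb Z\}$ is triangulated, closed under arbitrary coproducts (hence localising by Lemma~\ref{swindle}), and contains $X$; so it contains $\mathsf{Loc}(X)$.

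The crucial step is the existence of a right adjoint for $\iota$. Since $\T$ is well-generated and $\mathsf{Loc}(X)$ is a localising subcategory generated by a set (a singleton), a theorem of Neeman ensures that $\mathsf{Loc}(X)$ is itself well-generated and therefore satisfies Brown representability. As $\iota$ is a triangulated functor preserving coproducts, Theorem~\ref{Brown} supplies a right adjoint $\rho\colon\T\longrightarrow\mathsf{Loc}(X)$. For each $Y\in\T$ the counit of the adjunction sits in a triangle
$$\iota\rho(Y)\longrightarrow Y\longrightarrow C_Y\longrightarrow\iota\rho(Y)[1].$$
Applying $\Hom_\T(Z,-)$ for any $Z\in\mathsf{Loc}(X)$ and using that the adjunction forces $\Hom_\T(Z,\iota\rho(Y))\cong\Hom_\T(Z,Y)$ (and similarly after shifting $Z$), one sees that $\Hom_\T(Z,C_Y)=0$, so $C_Y\in\mathsf{Loc}(X)^{\perp_0}=X^{\perp_{\mathbb Z}}$.

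This gives the decomposition $\T=\mathsf{Loc}(X)\ast X^{\perp_{\mathbb Z}}$ required in Definition~\ref{def tp}(3); the Hom-vanishing condition (2) is exactly the identification recorded above, and closure under summands holds for $X^{\perp_{\mathbb Z}}$ trivially and for $\mathsf{Loc}(X)$ by Lemma~\ref{swindle}. Since $\mathsf{Loc}(X)$ is closed under all shifts, this is in fact a stable t-structure. The final assertion follows at once: $X$ generates $\T$ if and only if $X^{\perp_{\mathbb Z}}=0$, if and only if the coaisle of this t-structure vanishes, which is equivalent to $\mathsf{Loc}(X)=\T$.

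The principal obstacle is the invocation of Neeman's theorem that a localising subcategory of a well-generated triangulated category which is itself generated by a set is well-generated; without this input one cannot guarantee Brown representability for $\mathsf{Loc}(X)$, and hence cannot produce the right adjoint $\rho$ on which the whole argument hinges. In the compactly generated case this is somewhat easier, since $\mathsf{Loc}(X)$ is then well-generated by a rather direct cardinality argument, but in the well-generated setting the statement genuinely requires Neeman's machinery.
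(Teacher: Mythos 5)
Your proof is correct and follows exactly the route the paper indicates: the result is presented there as ``an application of Theorem~\ref{Brown}'' (Brown representability giving adjoints), and you carry this out by observing that $\mathsf{Loc}(X)$ is well-generated (by Neeman's theorem on set-generated localising subcategories of well-generated categories), hence satisfies Brown representability, so the inclusion has a right adjoint whose counit triangle supplies the decomposition $\T=\mathsf{Loc}(X)\ast\mathsf{Loc}(X)^{\perp_0}$, together with the identification $\mathsf{Loc}(X)^{\perp_0}=X^{\perp_{\mathbb Z}}$. The final equivalence also matches the paper's definition that $X$ generates $\T$ precisely when $X^{\perp_{\mathbb Z}}=0$.
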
 

\subsection{Adjacent torsion pairs} We now turn to triples of classes formed by two torsion pairs.
\begin{definition}
Given two torsion pairs of the form $(\Ucal,\Vcal)$ and $(\Vcal,\Wcal)$, we say that $(\Ucal,\Vcal)$ is \emph{left adjacent} to $(\Vcal,\Wcal)$ and that $(\Vcal,\Wcal)$ is \emph{right adjacent} to $(\Ucal,\Vcal)$. The triple $(\Ucal,\Vcal,\Wcal)$ is then said to be a \emph{TTF (torsion-torsion-free) triple},
and it is said to be 
\begin{itemize}
\item
\emph{suspended} (respectively, \emph{cosuspended}) if so is the class $\Vcal$.
 %Moreover, a TTF triple $(\Ucal,\Vcal,\Wcal)$  \begin{itemize}\item \emph{suspended} (respectively, \emph{cosuspended}) if so is the class $\Vcal$;
  \item\emph{generated by a set} of objects $\Scal$  if $\Vcal=\Scal^{\perp_0}$;  \item\emph{compactly generated} if it is generated by a set of compact objects.
 \end{itemize}
\end{definition}
In other words, a  triple $(\Ucal,\Vcal,\Wcal)$ is suspended if and only if $(\Ucal,\Vcal)$ is a co-t-structure  and $(\Vcal,\Wcal)$ is a t-structure, and the corresponding result  holds true for  {cosuspended} TTF triples.
A suspended TTF triple $(\Ucal,\Vcal,\Wcal)$  will be called \emph{nondegenerate} if so is the t-structure $(\Vcal,\Wcal)$. A cosuspended TTF triple $(\Ucal,\Vcal,\Wcal)$  will be called \emph{nondegenerate} if so is the t-structure $(\Ucal,\Vcal)$.

\smallskip

In presence of an adjacent co-t-structure, nondegeneracy can be rephrased as follows (cf.~\cite[Lemma 4.6]{AMV3} for the dual statement).

\begin{lemma}\label{generatingcoheart} 
A suspended TTF triple $(\Ucal,\Vcal,\Wcal)$ in $\T$ is  nondegenerate if and only if the coheart $\Ccal = \Ucal[1] \cap\Vcal$ generates $\T$. In this case, we have $\Vcal =\Ccal^{\perp_{>0}}$.
\end{lemma}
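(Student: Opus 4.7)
The plan is to establish all three assertions---both directions of the equivalence and the identity $\Vcal=\Ccal^{\perp_{>0}}$---in three coordinated steps.

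I would first record the unconditional inclusion $\Vcal\subseteq\Ccal^{\perp_{>0}}$: for $V\in\Vcal$, $C\in\Ccal=\Ucal[1]\cap\Vcal$ and $n>0$, the shift adjunction gives $\Hom_\Tcal(C,V[n])=\Hom_\Tcal(C[-1],V[n-1])=0$ because $C[-1]\in\Ucal$ and $V[n-1]\in\Vcal$ (using $\Vcal[1]\subseteq\Vcal$), while $(\Ucal,\Vcal)$ is a torsion pair. This at once disposes of the easy implication: if $\Ccal$ generates $\Tcal$ and $X\in\bigcap_n\Vcal[n]$, every shift $X[n]$ lies in $\Vcal$ and the same computation gives $X\in\Ccal^{\perp_{\mathbb Z}}=0$; dually, for $X\in\bigcap_n\Wcal[n]$ one has $\Hom_\Tcal(\Ccal,X[n])\subseteq\Hom_\Tcal(\Vcal,\Wcal)=0$, whence $X=0$.

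The core of the argument is an iterated approximation lemma that I would state and prove under the nondegeneracy hypothesis: for every $V'\in\Vcal$ and every $k\ge 1$ there is a triangle $Z_k\to V'\to V_k\to Z_k[1]$ with $V_k\in\Vcal[k]$ and $Z_k\in\Ccal\ast\Ccal[1]\ast\cdots\ast\Ccal[k-1]$. The base case $k=1$ comes from applying the shifted torsion pair $(\Ucal[1],\Vcal[1])$ to $V'\in\Vcal$: its decomposition $U\to V'\to V_1\to U[1]$ has $V_1\in\Vcal[1]\subseteq\Vcal$, and rotating to $V_1[-1]\to U\to V'\to V_1$ exhibits $U$ as an extension of two objects of the extension-closed class $\Vcal$, so $U\in\Ucal[1]\cap\Vcal=\Ccal$. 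For the inductive step, apply the $k=1$ case to $V_k[-k]\in\Vcal$ and paste the resulting triangle onto $Z_k\to V'\to V_k$ using the octahedral axiom.

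Granted the lemma, take $X\in\Ccal^{\perp_{>0}}$ with its t-structure triangle $v\to X\to w\to v[1]$, $v\in\Vcal$, $w\in\Wcal$. I would first verify that $w\in\Ccal^{\perp_{\mathbb Z}}$: positive degrees come from the hypothesis on $X$ and $v\in\Vcal\subseteq\Ccal^{\perp_{>0}}$ via the long exact sequence of $\Hom_\Tcal(C,-)$, while non-positive degrees follow from $C[-n]\in\Vcal$ combined with $w\in\Wcal=\Vcal^{\perp_0}$. By the nondegeneracy condition $\bigcap_k\Wcal[k]=0$ it suffices to show $w[k]\in\Wcal$ for every $k>0$ (negative shifts of $w$ are already in $\Wcal$), equivalently $\Hom_\Tcal(V,w[k])=0$ for all $V\in\Vcal$. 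Applying the lemma with $n=k$ gives $\Hom_\Tcal(V_k,w[k])=\Hom_\Tcal(V_k[-k],w)=0$ since $V_k[-k]\in\Vcal$ and $w\in\Vcal^{\perp_0}$, while $\Hom_\Tcal(Z_k,w[k])=0$ by d\'{e}vissage along the $\Ccal\ast\Ccal[1]\ast\cdots\ast\Ccal[k-1]$-filtration using $w\in\Ccal^{\perp_{\mathbb Z}}$; the long exact sequence then forces $\Hom_\Tcal(V,w[k])=0$. Hence $w=0$, so $X=v\in\Vcal$, proving $\Vcal=\Ccal^{\perp_{>0}}$. Generation is then immediate: any $X\in\Ccal^{\perp_{\mathbb Z}}$ has every shift in $\Ccal^{\perp_{>0}}=\Vcal$, so $X\in\bigcap_n\Vcal[n]=0$.

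I expect the approximation lemma to be the main obstacle: since the co-t-structure $(\Ucal,\Vcal)$ is assumed neither bounded nor nondegenerate, the coheart does not a priori filter $\Vcal$, and the filtration has to be constructed by successively invoking the shifted torsion pairs $(\Ucal[k],\Vcal[k])$ and exploiting the fact that the extension-closed class $\Vcal$ reabsorbs the first component of each decomposition into the appropriate shift of the coheart.
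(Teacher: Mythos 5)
Your proof is correct, and it follows what is, as far as I can tell, essentially the route taken in the cited reference \cite[Lemma 4.6]{AMV3}: the unconditional inclusion $\Vcal\subseteq\Ccal^{\perp_{>0}}$, the iterated filtration of any $V'\in\Vcal$ by coheart objects in degrees $0,\dots,k-1$ with defect in $\Vcal[k]$ (obtained by repeatedly applying the shifted co-t-structure truncation and using extension-closure of $\Vcal$ to put the approximation piece into $\Ccal$), and then nondegeneracy of $(\Vcal,\Wcal)$ to kill the truncation $w$ of an arbitrary $X\in\Ccal^{\perp_{>0}}$ by d\'evissage. One small presentational remark: your iterated approximation lemma (``$Z_k\to V'\to V_k\to Z_k[1]$ with $V_k\in\Vcal[k]$ and $Z_k\in\Ccal\ast\Ccal[1]\ast\cdots\ast\Ccal[k-1]$'') holds for any suspended TTF triple, without the nondegeneracy hypothesis you announce; nondegeneracy enters only later, to conclude $\bigcap_k\Wcal[k]=0$ and hence $w=0$.
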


 %The following theorem combines  \cite[Proposition 1.4]{Nee} with In particular,
 % from \cite[II, 2.3]{BR}. \blue{better Joergensen's version Lemma 2.1 in  "Auslander-Reiten triangles in subcategories" J. K-Theory 3 (2009), 583Ð601}
The next result gives criteria for the existence of adjacent torsion pairs.

\begin{theorem} \cite[Proposition 1.4]{Nee} \label{N} 
Let $\T$ be a triangulated category with coproducts, and let $\Vcal$ be a subcategory  of $\T$ which is  suspended (respectively, cosuspended). Then $\Vcal$ is precovering (respectively, preenveloping)
if and only if the inclusion of $\Vcal$ in $\T$ has a  right (respectively, left) adjoint. 
\end{theorem}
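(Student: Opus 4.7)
I would establish the equivalence in the suspended case; the cosuspended case follows by dualising (passing to $\T^{\mathrm{op}}$ and exchanging left and right adjoints). The reverse implication is formal: if $r : \T \to \Vcal$ is a right adjoint to the inclusion $i$, then for every $X \in \T$ the counit $\epsilon_X : ir(X) \to X$ is a $\Vcal$-precover, because the adjunction bijection $\Hom_\Vcal(V', r(X)) \simeq \Hom_\T(iV', X)$ sends a morphism $\alpha : V' \to r(X)$ to $\epsilon_X \circ i\alpha$, so every morphism $V' \to X$ factors (in fact uniquely) through $\epsilon_X$.

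The forward direction is the substantive part. Assume $\Vcal$ is suspended and precovering. My plan is to show that the pair $(\Vcal, \Vcal^{\perp_0})$ is a torsion pair: the orthogonality $\Hom_\T(\Vcal, \Vcal^{\perp_0}) = 0$ is automatic and $\Vcal^{\perp_0}$ is evidently closed under summands, so the content is to produce, for every $X \in \T$, an approximation triangle
\[V \longrightarrow X \longrightarrow W \longrightarrow V[1]\]
with $V \in \Vcal$ and $W \in \Vcal^{\perp_0}$. Once this is done, the assumption $\Vcal[1] \subseteq \Vcal$ makes the pair a t-structure in the sense of Definition~\ref{def tp}, and the right adjoint to $i$ is the truncation functor $u$ from (\ref{apprtriangle2}).

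To build this triangle I would proceed as follows. Start from a $\Vcal$-precover $f_0 : V_0 \to X$ and complete to a triangle $V_0 \to X \to W_0 \to V_0[1]$. For each $V' \in \Vcal$ the precover property makes the map $\Hom_\T(V', V_0) \to \Hom_\T(V', X)$ surjective, hence the subsequent map $\Hom_\T(V', X) \to \Hom_\T(V', W_0)$ vanishes and the connecting map embeds $\Hom_\T(V', W_0) \hookrightarrow \Hom_\T(V', V_0[1])$. Any nonzero $\phi : V' \to W_0$ therefore contributes a nonsplit triangle $V_0 \to E \to V' \to V_0[1]$ with $E \in \Vcal$ (by closure of $\Vcal$ under extensions); TR3 lifts this extension to a morphism $E \to X$ compatible with $f_0$, and the octahedral axiom applied to $V_0 \to E \to X$ fits the cone $W_1$ of $E \to X$ into a triangle $V' \to W_0 \to W_1 \to V'[1]$, so $\phi$ is annihilated at the next level because the composition $V' \to W_0 \to W_1$ in that triangle is zero.

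The main obstacle is to organise this absorption across \emph{all} test morphisms and to pass to a limit that still lies in $\Vcal$. My plan is to iterate the construction to produce a tower $V_0 \to V_1 \to V_2 \to \cdots$ in $\Vcal$ with compatible maps to $X$, then form the Milnor colimit $V := \Mcolim V_n$ (available since $\T$ has coproducts). The delicate point, which is the heart of the argument, is showing that $V$ still lies in $\Vcal$ even though $\Vcal$ is not assumed to be closed under arbitrary coproducts, using the closure under extensions and positive shifts together with a swindle in the vein of Lemma~\ref{swindle}; and then verifying by a long-exact-sequence chase that the cone $W$ of the resulting morphism $V \to X$ belongs to $\Vcal^{\perp_0}$. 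Once the torsion pair is in place, the t-structure truncation supplies the desired right adjoint and the proof is complete.
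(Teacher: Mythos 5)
Your backward direction and overall strategy (iterate $\Vcal$-precovers, pass to a Milnor colimit, extract the torsion pair) do match the proof of the cited Proposition 1.4 of Neeman's \emph{Some adjoints in homotopy categories}. But there is a real gap at precisely the point you flag as ``the heart of the argument.''

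The Milnor colimit $V=\Mcolim V_n$ sits in a triangle whose outer terms are countable coproducts $\coprod_n V_n$. To put $V$ into $\Vcal$ you must put these coproducts into $\Vcal$, i.e.\ you need $\Vcal$ to be closed under coproducts. Neeman's Proposition 1.4 explicitly assumes this (his hypotheses on the subcategory are: closed under coproducts, closed under extensions, and $\Scal[1]\subseteq\Scal$); the survey's statement has silently suppressed that clause, presumably because in every application here $\Vcal$ is a definable subcategory and hence coproduct-closed. Your hope that a ``swindle in the vein of Lemma~\ref{swindle}'' rescues the argument does not work: that lemma derives closure under \emph{summands} from closure under countable self-coproducts, which is the wrong direction---it cannot manufacture coproduct closure from extension- and shift-closure alone. (One is tempted to argue that precovering already forces coproduct closure, since $\coprod V_i$ becomes a retract of a precover lying in $\Vcal$; but that in turn needs closure under retracts, which by the swindle again requires coproduct closure, so the argument is circular.) You should simply add coproduct closure to your running hypotheses, as Neeman does.

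Two smaller points. First, your absorption step handles one test morphism $\phi:V'\to W_0$ at a time and then worries about how to ``organise'' the family; the clean fix is to take a single $\Vcal$-precover $S_0\to W_0$ of the cone and form the extension triangle $V_0\to V_1\to S_0\to V_0[1]$ with connecting map the composite $S_0\to W_0\to V_0[1]$; this absorbs all test morphisms at once, avoids any coproduct over morphism sets, and keeps $V_1$ in $\Vcal$ by extension-closure. Second, the final check that the cone of $V\to X$ lies in $\Vcal^{\perp_0}$ is not a routine long-exact-sequence chase: surjectivity of $\Hom(V',V)\to\Hom(V',X)$ is immediate from the stage-zero precover, but the required injectivity of $\Hom(V',V[1])\to\Hom(V',X[1])$ demands a genuine inductive analysis of the tower, exactly the subtlety acknowledged in the paper's own sketch for Proposition~\ref{properties silting} (``one can show that the construction also entails injectivity'').
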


Further results hold true under Brown representabilty.

\begin{theorem}\cite[Theorem 3.1.2 and Corollary 4.3.9]{Bondarko2016}\label{coBondarko}
Let $\Tcal$ be a triangulated category
satisfying Brown representability. 
Every 
compactly generated co-t-structure has a right adjacent t-structure. Every compactly generated t-structure in $\T$  has a right adjacent co-t-structure.
\end{theorem}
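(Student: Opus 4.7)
Both statements reduce to the same structural task: starting from $(\Ucal,\Vcal)$, set $\Wcal:=\Vcal^{\perp_0}$ and verify that $(\Vcal,\Wcal)$ is a torsion pair. The required shift-closure of $\Vcal$ (namely $\Vcal[1]\subseteq\Vcal$ when $(\Ucal,\Vcal)$ is a co-t-structure, and $\Vcal[-1]\subseteq\Vcal$ when $(\Ucal,\Vcal)$ is a t-structure) follows immediately from the corresponding shift-closure of $\Ucal$ via $\Vcal=\Ucal^{\perp_0}$; the Hom-orthogonality $\Hom_\Tcal(\Vcal,\Wcal)=0$ and closure of both classes under direct summands are built into the definitions. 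The only substantive task is therefore, for each $X\in\Tcal$, to construct a torsion triangle
\[
V\longrightarrow X\longrightarrow W\longrightarrow V[1],\qquad V\in\Vcal,\ W\in\Wcal.
\]

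My plan for these triangles is to apply Brown representability (Theorem~\ref{Brown}). Fix $\Scal\subseteq\Tcal^c$ with $\Vcal=\Scal^{\perp_0}$. Compactness of $\Scal$ ensures that $\Vcal$ is closed under arbitrary coproducts, and as a right orthogonal $\Vcal$ is also closed under extensions and products. For a fixed $X$, I would introduce a contravariant additive functor $F_X\colon\Tcal\to\Ab$ that measures ``the part of $\Hom_\Tcal(-,X)$ that survives on the $\Wcal$-side'', and show that $F_X$ sends coproducts in $\Tcal$ to products in $\Ab$. Brown representability then yields $W_X\in\Tcal$ representing $F_X$ together with a universal morphism $X\to W_X$; completing it to a triangle produces the candidate $V_X$. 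A Yoneda-style argument---using compactness of $\Scal$ by applying $\Hom_\Tcal(S,-)$ to the triangle for every $S\in\Scal$---identifies $W_X\in\Vcal^{\perp_0}=\Wcal$ from the construction of $F_X$ and $V_X\in\Scal^{\perp_0}=\Vcal$ from the vanishing of $\Hom_\Tcal(S,V_X)$. Combined with the shift-closure noted above, $(\Vcal,\Wcal)$ is then a t-structure in the first statement and a co-t-structure in the second, producing the claimed TTF triple in either case.

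I expect the main obstacle to be the coproduct-to-product check for $F_X$. The class $\Vcal$ is a proper class, so any naive definition of $F_X$ involves a proper-class-indexed test family and has no chance of being cohomological with the right coproduct behaviour; the real work is to replace this family by a small, $\Scal$-indexed one that still detects membership in $\Vcal^{\perp_0}$. This reduction is exactly where the compact-generation hypothesis is essential: it lets one control $F_X$ via the functors $\Hom_\Tcal(S,-)$ with $S\in\Scal$, which commute with coproducts by compactness. Without it Brown representability could not even be brought to bear---which is why the hypothesis on $\Tcal$ must be stronger than mere existence of coproducts, and why both parts of the theorem require the compact-generation condition rather than just being generated by some set.
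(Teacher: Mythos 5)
This statement is quoted from Bondarko \cite{Bondarko2016}; the paper gives no proof, so there is no in-house argument to compare against. Judged on its own terms, your reduction to constructing a torsion triangle for $(\Vcal,\Vcal^{\perp_0})$ is the right first move: setting $\Wcal:=\Vcal^{\perp_0}$ automatically gives Hom-orthogonality and closure under summands, and the shift-closure needed for ``t-structure'' versus ``co-t-structure'' does fall out of the shift-closure of $\Ucal$ via $\Vcal=\Ucal^{\perp_0}$. You also correctly locate where compactness must enter: $\Scal\subset\Tcal^c$ makes $\Vcal=\Scal^{\perp_0}$ closed under coproducts, which is a necessary precondition for invoking Brown representability.

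The gap is the functor $F_X$. You never define it; you only describe it informally (``the part of $\Hom_\Tcal(-,X)$ that survives on the $\Wcal$-side''), and that description is circular, because deciding what ``survives on the $\Wcal$-side'' already presupposes the truncation triangle you are trying to build. You recognize that the naive, $\Vcal$-indexed construction is not available (proper class, no coproduct control) and state that ``the real work'' is to cut $F_X$ down to an $\Scal$-indexed construction — but that is precisely the content of Bondarko's proof, and it is not a routine step. In particular: it is not enough to know $\Vcal=\Scal^{\perp_0}$; one must show that $\Vcal$-approximations can be built by a (transfinite) small-object/Milnor-colimit procedure driven only by $\Scal$, that the resulting candidate $V_X$ actually lands in $\Scal^{\perp_0}$, and that the cofiber lands in $\Vcal^{\perp_0}$ (the latter is the genuinely delicate orthogonality check, since there is no reason the cofiber is orthogonal to arbitrary objects of $\Vcal$ just because it is orthogonal to the objects you used in the construction). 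Without an explicit, non-circular definition of $F_X$ together with verification that it is cohomological and sends coproducts to products, the appeal to Brown representability is a placeholder rather than a proof. A secondary, minor point: Theorem~\ref{Brown} as stated produces right adjoints to coproduct-preserving triangulated functors between triangulated categories; since $\Vcal$ is only (co)suspended and not triangulated, what you actually need is the raw representability axiom applied to a concretely defined cohomological functor, which again comes back to the missing definition of $F_X$.
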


\begin{theorem}\cite[Theorem 3.2.4 and Corollary 3.2.6]{Bondarko2016}\label{Bondarko}
Let $\Tcal$ be a triangulated category with  coproducts, and let $\mathfrak{t}:=(\Ucal,\Vcal)$ be a  t-structure in $\Tcal$ such that the smallest localising subcategory containing $\Ucal$ satisfies Brown representability for the dual. Then $\mathfrak{t}$ has a left adjacent co-t-structure if and only if $\mathfrak{t}$ is cosmashing and the heart $\Hcal_\mathfrak{t}$ has enough projectives. The dual statement holds true as well.
%(2) Suppose that  the smallest colocalising subcategory containing $\Vcal$ satisfies Brown representability. Then $\mathfrak{t}$ has a right adjacent co-t-structure if and only if $\mathfrak{t}$ is smashing and the heart $\Hcal_\mathfrak{t}$ has enough injectives. \blue{still needed?}
\end{theorem}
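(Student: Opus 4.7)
The plan is to prove the biconditional in two directions, with the dual statement following by formal duality (reversing arrows and exchanging products with coproducts). Write $(\Wcal,\Ucal)$ for the putative left adjacent co-t-structure and $\Ccal:=\Wcal[1]\cap\Ucal$ for its coheart.

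For the forward direction, assume $(\Wcal,\Ucal)$ exists. To establish \emph{cosmashing}, given a family $(U_i)_{i\in I}$ in $\Ucal$ with product $P=\prod_i U_i$, the torsion pair yields a triangle $W\to P\to U\to W[1]$ with $W\in\Wcal$, $U\in\Ucal$. Each composite $W\to P\xrightarrow{\pi_i}U_i$ lies in $\Hom_\T(\Wcal,\Ucal)=0$, so the universal property of $P$ forces $W\to P$ to be zero; the triangle splits and $P$ appears as a direct summand of $U\in\Ucal$, whence $P\in\Ucal$ by closure under summands. For \emph{enough projectives}, one verifies that the coheart $\Ccal$ produces projectives in the heart via $H^0_\mathfrak{t}$: given $C\in\Ccal$ and any short exact sequence in $\Hcal_\mathfrak{t}$ coming from a triangle $A\to B\to D\to A[1]$ with $A\in\Hcal_\mathfrak{t}\subseteq\Ucal$, the obstruction $\Hom_\T(C,A[1])=\Hom_\T(C[-1],A)$ vanishes since $C[-1]\in\Wcal$ and $A\in\Ucal$, so $H^0_\mathfrak{t}(C)$ is projective in $\Hcal_\mathfrak{t}$. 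To cover an arbitrary $H\in\Hcal_\mathfrak{t}$, apply the co-t-structure decomposition to a suitable shift of $H$ and combine it with the t-structure truncation to extract an object of $\Ccal$ whose associated heart-cohomology maps epimorphically onto $H$.

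For the backward direction, assume $\mathfrak{t}$ is cosmashing and $\Hcal_\mathfrak{t}$ has enough projectives, and set $\Wcal:={}^{\perp_0}\Ucal$. Closure of $\Wcal$ under summands and extensions is immediate, while $\Wcal[-1]\subseteq\Wcal$ follows from $\Ucal[1]\subseteq\Ucal$ via $\Hom_\T(W[-1],U)=\Hom_\T(W,U[1])=0$, so $\Wcal$ is cosuspended. The main task is to prove $\T=\Wcal\ast\Ucal$. Construct the required approximation triangle for $X\in\T$ by iteration: choose a projective cover $P_0\twoheadrightarrow H^0_\mathfrak{t}(X)$ in $\Hcal_\mathfrak{t}$ and lift it to a morphism $P_0\to X$ in $\T$ using the adjoint to the inclusion $\mathcal{L}\hookrightarrow\T$ provided by Brown representability for the dual on $\mathcal{L}:=\mathsf{Loc}(\Ucal)$ (via the dual of Theorem~\ref{Brown}); take the cone and iterate on the result; then pass to the homotopy limit of the resulting tower. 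Cosmashing ensures that products of $\Ucal$-objects remain in $\Ucal$, so the perp class is preserved under the relevant limit process and the fibre lies in $\Wcal={}^{\perp_0}\Ucal$, while the cofibre lies in $\Ucal$ by the cohomological analysis at each step.

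The principal obstacle is the backward direction, specifically the convergence of the iterative projective-resolution construction inside $\T$ to a genuine approximation triangle rather than a mere sequence of partial approximations. This is precisely where all three hypotheses interact nontrivially: cosmashing guarantees that $\Ucal$, and hence the behaviour of ${}^{\perp_0}\Ucal$, is compatible with the limits appearing in the construction; enough projectives in $\Hcal_\mathfrak{t}$ ensures that at each stage the remaining heart cohomology can be killed by a projective cover; and Brown representability for the dual on $\mathcal{L}$ produces the functorial adjoint required to lift the projective covers from the heart to morphisms in $\T$. Verifying that the fibre and cofibre land in the correct classes demands careful cohomological bookkeeping with $H^0_\mathfrak{t}$ along the tower, which is the technical heart of Bondarko's argument.
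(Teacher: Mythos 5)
This result is only cited in the paper (to Bondarko, Theorem 3.2.4 and Corollary 3.2.6) with no internal proof, so there is nothing in the paper to compare against; I evaluate your sketch on its own terms.

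Your forward direction is essentially sound. For cosmashing, the splitting argument is correct (assuming the relevant products exist). For projectivity of $H^0_{\mathfrak t}(C)$ you identify the wrong obstruction group: what must vanish is $\Hom_\T(H^0_{\mathfrak t}(C),A[1])$, not $\Hom_\T(C,A[1])$. This is easily repaired by noting that the truncation triangle $u_1(C)\to C\to H^0_{\mathfrak t}(C)\to u_1(C)[1]$, with $u_1(C)\in\Ucal[1]$, gives an injection $\Hom_\T(H^0_{\mathfrak t}(C),A[1])\hookrightarrow\Hom_\T(C,A[1])$ because $\Hom_\T(u_1(C)[1],A[1])\subset\Hom_\T(\Ucal[2],\Vcal[2])=0$, but you should say so. The ``cover an arbitrary $H$'' sentence is too vague: what one actually does is apply the $(\Wcal[1],\Ucal[1])$ decomposition to $H$, obtaining $W\to H\to U$ with $W\in\Wcal[1]$ and $U\in\Ucal[1]$, check that $W$ lies in the coheart and that $H^0_{\mathfrak t}(U)=0$, so that $H^0_{\mathfrak t}(W)\to H$ is an epimorphism.

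The backward direction is where the real gap is, and the construction as described would not go through. First, the tool: you invoke ``the adjoint to the inclusion $\mathcal L\hookrightarrow\T$ provided by Brown representability for the dual.'' The dual of Theorem~\ref{Brown} produces a \emph{left} adjoint for a \emph{product-preserving} triangulated functor out of a category satisfying Brown-dual; but the inclusion $\mathsf{Loc}(\Ucal)\hookrightarrow\T$ is not known to preserve products (localising subcategories are closed under coproducts, not products, and the products $\mathcal L$ possesses internally need not agree with those of $\T$), so this adjoint is not available for free. What Bondarko's hypothesis is actually used for is to \emph{represent}, inside $\mathcal L$, the covariant cohomological functors $\Hom_{\Hcal_{\mathfrak t}}(P,H^0_{\mathfrak t}(-))$ for projective generators $P$ of the heart --- cosmashing is what makes these preserve products --- thereby producing the objects that become the coheart of the desired co-t-structure; the co-t-structure is then built from these representing objects, not from lifted projective covers. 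Second, the tower construction is internally inconsistent: a tower $X\to X_1\to X_2\to\cdots$ of successive cones has a homotopy \emph{colimit}, not a limit, and while each $X_n$ and hence $\hocolim X_n$ stays in $\Ucal$ when $X\in\Ucal$ (aisles are closed under coproducts and extensions), you give no argument that the fibre of $X\to\hocolim X_n$ lands in $\Wcal={}^{\perp_0}\Ucal$; this is exactly the nontrivial content and cannot be waved away by appeal to ``cohomological bookkeeping.'' You also implicitly assume $X\in\Ucal$ to keep the $X_n$ in $\Ucal$, whereas for a co-t-structure one must decompose \emph{arbitrary} $X\in\T$, and the t-structure truncation $u(X)\to X\to v(X)$ does not reduce to the case $X\in\Ucal$, since $v(X)$ need not lie in $\Ucal$. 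Finally, the dual statement is not a formal mirror image here: the hypothesis names a localising subcategory and Brown representability for the dual, and what precisely dualises requires care.
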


%%%%%%%%%%%%%%%%%%%%%%%%%%%%%%%%%%%%%%%%%%%%%%%%%%%%%%%%%%%%%%%%%%%%%%%%%%%%%

\section{Variants of silting} 

We are now ready for introducing silting objects. We will discuss and compare different variants of this notion  and explain the connection with torsion pairs and TTF triples. The crucial feature will be a self-orthogonality condition: a silting object does not have positive self-extensions, that is, it maps trivially to its positive shifts. This is a weaker form of the self-orthogonality required for a tilting object, which has to be \emph{exceptional}, i.e.~it has to map trivially to all its nonzero shifts. 

\subsection{Silting subcategories}
 
We start out with the notion 
%of a silting subcategory 
introduced by Aihara and Iyama.

\begin{definition}\cite[Definition 2.1]{AI} \label{AI}
A  subcategory $\Scal$ of $\T$ is said to be a \emph{presilting subcategory} if $\Scal=\add\Scal\subset\Scal^{\perp_{>0}}$. It is \emph{silting} if, in addition, $\T=\mathsf{thick}(\Scal)$. 
\end{definition}
\begin{example}\label{compex}\cite[Example 2.2 and Proposition 2.20]{AI}
 Any ring $A$, viewed as a stalk complex, defines a silting subcategory in $\mathsf{K}^b(\mathrm{proj}(A))$. Moreover, 
 %if $A$ is a finite dimensional algebra, then 
 every silting subcategory  in $\mathsf{K}^b(\mathrm{proj}(A))$ is of the form  
 $\Scal=\add(T)$ for an object $T$.
 %\{T_k\,\mid\, 1\le k\le n\}$ where $T_1,\ldots,T_n$ are pairwise non-isomorphic indecomposable objects of $\T$, and $n$ is the number of isomorphism classes of indecomposable projective $A$-modules, see \cite[Section 2.3]{AI}.
 \end{example}

The existence of a silting subcategory is a strong requirement on $\T$. It imposes bounds on the distance between  objects with non-trivial morphisms, and it entails that all objects can be constructed from the silting subcategory by a finite number of shifts and extensions.

\begin{proposition}\label{require} \cite[Propositions 2.4 and 2.17, Example 2.5]{AI} Let  $\T$ be a triangulated category with  a silting  subcategory $\Scal$.
\begin{enumerate}
\item  
For any two objects $X,Y\in\T$ we have   $\Hom_\T(X,Y[i])=0$ for $i\gg 0$.

\item Every object in $\Tcal$  occurs as a summand of an object in an iterated extension  of the form $ \Scal[-\ell]\ast\Scal[1-\ell]\ast\ldots\ast\Scal[\ell-1]\ast\Scal[\ell]$ for some ${\ell\ge 0}$.

\item If $\T=\mathsf{D}^b(\modr A)$ for a  finite dimensional algebra $A$ over a field $k$, then  $A$ has finite global dimension.
\end{enumerate}
\end{proposition}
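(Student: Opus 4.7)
My plan is to establish (2) first, derive (1) from it, and then combine (1) with the structure of $\modr A$ to obtain (3).

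For (2), I would define $\Ucal_\ell$ to be the class of direct summands of objects in the iterated extension $\Scal[-\ell]\ast\Scal[1-\ell]\ast\cdots\ast\Scal[\ell]$, and set $\Ucal := \bigcup_{\ell \ge 0} \Ucal_\ell$. Then $\Scal \subseteq \Ucal_0$, and $\Ucal$ is closed under direct summands and under $[\pm 1]$ (shifting sends $\Ucal_\ell$ into $\Ucal_{\ell+1}$). Since $\T = \thick(\Scal)$, it suffices to show $\Ucal$ is closed under cones. Given a triangle $X \to Y \to Z \to X[1]$ with $X \in \Ucal_\ell$ and $Z \in \Ucal_m$, the direct-sum trick---summing with the trivial triangles $X' \xrightarrow{=} X' \to 0$ and $0 \to Z' \xrightarrow{=} Z'$ where $X \oplus X'$ and $Z \oplus Z'$ are full iterated extensions of the appropriate form---produces a triangle $(X \oplus X') \to (Y \oplus X' \oplus Z') \to (Z \oplus Z') \to (X \oplus X')[1]$. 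Thus $Y$ is a summand of an object in the (possibly non-monotonic) iterated extension $\Scal[-\ell]\ast\cdots\ast\Scal[\ell]\ast\Scal[-m]\ast\cdots\ast\Scal[m]$.

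The crux is then to rewrite this non-monotonic iterated extension as one in $\Scal[-N]\ast\cdots\ast\Scal[N]$ with $N = \max(\ell, m)$. The lever is a splitting consequence of presilting: for $a \ge b$ and any triangle $P \to W \to Q \to P[1]$ with $P \in \Scal[a]$ and $Q \in \Scal[b]$, we have $\Hom_\T(Q, P[1]) = \Hom_\T(\Scal, \Scal[a+1-b]) = 0$ since $a+1-b \ge 1$ and $\Scal \subseteq \Scal^{\perp_{>0}}$; hence $W \cong P \oplus Q$, which equally well lies in $\Scal[b]\ast\Scal[a]$ via the trivial extension. Via the octahedral axiom, I would apply this rewriting to any consecutive out-of-order pair of cones inside the filtration, refactoring through a new intermediate object and swapping the order of two adjacent shifts. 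After finitely many adjacent transpositions the shift sequence becomes monotonically increasing inside $[-N, N]$; padding missing shifts by $0 \in \Scal$ places $Y \oplus X' \oplus Z'$ into $\Scal[-N]\ast\cdots\ast\Scal[N]$, hence $Y$ into $\Ucal_N$.

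For (1), having (2), I write $X, Y$ as summands of $X' \in \Scal[-\ell]\ast\cdots\ast\Scal[\ell]$ and $Y' \in \Scal[-m]\ast\cdots\ast\Scal[m]$, then iterate the long exact $\Hom_\T(-, Y'[i])$ sequence along the filtration triangles of $X'$, and then $\Hom_\T(X', -[i])$ along those of $Y'$. The problem reduces to the vanishing of terms $\Hom_\T(\Scal, \Scal[b-a+i])$ for $a \in [-\ell, \ell]$ and $b \in [-m, m]$, which holds for any $i > \ell + m$ by presilting. For (3), take $\T = \mathsf{D}^b(\modr A)$ and recall $\Ext^i_A(M, N) = \Hom_\T(M, N[i])$, so (1) gives $\Ext^i_A(M, N) = 0$ for $i \gg 0$ (with a bound a priori depending on $M, N$). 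Since $A$ is finite dimensional with finitely many simples $S_1, \ldots, S_r$, choose a single $n$ with $\Ext^i_A(S_p, S_q) = 0$ for all $p, q$ and $i > n$; a standard dévissage along composition series (induction on length in the first, then the second slot, using the long exact Ext sequences attached to short exact sequences $0 \to M' \to M \to S \to 0$) then yields $\Ext^i_A(M, N) = 0$ for all finitely generated $M, N$ and $i > n$, so $\mathrm{gldim}(A) \le n < \infty$.

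The main obstacle is the rearrangement step in (2): one must carefully propagate the splitting of $\Scal[a]\ast\Scal[b]$ into the middle of a longer filtration via octahedra, and verify that finitely many adjacent transpositions suffice to reach monotonicity in the shift sequence. The presilting vanishing $\Hom_\T(\Scal, \Scal[>0]) = 0$ is exactly the hypothesis that makes each such swap possible.
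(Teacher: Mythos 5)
The paper itself offers no proof of this proposition---it is imported directly from Aihara--Iyama---so your argument can only be judged on its own terms. Your overall strategy is sound and indeed matches the spirit of the source: define $\Ucal_\ell$ as the idempotent completion of $\Scal[-\ell]\ast\cdots\ast\Scal[\ell]$, show $\bigcup_\ell\Ucal_\ell$ is a thick subcategory containing $\Scal$, and then read off (1) and (3) by d\'evissage. The swap lemma $\Scal[a]\ast\Scal[b]\subseteq\Scal[b]\ast\Scal[a]$ for $a\ge b$ is exactly the right tool, and the use of associativity of $\ast$ to propagate it inside a longer filtration is fine. The deductions of (1) and (3) are also correct (in (3) note that since all the simples $S_1,\dots,S_r$ occur as summands of a single object of $\T$, (1) immediately gives a uniform $n$ with $\Ext^i_A(S_p,S_q)=0$ for $i>n$, whence $\mathrm{gldim}\,A\le n$).

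There is, however, one real gap in the rewriting step of (2). After bubble sort the shift sequence $(-\ell,\dots,\ell,-m,\dots,m)$ becomes weakly increasing, but it is not strictly increasing: shifts that occur in both $[-\ell,\ell]$ and $[-m,m]$ appear twice, e.g.\ for $\ell=m=1$ you land in $\Scal[-1]\ast\Scal[-1]\ast\Scal[0]\ast\Scal[0]\ast\Scal[1]\ast\Scal[1]$. Your ``padding missing shifts by $0$'' handles gaps in the sequence but does not collapse repeats, and the target class $\Scal[-N]\ast\cdots\ast\Scal[N]$ allows each shift exactly once. The fix is to note that your splitting lemma also applies with $a=b$: any triangle $P\to W\to Q\to P[1]$ with $P,Q\in\Scal[a]$ splits because $\Hom_\T(\Scal,\Scal[1])=0$, so $W\cong P\oplus Q$, and since $\Scal=\add\Scal$ is closed under finite coproducts one gets $W\in\Scal[a]$; thus $\Scal[a]\ast\Scal[a]\subseteq\Scal[a]$. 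Interleaving these merges with the adjacent transpositions (each step either shrinks the sequence or reduces the number of inversions, so the process terminates) produces a strictly increasing sequence inside $[-N,N]$, and then the padding step finishes the argument. With that one observation added, the proof is complete.
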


Silting subcategories are closely  related with  co-t-structures. The following  result was proved in \cite[Theorem 5.5]{MSSS}, see also   \cite{Bondarko2010}, \cite[Proposition 2.23]{AI},  \cite{KN2}, and \cite[Proposition 2.8]{IY}. 
\begin{theorem} \label{MSSS}
Every    silting subcategory $\Scal$ in $\T$ induces a bounded co-t-structure $\mathfrak t =(\Ucal, \Vcal)$ in $\T$ where 
\begin{enumerate}
\item $\Vcal=\Scal^{\perp_{>0}}$  consists of all objects that are finitely resolved by $\Scal$,   
%is the union of all iterated extensions  
% $ \Scal\ast\Scal[1]\ast\ldots\ast\Scal[\ell]$ with ${\ell\ge 0}$, 
and it 
is the smallest suspended subcategory of $\T$ containing $\Scal$;
\item  $\Ucal$  consists of all objects that are finitely coresolved by $\Scal[-1]$,
%is the union of all iterated extensions  $ \Scal[-\ell]\ast\ldots\ast\Scal[-2]\ast\Scal[-1]$ with ${\ell> 0}$, 
and it is the smallest cosuspended subcategory of $\T$ containing $\Scal[-1]$;
\item $\Scal=\Ucal[1]\cap\Vcal$ is the coheart of $\mathfrak t$.
\end{enumerate}
\end{theorem}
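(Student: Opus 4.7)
My strategy is to produce the desired co-t-structure by invoking the Auslander--Buchweitz-type Theorem~\ref{mombo}, applied with $\omega=\Scal$ and with $\Xcal$ taken to be the smallest cosuspended subcategory of $\T$ containing $\Scal$ and closed under direct summands. Since $\mathsf{thick}(\Xcal)=\mathsf{thick}(\Scal)=\T$, the co-t-structure produced by the theorem will inhabit all of $\T$.

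First I would verify the three hypotheses of Theorem~\ref{mombo}. Hypothesis~(i) is immediate from the choice of $\Xcal$. For~(ii), the inclusion $\Scal\subset\Xcal$ is clear, and $\Scal\subset\Xcal^{\perp_{>0}}$ reduces, via extension and summand closure, to the silting identity $\Hom_\T(\Scal[-k],\Scal[i])=\Hom_\T(\Scal,\Scal[i+k])=0$ for $k\geq 0$ and $i>0$. The technical step is hypothesis~(iii). Here I would first observe that the same silting vanishing forces every extension class in $\Scal[-j]*\Scal[-k]$ with $k\geq j$ to split, so that every object $X\in\Xcal$ lies, up to summands, in the canonical form $\Scal[-n]*\Scal[-n+1]*\cdots*\Scal[-1]*\Scal$ for some $n\geq 0$. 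Associativity of $*$ then yields a triangle $A\to X\to B\to A[1]$ with $B\in\Scal$ and $A\in\Scal[-n]*\cdots*\Scal[-1]\subset\Xcal$, and rotating produces the triangle $X\to B\to A[1]\to X[1]$ required by~(iii), with $C=B\in\Scal$ and $X'=A[1]\in\Xcal$.

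Theorem~\ref{mombo} then yields a co-t-structure $\mathfrak t=(\Ucal,\Vcal):=(\Xcal[-1],\widehat{\Scal})$ in $\T$ with coheart $\Scal$, which establishes part~(3) together with the ``finite-resolution'' description of $\Vcal$ in part~(1). Boundedness of $\mathfrak t$ is immediate from the finite (co)resolution descriptions of $\Vcal$ and $\Ucal$. The remaining identifications flow from torsion-pair duality. Since $\Xcal$ is cosuspended, the equality $\Vcal=\Ucal^{\perp_0}=\Xcal[-1]^{\perp_0}$ unfolds to $\Vcal=\Xcal^{\perp_{>0}}$; and $\Xcal^{\perp_{>0}}=\Scal^{\perp_{>0}}$ because any $X\in\Xcal$ is built from the $\Scal[-k]$ via extensions and summands, on which the silting orthogonality already implies $\Hom_\T(X,Y[i])=0$ whenever $Y\in\Scal^{\perp_{>0}}$ and $i>0$. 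The identification $\Vcal=\mathsf{Susp}(\Scal)$ is formal: $\widehat{\Scal}$ is a suspended subcategory of $\T$ containing $\Scal$ and closed under extensions and summands, while any such subcategory must contain every iterated extension of positive shifts of $\Scal$. A completely symmetric argument settles part~(2).

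The main obstacle I anticipate is the verification of hypothesis~(iii) of Theorem~\ref{mombo}: it relies on the ``triangular'' rearrangement of iterated extensions into canonical form made possible by the silting vanishing, together with a compatible inductive handling of summands via the octahedral axiom. Once this canonical-form reduction is in place, all the remaining identifications in parts~(1)--(3) become formal consequences of torsion-pair duality and the cosuspended structure of~$\Xcal$.
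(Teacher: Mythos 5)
Your proof follows exactly the paper's route: both apply Theorem~\ref{mombo} to the pair $(\Xcal,\omega)$ with $\Xcal$ the smallest cosuspended, summand-closed subcategory of $\T$ containing $\Scal$ and $\omega=\Scal$, then read off the co-t-structure $(\Ucal,\Vcal)=(\Xcal[-1],\widehat{\Scal})$ with coheart $\Scal$ inside $\widehat{\Xcal}=\mathsf{thick}(\Scal)=\T$. The summand-closure of the iterated extensions $\Scal[-n]\ast\cdots\ast\Scal$ that you flag as the main obstacle in verifying hypothesis~(iii) is precisely the technical content the paper defers to \cite{MSSS} and also singles out as the key step in the alternative proof of \cite[Proposition~2.8]{IY}.
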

%\begin{proof}
The statement is an application of  Theorem~\ref{mombo} to the pair $(\Xcal, \omega)$, where $\Xcal$ denotes the smallest cosuspended subcategory of $\T$ closed under direct summands and containing $\Scal$, and $\omega=\Scal$. 
%As shown in \cite[Proposition 5.4]{MSSS2}, t
It follows that $\Ucal=\Xcal[-1]$ and $\Vcal=\widehat{\Scal}$ form a co-t-structure   with coheart $\Scal$ in  $\widehat{\Xcal}=\mathsf{thick}(\Scal)=\Tcal$.

An alternative proof can be found in \cite[Proposition 2.8]{IY}. The main step there consists in showing that iterated extensions of the form $ \Scal\ast\Scal[1]\ast\ldots\ast\Scal[\ell]$ are closed under summands. The statement is then deduced from Proposition~\ref{require}(2).
%ow one checks that $\Vcal$ is the smallest suspended subcategory of $\T$ containing $\Scal$.
% and $\Xcal$ is the smallest cosuspended subcategory of $\T$ containing $\Scal$. The equality  $\Vcal=\Scal^{\perp_{>0}}$ follows by showing as in \cite[Proposition 2.16]{AI} that every object in $\Scal^{\perp_{>0}}$ occurs as a summand of an object in $\widehat{\Scal}$. 
%some iterated extension  of the form $ \Scal\ast\Scal[1]\ast\ldots\ast\Scal[\ell]$. \end{proof}

\smallskip

To every silting subcategory $\Scal$ one can thus assign  
a bounded co-t-structure with coheart $\Scal$, and this assignment is obviously injective. On the other hand,  any bounded co-t-structure $(\Ucal,\Vcal)$ is determined by its coheart: indeed,  $\Vcal$ consists of the objects which are finitely resolved by the coheart.   Since the coheart is always a silting subcategory, we obtain the following result. 
%If we set two  silting subcategories $\Scal,\Scal'$ to be \emph{equivalent} whenever $\add\Scal=\add\Scal'$, we obtain the following result. 

\begin{corollary}\label{msss}\cite[Corollary 5.9]{MSSS}
There is a   bijective correspondence between 
%equivalence classes of 
silting subcategories  and bounded co-t-structures in $\T$.\end{corollary}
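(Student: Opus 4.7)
The plan is to use the assignment provided by Theorem~\ref{MSSS} that sends a silting subcategory $\Scal$ to the bounded co-t-structure $(\Ucal,\Vcal)$ with coheart $\Scal$. Boundedness of the image follows because, by Proposition~\ref{require}(2) together with the explicit description of $\Ucal$ and $\Vcal$ in Theorem~\ref{MSSS}, every object of $\T$ lies in $\Ucal[n]\cap\Vcal[-m]$ for suitable $n,m\ge 0$. Injectivity is immediate: by item (3) of Theorem~\ref{MSSS} the coheart of the image recovers $\Scal$, so different silting subcategories produce different co-t-structures.

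For surjectivity, I would start from an arbitrary bounded co-t-structure $(\Ucal,\Vcal)$ and form its coheart $\Scal=\Ucal[1]\cap\Vcal$. The first task is to check that $\Scal$ is silting. Closure under finite coproducts and direct summands is inherited from the torsion-pair axioms on $\Ucal$ and $\Vcal$. For self-orthogonality, the co-t-structure axiom $\Ucal[-1]\subseteq\Ucal$ combined with $\Hom_\T(\Ucal,\Vcal)=0$ forces $\Vcal[1]\subseteq\Vcal$; hence for $S,S'\in\Scal$ and $i>0$ the identity $\Hom_\T(S,S'[i])=\Hom_\T(S[-1],S'[i-1])$ has $S[-1]\in\Ucal$ and $S'[i-1]\in\Vcal$, so the group vanishes.

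The core of the proof is then to verify that $\mathsf{thick}(\Scal)=\T$. For $X\in\Vcal$, I would apply the approximation triangle of the shifted co-t-structure $(\Ucal[1],\Vcal[1])$ to obtain $A\to X\to B\to A[1]$ with $A\in\Ucal[1]$ and $B\in\Vcal[1]$; rotating the triangle and using that $\Vcal$ is closed under extensions shows $A\in\Vcal$ as well, so $A\in\Scal$. Iterating this on $B$ with $(\Ucal[k],\Vcal[k])$ for $k=2,3,\dots$ produces a finite filtration of $X$ by shifts of objects in $\Scal$: boundedness gives $X\in\Ucal[b]$ for some $b$, and at the final step any residual term lies in $\Ucal[b]\cap\Vcal[b+1]$, which is zero by applying the vanishing $\Hom_\T(\Ucal[b],\Vcal[b+1])=0$ to its own identity. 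A symmetric argument covers $X\in\Ucal$, and the general case is obtained by splicing the two halves using the approximation triangle of $(\Ucal,\Vcal)$ itself.

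It remains to show that the co-t-structure associated to $\Scal$ by Theorem~\ref{MSSS} coincides with $(\Ucal,\Vcal)$. Both are bounded co-t-structures with coheart $\Scal$; by Theorem~\ref{MSSS}(1) the coaisle of the associated one equals the class of objects finitely resolved by $\Scal$, and the weight-truncation construction of the preceding step shows the same description for $\Vcal$. The two coaisles therefore agree, and so the co-t-structures coincide. I expect the main obstacle to be the iterative weight-truncation argument establishing that a bounded co-t-structure is reconstructed from its coheart via finite filtrations; once that structural statement is clean, both injectivity and surjectivity fall out formally.
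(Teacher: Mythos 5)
Your proof follows the same strategy the paper outlines: use Theorem~\ref{MSSS} to assign to each silting subcategory a bounded co-t-structure with coheart that subcategory, get injectivity from coheart recovery, and obtain surjectivity by showing that the coheart of any bounded co-t-structure is silting and that the coaisle coincides with the class of objects finitely resolved by the coheart. Your verification that the coheart $\Scal$ is presilting (using $\Hom_\T(S,S'[i])=\Hom_\T(S[-1],S'[i-1])$ with $S[-1]\in\Ucal$, $S'[i-1]\in\Vcal$) and that $\Vcal$ is built from $\Scal$ by finitely many extensions of shifts are exactly the two facts the paper invokes, so the approach matches.

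One bookkeeping slip in the termination step of the iterative weight truncation: your residual $B_k$ after applying $(\Ucal[k],\Vcal[k])$ need not stay in $\Ucal[b]$ as $k$ grows, because each step wedges in an object of $\Ucal[k+1]$, so the best uniform bound is $B_k\in\Ucal[\max(b,k+1)]$. Consequently the residual does not land in $\Ucal[b]\cap\Vcal[b+1]=0$; rather, for $k$ around $b$ one gets $B_k\in\Ucal[k+1]\cap\Vcal[k]=\Scal[k]$, so the iteration terminates with the last residual being a shift of an object of $\Scal$ (not necessarily zero). This still yields the desired finite filtration of $X$ by shifts of $\Scal$, so the conclusion is unaffected; only the description of the terminal step needs the corrected index tracking.
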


The partial order on co-t-structures given by inclusion of the (co)aisles induces a partial order on the collection $\mathsf{silt}\,\T$ of all silting subcategories of $\T$ defined by $$\Scal_1\ge\Scal_2\quad\text{ if }\quad \Scal_1^{\perp_{>0}}\supseteq\Scal_2^{\perp_{>0}}.$$
 In  \cite{AI}, Aihara and Iyama introduce  silting mutation and  show that it is closely related to the  partial order $\ge$. For instance, when $\T=\mathsf{K}^b(\mathrm{proj}(A))$ for a finite dimensional algebra $A$, silting mutation is depicted by the Hasse quiver of the poset $(\mathsf{silt}\,\T,\ge)$.
 %, called the \emph{silting quiver} of $A$. 
 This quiver is known to be connected - that is, iterated silting mutation is transitive -  whenever $A$ is a local, or a hereditary, or a canonical algebra \cite{AI}, or belongs to the class of silting-discrete algebras which will be discussed in Section~\ref{discrete}. Some examples of such quivers are computed in \cite[Section 2.6]{AI}. More details are given in \cite{BY}.
  
 The poset of silting subcategories can also be used to revisit a reduction process, called Calabi-Yau reduction, which is  employed in cluster tilting theory \cite{IYo}. In \cite{IY}, Iyama and Yang establish an isomorphism between the poset formed by the silting subcategories of a triangulated category $\T$ which contain a given presilting subcategory $\Pcal$,  and the poset of silting subcategories of the quotient $\T/\mathsf{thick}(\Pcal)$. Moreover, they discuss how this process, called \emph{silting reduction}, interacts with Calabi-Yau reduction.

\subsection{Large silting objects}

Aihara and Iyama observed in \cite[Section 4]{AI} that  silting is also closely related with  t-structures. To establish a connection, however, they had to move to triangulated categories with coproducts and to restrict to silting subcategories consisting of compact objects. 

This interplay with t-structures  led  Psaroudakis and Vit\'oria \cite{PV}, and independently, Nicol\'as, Saor\'in, and Zvonareva \cite{NSZ},  to develop the following concept where the requirement of  compactness is dropped (cf.~\cite[Definition 4.1]{PV} and \cite[Remark 3]{NSZ}).

\begin{definition}\label{PV} 
Let $\T$ be a triangulated category with coproducts.
An object $T$ in $\T$ is called \emph{silting} if $(T^{\perp_{>0}},T^{\perp_{\leq 0}})$ is a t-structure in $\T$.
%\item \emph{cosilting} if $({}^{\perp_{\leq 0}}T,{}^{\perp_{>0}}T)$ is a t-structure in $\T$ and $T\in{}^{\perp_{>0}}T$.
%We say that two silting  objects are \emph{equivalent}, if they give rise to the same t-structure in $\T$ and 
We call such a t-structure \emph{silting}, denote its  heart  by $\Hcal_T$, and write $H^0_T:\T\longrightarrow \Hcal_T$ for the associated cohomological functor.
\end{definition}

It follows immediately from the definition  that any silting object $T$ generates $\T$ and is contained in $T^{\perp_{>0}}$. Moreover, $T^{\perp_{>0}}$ is clearly closed under products. Hence the silting  t-structure given by $T$ is cosmashing, and it is  nondegenerate, because  the intersection of the shifted aisles $\bigcap_{k\in\mathbb Z} T^{\perp_{>0}}[k]$ coincides with  $T^{\perp_{\mathbb Z}}=0$, and similarly for the shifted coaisles.

\begin{examples}\label{relation} 
(1) Let $\T$ be a compactly generated triangulated category.
A family of compact objects $(T_k)_{k\in K}$ forms a silting subcategory  $\Scal=\add\{T_k\,\mid\, k\in K\}$ in $\T^c$  
%in the sense of Definition~\ref{AI} 
if and only if the coproduct $T=\textstyle \coprod_{k\in K} T_k$ is a silting object in $\T$.
% in the sense of Definition~\ref{PV}.
Indeed, a silting subcategory $\Scal$ of $\T^c$ generates $\T$, and by Theorem \ref{AItorsionpair}(2), it induces a t-structure $(\Scal^{\perp_{>0}}, \Scal^{\perp_{\le 0}})$ in $\T$. 
%where $\mathsf{Susp}(\Scal)=\Scal^{\perp_{>0}}
%$.By construction $\Scal^{\perp_{>0}}
%=T^{\perp_{>0}}
%$ and $\Scal^{\perp_{\le 0}}
%=T^{\perp_{\leq 0}}$, showing that $T$ is a silting object.
This shows the only-if-part. The reverse implication follows from the fact that
%in the sense of Definition~\ref{PV}, 
the thick closure of a compact and generating subcategory of $\T$  always equals
%. By a result due to Neeman it follows  that $\T^c$ is the smallest thick subcategory containing $\Scal$, 
$\T^c$, 
%in the sense of Definition~\ref{AI}, 
cf.~\cite[Proposition 4.2]{AI}. 
%In particular, a silting object in the sense of \cite{AI} is also silting in the sense of Definition~\ref{PV}.

\smallskip

(2) \cite[Proposition 4.13]{PV} Let $\T=\mathrm{D}(\Acal)$ be the derived category of  a Grothendieck category $\Acal$. A complex  $T$ is a silting object in $\T$ if and only if $T$ lies in $T^{\perp_{>0}}$ and generates $\T$, and  $T^{\perp_{>0}}$ is closed under coproducts.
\end{examples}

Definition~\ref{PV} relaxes the strong generation condition from Definition~\ref{AI} which requires $\T$ to be the thick closure of $\Scal$.   
This removes constraints on the existence of non-trivial morphisms and on the shape of   arbitrary objects from $\T$, as illustrated by
   the next result. 
   Roughly speaking,
  % Compared with  Proposition~\ref{require} and Theorem~\ref{MSSS}, now the property ``bounded'' is replaced by ``nondegenerate'', ``suspended'' is replaced by ``aisle'', and 
  iterated extensions of shifts are now replaced by  countable extensions which are  constructed as Milnor colimits.

%\begin{definition} Let  $\T$ be a triangulated category with coproducts (respectively, products). A t-structure in $\T$ is \emph{smashing} (respectively, \emph{cosmashing}) if its coaisle (respectively, its aisle) is closed under coproducts (respectively, products).\end{definition}

%\cite[Lemma 2.8]{AMV3} The functor $H^0_T$ induces an equivalence between $\Add_\Tcal(T)$ and $\mathrm{Proj}(\Hcal_T)$.

\begin{proposition}\label{properties silting} \cite[Section 4]{PV}\cite[Theorem 2]{NSZ} Let $\T$ be a triangulated category with coproducts, and let   $T$ be a silting object in $\T$ with corresponding silting t-structure $(\Vcal, \Wcal)$. Then 
\begin{enumerate} 
%occurs in a triangle $$V\longrightarrow Z\longrightarrow W\longrightarrow V[1]$$
\item  % \cite[Proposition 4.9]{PV}
$\Vcal
%=T^{\perp_{>0}} 
= \mathsf{Susp}(T)$ is the smallest aisle of $\T$ containing $T$;
\item for every object $X\in\T$ the truncation $v(X)$ is  a Milnor colimit of a sequence of the form $$\xymatrix{V_0\ar[r]^{f_1}&V_1\ar[r]^{f_2}&V_2\ar[r]&\ldots}$$ where $V_0$ is a coproduct of copies of $T$, and for each $n$ the cone of  $f_n$ is a coproduct of copies of $T[n]$;
\item the orthogonal category $\Ucal={}^{\perp_0}\Vcal$ satisfies $\Ucal[1]\cap\Vcal=\Add(T)$.
\end{enumerate}
\end{proposition}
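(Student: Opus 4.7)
My plan is to prove (2) first and derive (1) and (3) from it. The key preliminary is that the aisle $\Vcal={}^{\perp_0}\Wcal$ (which it is automatically, as any aisle of a torsion pair) is closed under set-indexed coproducts, because $\Hom_\T(-,Z)$ turns coproducts into products. In particular $T^{(I)}\in\Vcal$ for every set $I$, and $\Vcal$ is moreover closed under Milnor colimits, these being cones of maps between coproducts of objects of $\Vcal$.

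For (2), given $X\in\Tcal$ I would inductively build the tower together with compatible maps $V_n\to X$ so that $C_n:=\text{cone}(V_n\to X)$ satisfies $\Hom_\T(T,C_n[-j])=0$ for $0\le j\le n$. At stage $0$ take the universal morphism $V_0=T^{(I_0)}\to X$ with $I_0=\Hom_\T(T,X)$; applying $\Hom_\T(T,-)$ to the triangle and combining $V_0\in\Vcal$ with the surjectivity of $\Hom_\T(T,V_0)\twoheadrightarrow\Hom_\T(T,X)$ yields $\Hom_\T(T,C_0[m])=0$ for all $m\ge 0$. At stage $n$ form the universal $T[n]^{(I_n)}\to C_{n-1}$ with $I_n=\Hom_\T(T[n],C_{n-1})$, and apply the octahedral axiom to its composition with the connecting map $C_{n-1}\to V_{n-1}[1]$ to produce $f_n\colon V_{n-1}\to V_n$ with cone $T[n]^{(I_n)}$ and a new triangle $V_n\to X\to C_n\to V_n[1]$ in which $C_n=\text{cone}(T[n]^{(I_n)}\to C_{n-1})$; the Hom-vanishing propagates inductively. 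Setting $Y=\Mcolim V_n$, the compatibility assembles to $Y\to X$ with cone $Z=\Mcolim C_n$. Then $Y\in\Vcal$ by the preliminary, and to identify $Y$ with $v(X)$ it remains to show $Z\in\Wcal=T^{\perp_{\le 0}}$. I would invoke the Milnor short exact sequence
\[0\to{\lim}^{1}\Hom_\T(T,C_n[k-1])\to\Hom_\T(T,Z[k])\to\lim\Hom_\T(T,C_n[k])\to 0\]
associated to the cohomological functor $\Hom_\T(T,-[k])$, noting that for each $k\le 0$ the groups $\Hom_\T(T,C_n[k])$ vanish as soon as $n\ge -k$, so both the limit and the $\lim^{1}$ term are zero.

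Part (1) is then immediate: $T\in\Vcal$ by silting self-orthogonality and $\Vcal$ is a suspended coproduct-closed subcategory, so $\mathsf{Susp}(T)\subseteq\Vcal$; conversely, for $X\in\Vcal$ one has $X=v(X)=\Mcolim V_n$ with every $V_n\in\mathsf{Susp}(T)$ by the construction, and $\mathsf{Susp}(T)$ is closed under Milnor colimits by definition. For the inclusion $\Add(T)\subseteq\Ucal[1]\cap\Vcal$ in (3), note $T\in\Vcal$ and $\Hom_\T(T[-1],Z)=\Hom_\T(T,Z[1])=0$ for $Z\in\Vcal$, so $T\in\Ucal[1]\cap\Vcal$; this intersection is closed under coproducts (both $\Vcal$ and $\Ucal[1]$ are left orthogonals) and under summands, hence contains $\Add(T)$. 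Conversely, given $Y\in\Ucal[1]\cap\Vcal$ I apply stage $0$ of the tower to $Y$, obtaining $V_0=T^{(I_0)}\to Y$ with cone $C_0$ satisfying $\Hom_\T(T,C_0[m])=0$ for all $m\ge 0$, i.e.\ $C_0\in\Vcal[1]$. Since $Y\in\Ucal[1]$ is equivalent to $\Hom_\T(Y,\Vcal[1])=0$, this forces $\Hom_\T(Y,C_0)=0$, so the triangle $V_0\to Y\to C_0\to V_0[1]$ splits as $Y\cong V_0\oplus C_0$. Then $C_0$ inherits membership in $\Ucal[1]$ as a summand of $Y$, so $C_0\in\Vcal[1]\cap\Ucal[1]=(\Vcal\cap\Ucal)[1]$; but any $Z\in\Vcal\cap\Ucal$ satisfies $\Hom_\T(Z,Z)=0$, forcing $Z=0$. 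Hence $C_0=0$ and $Y\cong V_0\in\Add(T)$.

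The main obstacle I anticipate is the passage to the Milnor colimit in (2): because $T$ is not assumed compact, $\Hom_\T(T,-)$ need not commute with coproducts or Milnor colimits, so one cannot simply pass $\Hom_\T(T,-)$ through. The Milnor $\lim^{1}$ exact sequence salvages this, but it only delivers the conclusion because the inductive construction was designed so that, in each fixed degree, the relevant $\Hom$-groups eventually vanish rather than merely forming a null system; this is the subtle point that controls the whole argument, and parts (1) and (3) are then quick corollaries of the tower.
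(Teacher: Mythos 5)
Your construction of the tower $(V_n)$ coincides with the paper's, and parts (1) and (3) are the same corollaries the paper draws. There is, however, a genuine gap in the crucial step of (2), at exactly the point you flag as the main obstacle.

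You propose to show that the cone $Z$ of $\Mcolim V_n\to X$ lies in $\Wcal=T^{\perp_{\le 0}}$ by invoking the Milnor short exact sequence
\[
0\to{\lim}^{1}\Hom_\T(T,C_n[k-1])\to\Hom_\T(T,Z[k])\to\lim\Hom_\T(T,C_n[k])\to 0 .
\]
But no such sequence is available here. The Milnor $\lim$--$\lim^1$ sequence for a Milnor colimit $Z=\Mcolim C_n$ holds for the \emph{contravariant} functor $\Hom_\T(-,W)$ (because $\Hom_\T(\coprod C_n,W)\cong\prod\Hom_\T(C_n,W)$), not for the covariant one unless the test object is compact. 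Applying $\Hom_\T(T,-[k])$ to the triangle $\coprod C_n\xrightarrow{1-g}\coprod C_n\to Z\to\coprod C_n[1]$ expresses $\Hom_\T(T,Z[k])$ as an extension of a kernel and cokernel of maps on $\Hom_\T(T,\coprod C_n[k])$; without compactness of $T$ there is no way to rewrite $\Hom_\T(T,\coprod C_n[k])$ as $\coprod\Hom_\T(T,C_n[k])$ and hence no $\lim$ or $\lim^1$ of the groups $\Hom_\T(T,C_n[k])$ appears. So the eventual vanishing of $\Hom_\T(T,C_n[k])$ tells you nothing about $\Hom_\T(T,Z[k])$. The paper avoids this obstacle entirely: it never computes $\Hom_\T(T,-)$ of a Milnor colimit. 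Instead, it studies the map $f\colon V=\Mcolim V_n\to X$ and observes, from the surjectivity of $\Hom_\T(T[k],a_n)$ for $n\ge k$ together with $a_n=f\circ p|_{V_n}$, that $\Hom_\T(T[k],f)$ is \emph{surjective} for $k\ge 0$ (and, by a further analysis of the construction, also \emph{injective}); the vanishing $\Hom_\T(T[k],W)=0$ is then read off from the long exact sequence attached to the triangle $V\to X\to W\to V[1]$. For $k=0$ one only needs surjectivity and $V\in T^{\perp_{>0}}$; for $k\ge 1$ one needs both. To repair your proof you would need to switch to this argument, which is qualitatively different from a $\lim^1$ computation.

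One smaller slip in (3): from $\Hom_\T(Y,C_0)=0$ the map $Y\to C_0$ in the triangle $V_0\to Y\to C_0\to V_0[1]$ is zero, and the correct consequence is that $V_0\to Y$ is a split epimorphism, i.e. $V_0\cong Y\oplus C_0[-1]$ — not $Y\cong V_0\oplus C_0$. This already gives $Y\in\Add(T)$ as a direct summand of $V_0$, so your subsequent attempt to show $C_0=0$ (which also rests on the incorrect claim that $C_0$ is a summand of $Y$) is unnecessary; the paper stops at the splitting.
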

\begin{proof} 
First of all, notice that the Milnor colimits described in (2) belong to $\mathsf{Susp}(T)$, hence (1) follows from (2).
 For statement (2), we sketch the idea in the proof of \cite[Theorem 2]{NSZ}. Given  $X\in\Tcal$, one constructs a direct system of triangles 
  \begin{equation}\label{triangles}
\Delta_n:\quad V_n\stackrel{a_n}{\longrightarrow} X \stackrel{b_n}{\longrightarrow} Y_n\longrightarrow V_n[1]
\end{equation}
with the property that $V_n$ belongs to $\Vcal$ and
\begin{equation}\label{orthogonal}
\Hom_\T(T[k],Y_n)=0\:\text{ for all } 0\le k\le n.\end{equation}
For $n=0$ the map $a_0:V_0{\longrightarrow} X$ is just the universal morphism defined by all morphisms $T\to X$. If $n>0$ and the triangles $\Delta_0,\ldots,\Delta_{n-1}$  are constructed, then one considers the universal morphism $T_n\longrightarrow Y_{n-1}$ defined by all morphisms $T[n]\to Y_{n-1}$ and observes that its cone $Y_n$ satisfies condition (\ref{orthogonal}). From   the triangle $\Delta_{n-1}$, using the octahedral axiom, one obtains a triangle $\Delta_n$ together with a connecting morphism $f_n:V_{n-1}\longrightarrow V_n$ with cone $T_n$.

Now let $V$ be the Milnor colimit of the sequence $V_0\stackrel{f_1}{\longrightarrow}V_1\stackrel{f_1}{\longrightarrow}V_2\stackrel{f_3}{\longrightarrow}\ldots$. There is a triangle 
\begin{equation}
\coprod_{n\ge 0}V_n\longrightarrow \coprod_{n\ge 0}V_n\stackrel{p}{\longrightarrow}V\longrightarrow\coprod_{n\ge 0}V_n[1]
\end{equation}
 showing that $V$ belongs to $\Vcal$, and by construction,  there is a morphism $V\stackrel{f}{\longrightarrow} X$ such that $f\circ p$ is the morphism $a$ induced by the family $(a_n)$. We obtain a triangle
\begin{equation}\label{triangle3}
V\stackrel{f}{\longrightarrow} X\longrightarrow W\longrightarrow V[1]\end{equation}
and it remains to show that $W$ belongs to $\Wcal$. Fix $k\ge 0$. By condition (\ref{orthogonal}), the morphism  $\Hom_\T(T[k],a_n)$ is surjective for all $n\ge k$. Since $a_n=f\circ p\mid_{V_n}$, it follows that the morphism  $\Hom_\T(T[k],f)$ is surjective. For $k=0$, this immediately implies that  $\Hom_\T(T,W)=0$. But in fact, one can show that the construction also entails injectivity of  $\Hom_\T(T[k],f)$. From the triangle (\ref{triangle3}) one then infers that $\Hom_\T(T[k],W)=0$ for all $k\ge 0$, hence 
$W$ belongs to $T^{\perp_{\le 0}}=\Wcal$. 
\smallskip

Finally, let us check (3). The inclusion $\supset$ is clear. For the other inclusion, given an object $X$ in $\Ucal[1]\cap\Vcal$, we consider the triangle
  $K\to T'\stackrel{u}{\longrightarrow} X\stackrel{v}{\longrightarrow} K[1]$
  where $u$ is 
  %the universal morphism defined by all morphisms $T\to X$. 
  an $\Add(T)$-precover of $X$. Applying the functor $\Hom_{\T}(T,-)$, we see that $\Hom_{\T}(T,K[i])=0$ for all $i>0$, hence $K\in\Vcal$ and therefore $\Hom_\T(X,K[1])=0$. In particular, $v=0$ and the triangle splits, so 
$X$ lies in $\Add(T)$.
\end{proof} 
 
Let us now investigate the connection with co-t-structures.

\begin{proposition}\label{silting heart}
Let $\T$ be a   triangulated category with coproducts, and let $T$ be a silting object in $\T$ with corresponding silting t-structure $\mathfrak{t}=(\Vcal, \Wcal)$.
Then \begin{enumerate}
\item  $H^0_T(T)$ is a projective generator of the heart $\Hcal_T$. In particular, if $T$ is compact, $\Hcal_T$ is equivalent to the category of modules over 
$\End_\T(T)$.
\item If  $\T$ is compactly generated, then $\mathfrak{t}$  has a left adjacent co-t-structure $(\Ucal,\Vcal)$.
\end{enumerate}
%\item %\cite[Corollary 4.7]{PV} If  $T$ is compact, then the heart is equivalent to the category of modules over $\End_\T(H^0_T(T))$.
\end{proposition}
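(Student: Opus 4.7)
For part (1), my plan is to build everything from the natural isomorphism
\[
\Hom_{\Hcal_T}(H^0_T(X), H) \cong \Hom_\T(X, H) \qquad (X \in \Vcal, \; H \in \Hcal_T).
\]
I would establish it by truncating $X$ with respect to the shifted t-structure $(\Vcal[1], \Wcal[1])$: for $X \in \Vcal$ this gives a triangle $a(X) \to X \to H^0_T(X) \to a(X)[1]$ with $a(X) \in \Vcal[1]$, and $H^0_T(X) \in \Vcal \cap \Wcal[1] = \Hcal_T$ (the first membership holds because both $X$ and $a(X)[1] \in \Vcal[2] \subseteq \Vcal$ do, and $\Vcal$ is closed under extensions). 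Applying $\Hom_\T(-, H)$, both outer terms vanish by the orthogonality $\Hom_\T(\Vcal[1], \Wcal[1]) = \Hom_\T(\Vcal, \Wcal) = 0$, and the middle isomorphism drops out.

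Taking $X = T$, projectivity of $H^0_T(T)$ follows because any triangle $H' \to H \to H'' \to H'[1]$ coming from a short exact sequence in $\Hcal_T$ satisfies $\Hom_\T(T, H'[1]) = 0$ (since $H' \in \Vcal = T^{\perp_{>0}}$); the isomorphism above then transports the induced surjection $\Hom_\T(T, H) \to \Hom_\T(T, H'')$ to $\Hcal_T$. The generator property follows similarly: if $\Hom_{\Hcal_T}(H^0_T(T), H)$ vanishes, then $\Hom_\T(T, H) = 0$, and combined with $H \in \Vcal$ and $H \in \Wcal[1]$ (which together kill $\Hom_\T(T, H[i])$ for all $i \ne 0$), one gets $H \in T^{\perp_\Zbb} = 0$ because $T$ generates $\T$.

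For the Morita equivalence, applying $\Hom_\T(T, -)$ to the same triangle at $X = T$ forces $\Hom_\T(T, a(T)) = \Hom_\T(T, a(T)[1]) = 0$ (writing $a(T) = Y[1]$ with $Y \in \Vcal$ and using $T \in T^{\perp_{>0}}$), so $\End_\T(T) \cong \Hom_\T(T, H^0_T(T)) \cong \End_{\Hcal_T}(H^0_T(T))$ as abelian groups; a short check on composition via the canonical morphism $T \to H^0_T(T)$ upgrades this to a ring isomorphism. Compactness of $T$ makes the silting t-structure smashing, so coproducts in $\Hcal_T$ coincide with $\T$-coproducts and $\Hom_{\Hcal_T}(H^0_T(T), -) \cong \Hom_\T(T, -)$ commutes with them. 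The classical Morita theorem for abelian categories with a compact projective generator then yields $\Hcal_T \simeq \Modr \End_\T(T)$.

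For part (2), my plan is to invoke Theorem~\ref{Bondarko} applied to $\mathfrak t = (\Vcal, \Wcal)$. The aisle $\Vcal = T^{\perp_{>0}}$ is closed under arbitrary $\T$-products (as $\Hom_\T(T,-)$ commutes with products), so $\mathfrak t$ is cosmashing; part (1) provides a projective generator of $\Hcal_T$, which has coproducts because $\T$ does, hence enough projectives; and $\mathsf{Loc}(\Vcal) = \T$ satisfies Brown representability for the dual because $T \in \Vcal$ is a generator of $\T$ (so $\mathsf{Loc}(T) = \T$ by Theorem~\ref{CGR}) and $\T$ is compactly generated. Theorem~\ref{Bondarko} then produces the required left adjacent co-t-structure. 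The step I expect to take the most care is the ring identification in the Morita part of (1): upgrading the additive isomorphism $\End_\T(T) \cong \End_{\Hcal_T}(H^0_T(T))$ to a ring isomorphism means tracking that the comparison induced by $T \to H^0_T(T)$ is compatible with composition, a short but slightly fiddly verification.
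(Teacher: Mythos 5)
Your proposal is correct and follows essentially the same route as the paper. For part (1) you set up the key isomorphism $\Hom_{\Hcal_T}(H^0_T(X),H)\cong\Hom_\T(X,H)$ by truncating against the shifted t-structure $(\Vcal[1],\Wcal[1])$ exactly as in \cite[Propositions 4.3, 4.7]{PV} (the paper only records the case $X=T$ and phrases projectivity through the vanishing of $\Ext^1_{\Hcal_T}$, while you verify the lifting property directly via the short-exact-sequence-to-triangle correspondence in the heart — these are the same argument); your generator and Morita arguments are likewise the paper's, stated a bit more carefully, including the caveat about upgrading the additive to a ring isomorphism, which the paper leaves implicit. Part (2) coincides verbatim with the paper's application of Theorems~\ref{CGR} and~\ref{Bondarko}.
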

\begin{proof}  (1) For the reader's convenience we repeat the arguments from \cite[Proposition 4.3 and Corollary 4.7]{PV}. Recall that  $\Hcal_T=\Vcal\cap\Wcal[1]$ and $H^0_T(T)=W[1]$ in the canonical triangle
\begin{equation}\label{triangle}
V[1]\longrightarrow T \stackrel{f}{\longrightarrow} W[1]\longrightarrow V[2]
\end{equation} with $V\in\Vcal$ and $ W\in \Wcal$. 
Given an object $X$ in the heart $\Hcal_T$, one  verifies by applying $\Hom_{\T}(-,X)$ on (\ref{triangle}) that $\Hom_\T(H^0_T(T),X)\cong \Hom_\T(T,X)$  and
$\Ext^1_{\Hcal_T}(H^0_T(T), X)\cong\Hom_\T(H^0_T(T), X[1])=0$. Hence  $H^0_T(T)$ is a projective object in  $\Hcal_T$. 
Moreover, since $\Hcal_T=T^{\perp_{\not= 0}}$ and $T$ is a generator of $\T$, it follows 
%for any nonzero object  $X$ in  $\Hcal_T$ there is a nonzero morphism in $\Hom_\T(T,X)$. But this morphism factors through the map $f$ in (\ref{triangle}) since $\Hom_\T(V[1],X)=0$. So we conclude 
that $\Hom_\T(H^0_T(T),X)\not=0$ whenever $X\not=0$, showing that  $H^0_T(T)$  is also a generator in the heart $\Hcal_T$.
Finally, compactness of $T$ ensures that the projective generator  $H^0_T(T)$  is small, so it follows from  Morita theory that $\Hcal_T$ is equivalent to the category of modules  over $B=\End_{\T}(H^0_T(T))\cong\End_{\T}(T)$. 
 
(2) Using Theorem~\ref{CGR} and the fact that $T$ is a generator, we see that  the smallest localising subcategory of $\T$ containing the aisle $\Vcal$ of $\mathfrak t$ is $\T$ itself, and thus it satisfies  {Brown representability for the dual}. Moreover,  $(\Vcal,\Wcal)$ is cosmashing t-structure whose heart $\Hcal_\mathfrak{t}$ has enough projectives by (1).  Now it follows from Theorem~\ref{Bondarko} that  $\mathfrak t$ has a left adjacent co-t-structure.
\end{proof}

%Let us formalise the properties of TTF-triples arising from silting objects.\begin{definition}A suspended TTF triple $(\Ucal,\Vcal,\Wcal)$ is  said to be a \emph{silting TTF triple}if   the torsion pair  $(\Ucal,\Vcal)$ is {generated by a set} and the the torsion pair $(\Vcal,\Wcal)$ is nondegenerate.
%and it is said to be \emph{suspended} (respectively, \emph{cosuspended}) if so is the class $\Vcal$.
 %Moreover, a TTF triple $(\Ucal,\Vcal,\Wcal)$  \begin{itemize}\item \emph{suspended} (respectively, \emph{cosuspended}) if so is the class $\Vcal$;\item\emph{nondegenerate} if so is the torsion pair $(\Vcal,\Wcal)$; \textcolor{blue}{ check whether it fits with cosilting case!}\item\emph{generated by a set} if so is  the torsion pair  $(\Ucal,\Vcal)$. \textcolor{blue}{ for cosilting case needs cogenerated?}\end{itemize}\end{definition}

 By condition (3) in Proposition~\ref{properties silting}, the aisle of a silting t-structure determines the additive closure $\Add(T)$ of the silting object. This suggests the following
 \begin{definition} 
  Two  silting objects $T,T'$  in $\T$ are \emph{equivalent} if 
  %they give rise to the same aisle
   $T^{\perp_{>0}}= T'\,^{\perp_{>0}}$.\end{definition}
We are now ready to prove that silting objects parametrise  t-structures and TTF triples in $\T$, cf.~\cite[Section 4]{NSZ}.

\begin{theorem}\label{NSZ-bijection} If $\T$ is a compactly generated triangulated category, there is a   bijective correspondence between 
\begin{enumerate}
\item[(i)] equivalence classes of silting objects;
\item[(ii)] nondegenerate cosmashing t-structures   whose heart admits a projective generator;
\item[(iii)] nondegenerate suspended TTF triples
%where the co-t-structure  $(\Ucal,\Vcal)$ is 
which are {generated by a set}.
%and the t-structure $(\Vcal,\Wcal)$ is nondegenerate.
\end{enumerate}
\end{theorem}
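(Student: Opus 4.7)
The plan is to close the cycle (i) $\Rightarrow$ (ii) $\Rightarrow$ (iii) $\Rightarrow$ (i), invoking the structural results assembled earlier in the section.

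\textbf{(i) $\Rightarrow$ (ii).} Given a silting object $T$, Definition \ref{PV} furnishes the t-structure $\mathfrak{t} = (T^{\perp_{>0}}, T^{\perp_{\le 0}})$; the remarks following that definition show $\mathfrak{t}$ is nondegenerate (since $T$ generates $\T$, so $\bigcap_k T^{\perp_{>0}}[k] = T^{\perp_{\Zbb}} = 0$) and cosmashing ($T^{\perp_{>0}}$ is a right orthogonal, hence closed under products). Proposition \ref{silting heart}(1) supplies $H^0_T(T)$ as a projective generator of $\Hcal_T$. The assignment descends to equivalence classes, since the aisle $T^{\perp_{>0}}$ both determines and is determined by the equivalence class, which also gives injectivity.

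\textbf{(ii) $\Rightarrow$ (iii).} Given $\mathfrak{t} = (\Vcal, \Wcal)$ nondegenerate, cosmashing, with projective generator $P$ of $\Hcal_\mathfrak{t}$, Theorem \ref{Bondarko} produces a left adjacent co-t-structure $(\Ucal, \Vcal)$: Brown representability for the dual of $\mathsf{Loc}(\Vcal)$ is automatic here because nondegeneracy forces $\mathsf{Loc}(\Vcal) = \T$ (any object right-orthogonal to $\mathsf{Loc}(\Vcal)$ lies in $\bigcap_n \Wcal[n] = 0$), and $\T$ is compactly generated. The resulting suspended TTF triple is nondegenerate by construction. To witness ``generated by a set'', lift $P$ to an object $T$ of the coheart $\Ccal = \Ucal[1] \cap \Vcal$ with $H^0_\mathfrak{t}(T) = P$ via the co-t-structure truncation, and use projectivity of $P$ to verify $\Add(T) = \Ccal$; Lemma \ref{generatingcoheart} then yields $\Vcal = \Ccal^{\perp_{>0}} = T^{\perp_{>0}} = \{T[-n] : n \ge 1\}^{\perp_0}$.

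\textbf{(iii) $\Rightarrow$ (i).} Given a nondegenerate suspended TTF triple $(\Ucal, \Vcal, \Wcal)$ generated by a set, Lemma \ref{generatingcoheart} yields $\Vcal = \Ccal^{\perp_{>0}}$ with $\Ccal$ generating $\T$. The reverse direction of Theorem \ref{Bondarko} applied to $(\Vcal, \Wcal)$, which already carries the adjacent co-t-structure, gives enough projectives in $\Hcal_\mathfrak{t}$; combining this with the ``generated by a set'' hypothesis promotes ``enough'' to a single projective generator $P$, which we lift to $T \in \Ccal$ satisfying $\Add(T) = \Ccal$. Then $T^{\perp_{>0}} = \Add(T)^{\perp_{>0}} = \Ccal^{\perp_{>0}} = \Vcal$, and a calculation with the truncation triangle of $(\Vcal, \Wcal)$, using $T \in \Vcal$, $T[-1] \in \Ucal$, and $T^{\perp_{\Zbb}} = 0$ (from the fact that $T$ generates), produces $T^{\perp_{\le 0}} = \Wcal$. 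Hence $T$ is silting with corresponding silting t-structure $(\Vcal, \Wcal)$.

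The principal technical obstacle is the lifting step common to (ii) $\Rightarrow$ (iii) and (iii) $\Rightarrow$ (i): from the projective generator $P$ of the heart, construct a coheart object $T$ with $H^0_\mathfrak{t}(T) = P$ and $\Add(T) = \Ccal$. Projectivity of $P$ ensures that each $C \in \Ccal$ admits a surjection $P^{(I)} \twoheadrightarrow H^0_\mathfrak{t}(C)$ in $\Hcal_\mathfrak{t}$ which lifts to a morphism $T^{(I)} \to C$ in $\T$, and one must argue that this lift is a split epimorphism exhibiting $C$ as a summand of $T^{(I)}$; this is the core of the argument in \cite{NSZ}.
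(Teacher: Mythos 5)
Your overall structure is different from the paper's. The paper does not close a cycle (i)\,$\Rightarrow$\,(ii)\,$\Rightarrow$\,(iii)\,$\Rightarrow$\,(i); it shows that (i)\,$\to$\,(ii) and (i)\,$\to$\,(iii) are \emph{separately} bijective, so the bijection (ii)\,$\leftrightarrow$\,(iii) is obtained by composition and never has to be established directly. Your (i)\,$\Rightarrow$\,(ii) agrees with the paper and is fine.

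There are two genuine gaps. First, in your (ii)\,$\Rightarrow$\,(iii), the sentence ``lift $P$ to an object $T$ of the coheart $\Ccal$ with $H^0_{\mathfrak t}(T)=P$ via the co-t-structure truncation'' does not describe a construction that works as stated. Since $P$ already lies in $\Vcal$, truncating $P$ against $(\Ucal,\Vcal)$ is trivial; truncating $P$ against $(\Ucal[1],\Vcal[1])$ produces a triangle $U\to P\to V\to U[1]$ with $U\in\Ccal$, but the long exact sequence for $H^0_{\mathfrak t}$ only gives a \emph{surjection} $H^0_{\mathfrak t}(U)\twoheadrightarrow P$ (with kernel $H^{-1}_{\mathfrak t}(V)$), not an isomorphism, and it gives no control over $\Add(U)$ versus $\Ccal$. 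The paper does not attempt this lift at all: in the surjectivity of (i)\,$\to$\,(ii) it produces the silting object $T$ by Brown representability for the dual applied to the functor $\Hom(P,H^0_{\mathfrak t}(-))$, and $\Add(T)=\Ccal$ then follows a posteriori from Proposition~\ref{properties silting}(3), not from any truncation of $P$.

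Second, and more seriously, in your (iii)\,$\Rightarrow$\,(i) the claim that ``combining this with the `generated by a set' hypothesis promotes `enough' [projectives] to a single projective generator $P$'' has no justification given, and it is not clear how one would extract a single projective generator from ``enough projectives'' plus the existence of a generating set $\Scal\subseteq\Ucal$ without essentially carrying out the paper's construction. The paper produces the silting object from $\Scal$ \emph{directly}, without going through the heart at all: for each $S\in\Scal$ it takes the co-t-structure truncation triangle $U_S\to S[1]\to V_S\to U_S[1]$, checks that $V_S$ lands in the coheart $\Ccal$, sets $T=\coprod_S V_S$, and then proves $\Add(T)=\Ccal$ by the $\Add(T)$-precover argument (the same one you cite from the proof of Proposition~\ref{properties silting}(3)); Lemma~\ref{generatingcoheart} then shows $T$ generates, and $T$ is silting with the given TTF triple. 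Your step is at best circular: you need $T$ to produce the generator $P=H^0_{\mathfrak t}(T)$, but you are trying to use $P$ to produce $T$. You do correctly identify the coheart-lifting as the key technical obstacle, but the cleanest repair is to replace the (iii)\,$\Rightarrow$\,(i) step with the paper's direct construction from $\Scal$, and to replace the lift in (ii)\,$\Rightarrow$\,(iii) with Brown representability applied to $\Hom(P,H^0_{\mathfrak t}(-))$.
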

\begin{proof} 
From the discussion above we already know that every silting object $T$ gives rise to a \mbox{t-structure} $(\Vcal,\Wcal)$ as in (ii) and a suspended TTF triple $(\Ucal,\Vcal,\Wcal)$ as in (iii), where $\Vcal=T^{\perp_{>0}}$  is determined by the equivalence class of $T$. Hence we have injections (i)$\to$(ii), and  (i)$\to$(iii).

Conversely, if $\mathfrak{t}=(\Vcal,\Wcal)$ is a nondegenerate cosmashing t-structure whose heart admits a projective generator $P$, then the cohomological functor $\Hom_T(P,H^0_\mathfrak{t}(-))$ takes products to products. So, as  $\T$ satisfies  {Brown representability for the dual},  there exists an object $T\in\T$ such that $\Hom_T(P,H^0_\mathfrak{t}(-))=\Hom_\T(T,-)$, and it turns out that $T$ is a silting object with silting t-structure $\mathfrak{t}$. This proves that the map  (i)$\to$(ii) is bijective.

Consider now a nondegenerate suspended TTF triple $(\Ucal,\Vcal,\Wcal)$ which is generated by a set $\Scal$ (which then has to lie in $\Ucal$). For every object $S\in\Scal$ we consider the canonical triangle $$U_S\longrightarrow S[1]\stackrel{f_S}{\longrightarrow} V_S\longrightarrow U_S[1]$$ with $U_S\in\Ucal$ and $V_S\in\Vcal$.
Since $S[1]$ and  $U_S[1]$ lie in $\Ucal[1]$, the object $V_S$ lies in the coheart $\Ccal = \Ucal[1] \cap\Vcal$. Setting $T=\coprod_{S\in\Scal} V_S$, it follows that $\Add(T)\subset \Ccal$. To show the reverse inclusion, we pick an object $C\in\Ccal$ together  with a triangle
  $$K\to T'\stackrel{u}{\longrightarrow} C\stackrel{v}{\longrightarrow} K[1]$$ 
  where $u$ is  an $\Add(T)$-precover of $C$, and we apply the functor $\Hom_\Tcal(S[1],-)$ on it. Using that $f_S$ is a left $\Vcal$-approximation of $S[1]$, we see that $\Hom_\Tcal(S[1],u)$ is surjective.
  As in the proof of Proposition~\ref{properties silting}, 
  %and since $\Hom_\Tcal(S[1],T^{(I)}[1])=0$, 
  we conclude that 
  %$\Hom_\Tcal(S[1],K[1])=0$. This shows that 
  $K$ lies in $\Vcal$, and the triangle splits,
  %. Then the object $C$ in $\Ucal[1]$ can't have nonzero maps to $K[1]$, hence $v=0$, and $u$ splits, 
  as desired. 
  
  Now we infer from Lemma~\ref{generatingcoheart} that  $T$ is a generator, and in fact it is  a silting object in the preimage of the  TTF triple $(\Ucal,\Vcal,\Wcal)$. So, we conclude that also the 
map  (i)$\to$(iii) is bijective.
\end{proof}

\section{Silting complexes}\label{Siltcompl}

\subsection{Silting objects in derived module categories}
Throughout this section, let $A$ be a ring with identity. According to Example~\ref{relation}, the regular module  $A$, viewed as a stalk complex, is a silting object in the unbounded derived category $\T=\mathsf{D}(A)$. The corresponding t-structure is the  standard t-structure $(\mathsf D^{\leq 0}, \mathsf D^{>0})$ from Example~\ref{standard}.

In order to find more 
silting objects in $\mathsf{D}(A)$, we turn to the complexes which were termed ``big semitilting'' by Wei in \cite{Wei}.

\begin{definition}\label{silting complex} A bounded complex of projective $A$-modules $\sigma$ in $\mathsf{K}^b(\mathrm{Proj}(A))$ is a \emph{silting complex} if it satisfies the following conditions:
\begin{enumerate}
\item[(S1)] $\Hom_{D(A)}(\sigma,\sigma^{(I)}[i])=0$ for all sets $I$ and $i>0$;
\item[(S2)] the smallest triangulated subcategory of $\mathsf D(A)$ containing $\Add(\sigma)$ is $\mathsf{K}^b(\mathrm{Proj}(A))$.
\end{enumerate}
Moreover, $\sigma$ is a \emph{tilting complex} if,  in addition, it  is compact and exceptional, i.e.~it belongs to  $\mathsf{K}^b(\mathrm{proj}(A))$ and satisfies $\Hom_{D(A)}(\sigma,\sigma[i])=0$ for all $i\not=0$.
\end{definition}

\begin{remark}\label{commentstodef}
(1) From Lemma~\ref{swindle} one easily deduces that
 the smallest triangulated subcategory of $\mathsf D(A)$ containing $\Add(\sigma)$ is closed under direct summands. Hence condition (S2) actually requires $\mathsf{K}^b(\mathrm{Proj}(A))=\mathsf{thick}(\Add\sigma)$. In other words,
 $\sigma$ is a silting complex if and only if $\Add\sigma$ is a silting subcategory of $\mathsf{K}^b(\mathrm{Proj}(A))$.
 %  in the sense of  Definition~\ref{AI}.
%Moreover, it is easy to see that a complex  satisfying (S2) must be a {generator}.

\smallskip

(2)
If $\sigma$ belongs to  $\mathsf{K}^b(\mathrm{proj}(A))= \mathsf D (A)^c$, then condition (S1) is equivalent to the condition $\Hom(\sigma,\sigma[i])=0$ for all $i>0$, and
(S2) amounts to $\sigma$ being a generator. The silting subcategories of $\mathsf{K}^b(\mathrm{proj}(A))$ are thus precisely the subcategories of the form $\Scal=\add\sigma$ for a compact silting complex $\sigma$, cf.~Example~\ref{compex}. If $A$ is a finite dimensional algebra, then  one can choose $\sigma$ to be \emph{basic}, i.e.~a direct sum of $n$ pairwise non-isomorphic indecomposable compact complexes. Here $n$ is the number of isomorphism classes of indecomposable projective $A$-modules, see \cite[Corollary 2.28]{AI}.
\end{remark}

%It is shown in \cite[Proposition 4.2]{Wei} that 
By condition (S1), the class $\sigma^{\perp_{>0}}$ given by a silting complex $\sigma$ contains $\mathsf{Susp}(\sigma)$. %. It follows that $\sigma^{\perp_{>0}}\cap\mathsf D^{\leq 0}$ is a suspended subcategory closed under coproducts which contains $\sigma$ and therefore also the smallest such subcategory $\mathsf{Susp}(\sigma)$. Recall
Conversely, given an object $X$ in $\sigma^{\perp_{>0}}$, we can consider the canonical triangle 
$V\longrightarrow X\longrightarrow W \longrightarrow V[1]$ with respect to the t-structure  $(\mathsf{Susp}(\sigma),\sigma^{\perp_{\le 0}})$ from Theorem~\ref{AJS}. Then $W$ lies in $\sigma^{\perp_{\le 0}}$, but also in $\sigma^{\perp_{>0}}$ as so do $X$ and $V[1]$. Since $\sigma$ is a generator, we infer that $W=0$ and $X$ belongs to   $\mathsf{Susp}(\sigma)$.
%In particular, every object $X$ in $\sigma^{\perp_{>0}}$ admits a triangle $\xymatrix{V\ar[r]^{} & X\ar[r]^{}& W\ar[r]& V[1]}$ with $V$ in $\mathsf{Susp}(\sigma)$ and $W$ in $\sigma^{\perp_{\le 0}}$. But then $W$ also belongs to $\sigma^{\perp_{>0}}$, as so do $V$ and $X$.This shows that $W=0$ and $X$  belongs to $\mathsf{Susp}(\sigma)$. 
We have thus shown that $(\sigma^{\perp_{> 0}},\sigma^{\perp_{\le 0}})$ is a t-structure, and $\sigma$ is a silting object in $\mathsf{D}(A)$.

\begin{proposition}\cite[Proposition 4.2]{AMV1}\label{obj=cpx} A bounded complex of projective $A$-modules  is a silting object in $\mathsf{D}(A)$ if and only if it is a {silting complex}.
\end{proposition}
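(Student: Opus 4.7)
The ``if'' direction is exactly the content of the two paragraphs immediately preceding the statement, so I focus on the converse: assume $\sigma\in\mathsf K^b(\mathrm{Proj}(A))$ is a silting object in $\mathsf D(A)$, and verify (S1) and (S2).

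Condition (S1) is immediate from Proposition~\ref{properties silting}(1), which identifies the aisle $\sigma^{\perp_{>0}}$ with $\mathsf{Susp}(\sigma)$: the latter is closed under coproducts and contains $\sigma$, so $\sigma^{(I)}\in\sigma^{\perp_{>0}}$ for every set $I$.

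For (S2), the inclusion $\mathsf{thick}(\Add\sigma)\subseteq\mathsf K^b(\mathrm{Proj}(A))$ is clear. For the reverse, the crucial preliminary observation is that $\mathsf{thick}(\Add\sigma)$ is automatically closed under arbitrary coproducts: the class of $X\in\mathsf{thick}(\Add\sigma)$ such that $X^{(I)}\in\mathsf{thick}(\Add\sigma)$ for every set $I$ is readily checked to be a thick subcategory containing $\Add\sigma$, hence coincides with $\mathsf{thick}(\Add\sigma)$. Consequently, $\mathsf{thick}(\Add\sigma)$ is also closed under Milnor colimits of its own sequences. It therefore suffices to show $A\in\mathsf{thick}(\Add\sigma)$; then free modules $A^{(I)}$, arbitrary projectives (as their summands), and all bounded complexes of projectives (as finite iterated extensions of shifted projectives) lie in $\mathsf{thick}(\Add\sigma)$.

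The crux is therefore to place $A$ itself into $\mathsf{thick}(\Add\sigma)$, and this is where the bounded-projective structure of $\sigma$ is exploited. Say $\sigma$ is concentrated in degrees $[a,b]$. A direct computation at the cochain level (using that $\sigma$ is K-projective and $A$ is a stalk complex) shows $\Hom_{\mathsf D(A)}(\sigma,A[i])=0$ whenever $i>-a$, for the simple reason that $\sigma^{-i}=0$ in that range. Now $\sigma[a]$ is again a silting object---shifts of silting objects are silting---with silting aisle $\sigma[a]^{\perp_{>0}}=\sigma^{\perp_{>-a}}$, and the previous vanishing places $A$ in this shifted aisle. Applying Proposition~\ref{properties silting}(2) to $A$ with respect to the silting object $\sigma[a]$, the approximation triangle collapses to $A=V$, where $V$ is realised as a Milnor colimit of objects built from $\Add(\sigma[a])$ by finitely many cones of coproducts of shifts of $\sigma[a]$. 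Each such intermediate object lies in $\mathsf{thick}(\Add\sigma)$, because $\mathsf{thick}(\Add(\sigma[a]))=\mathsf{thick}(\Add\sigma)$ (thick closures are invariant under shifts). By the Milnor-colimit closure established above, $A=V\in\mathsf{thick}(\Add\sigma)$, as required. The main conceptual obstacle is precisely the step of producing $A\in\mathsf{thick}(\Add\sigma)$; I expect it to fail for unbounded silting objects, and the key device resolving it here is the passage to the shift $\sigma[a]$ that makes $A$ lie in the silting aisle outright.
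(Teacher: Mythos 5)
Your setup is right through the K-projectivity computation and the shift trick placing $A$ in the aisle of $\sigma[a]$, and your closure-under-$X\mapsto X^{(I)}$ observation for $\mathsf{thick}(\Add\sigma)$ is correct. But the inference ``Consequently, $\mathsf{thick}(\Add\sigma)$ is also closed under Milnor colimits of its own sequences'' is a genuine gap. What you have proved is that $X^{(I)}\in\mathsf{thick}(\Add\sigma)$ for every single $X\in\mathsf{thick}(\Add\sigma)$ and every set $I$; the Milnor colimit of a sequence $V_0\to V_1\to\cdots$ requires the coproduct $\coprod_n V_n$ of a \emph{varying} family of objects, which is a different and strictly stronger closure condition. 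It is in fact false in general: $\mathsf{thick}(\Add A)=\mathsf K^b(\mathrm{Proj}(A))$ itself is not closed under Milnor colimits of its own sequences when $A$ has infinite global dimension -- every module arises as such a Milnor colimit, yet modules of infinite projective dimension do not lie in $\mathsf K^b(\mathrm{Proj}(A))$. In your specific instance, the extension length of $V_n$ (the number of shifts of $\sigma$ used to build it) grows with $n$, so there is no way to see that $\coprod_n V_n$, and hence the Milnor colimit $A$, lands inside $\mathsf{thick}(\Add\sigma)$ rather than just inside the localising closure.

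The paper sidesteps this precisely because it replaces the infinite (Milnor colimit) construction by a \emph{finite} one. The silting t-structure $(\sigma^{\perp_{>0}},\sigma^{\perp_{\leq 0}})$ has a left adjacent co-t-structure $(\Ucal,\Vcal)$ by Proposition~\ref{silting heart}(2), giving a suspended TTF triple with $\mathsf D^{\leq -n}\subset\Vcal\subset\mathsf D^{\leq 0}$. Lemma~\ref{wei} then uses the co-t-structure approximation triangles, together with the vanishing of $\Hom(-,A[n+1])$ on $\Vcal$, to show that the iterated approximation of $A$ by the coheart terminates after finitely many steps: $A$ is coresolved by finitely many objects $C_0,\dots,C_{n-1},Z_n$ of the coheart $\Ucal[1]\cap\Vcal$, which equals $\Add(\sigma)$ by Proposition~\ref{properties silting}(3). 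A finite sequence of triangles keeps you inside $\mathsf{thick}(\Add\sigma)$ with no coproduct closure needed. To complete your argument along the paper's lines you would need to prove, as Lemma~\ref{wei} does, that the approximating sequence stabilises in finitely many steps because $\sigma$ is bounded; the co-t-structure and the boundedness hypothesis are exactly what make this termination argument work.
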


To prove the only-if part in Proposition~\ref{obj=cpx}, we  consider  a bounded complex of projectives  which, for simplicity, is assumed to
be concentrated between degrees $-n$ and $0$. If  $$\sigma:\:\cdots \longrightarrow 0\longrightarrow P_{-n}\longrightarrow \cdots\longrightarrow P_{-1}\longrightarrow P_0\longrightarrow 0 \longrightarrow\cdots $$ is a  silting object in $\mathsf{D}(A)$, then the corresponding suspended TTF triple $(\Ucal,\Vcal,\Wcal)$ satisfies  $$\mathsf D^{\leq -n}\subset \Vcal=\sigma^{\perp_{>0}}=\mathsf{Susp}(\sigma)\subset\mathsf D^{\leq 0}.$$
The next result, which goes back to \cite[Proposition 3.12]{Wei}, will prove that $\sigma$ then  satisfies condition (S2). Indeed, we are going to see that the ring $A$, viewed as a stalk complex concentrated in degree 0,  lies in the smallest triangulated subcategory of $\mathsf D(A)$ containing the coheart $\Ucal[1]\cap\Vcal$,  and the latter equals  $\Add(\sigma)$ by Proposition~\ref{properties silting}. 

\begin{lemma}\label{wei} Let  $(\Ucal,\Vcal,\Wcal)$ be a suspended   TTF triple such that  $\mathsf D^{\leq -n}\subset \Vcal\subset\mathsf D^{\leq 0}$ for some  $n>0$. 
Then $A$ is  finitely coresolved by objects of the coheart, i.e.~there is a finite sequence of triangles 
$$Z_i\longrightarrow C_i\longrightarrow Z_{i+1}\longrightarrow Z_i[1],\quad 0\le i<n,$$
where $A=Z_0$, and the objects  $C_0,\ldots, C_{n-1}, Z_n$  lie in the coheart $\Ccal=\Ucal[1]\cap\Vcal$.
%, and $Z_1,\ldots,Z_{n-1}$ belong to  $\Ucal[1]$. 
\end{lemma}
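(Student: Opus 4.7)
The plan is to construct the sequence of triangles by iterating the co-t-structure decomposition on the $Z_i$'s, and to verify at the end that $Z_n$ lies in $\Ccal$ via a cohomology-shift argument exploiting the hypothesis $\mathsf{D}^{\le -n}\subseteq\Vcal$.

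First I would record the key preliminary observation that $A\in\Ucal[1]$: since $\Vcal\subseteq\mathsf{D}^{\le 0}$, for every $V\in\Vcal$ one has $\Hom_{\T}(A,V[1])=H^1(V)=0$, hence $A[-1]\in{}^{\perp_0}\Vcal=\Ucal$. Then, setting $Z_0:=A$, I would iteratively apply the co-t-structure $(\Ucal,\Vcal)$: for each $0\le i<n$, the decomposition of $Z_i$ yields a triangle $U_i\to Z_i\to V_i\to U_i[1]$ with $U_i\in\Ucal$ and $V_i\in\Vcal$; rotating and setting $C_i:=V_i$, $Z_{i+1}:=U_i[1]$ produces the desired triangle. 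A routine induction shows $Z_i\in\Ucal[1]$ for all $i$ (base case from the preliminary observation, step from $U_i\in\Ucal$), so since $\Ucal[1]$ is closed under extensions, the triangle $Z_i\to C_i\to Z_{i+1}$ forces $C_i\in\Ucal[1]$, and combined with $V_i\in\Vcal$ this yields $C_i\in\Ucal[1]\cap\Vcal=\Ccal$ for every $i<n$.

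The crucial final step is showing $Z_n\in\Ccal$. Since $Z_n\in\Ucal[1]$ is already guaranteed, the task reduces to proving $Z_n\in\Vcal={}^{\perp_0}\Wcal$. The hypothesis $\mathsf{D}^{\le -n}\subseteq\Vcal$ is equivalent to $\Wcal\subseteq\mathsf{D}^{>-n}$, so $H^{-n}(W)=0$ for every $W\in\Wcal$; moreover $\Wcal[-k]\subseteq\Wcal$ for every $k\ge 0$ since $(\Vcal,\Wcal)$ is a t-structure. Fixing $W\in\Wcal$, for each $0\le i\le n-1$ I would apply $\Hom_{\T}(-,W[i-n])$ to the triangle $U_i\to Z_i\to V_i\to U_i[1]$; because $i-n\le -1$ and $i-n+1\le 0$, both shifts $W[i-n]$ and $W[i-n+1]$ still lie in $\Wcal$, so both $\Hom_{\T}(V_i,W[i-n])$ and $\Hom_{\T}(V_i,W[i-n+1])$ vanish. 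The long exact sequence then produces the isomorphism $\Hom_{\T}(Z_i,W[i-n])\cong\Hom_{\T}(U_i,W[i-n])=\Hom_{\T}(Z_{i+1},W[i-n+1])$, and concatenating these for $i=0,1,\dots,n-1$ yields
$$\Hom_{\T}(Z_n,W)\cong\Hom_{\T}(A,W[-n])=H^{-n}(W)=0.$$
Thus $Z_n\in{}^{\perp_0}\Wcal=\Vcal$, completing $Z_n\in\Ccal$.

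The hard part will be calibrating this final cohomology-shift computation: exactly $n$ iterations of the triangle yield exactly $n$ shifts in the chain of isomorphisms, carrying $\Hom_{\T}(Z_n,W)$ into the vanishing range $H^{-n}(W)=0$ guaranteed by $\Wcal\subseteq\mathsf{D}^{>-n}$. The iterative construction itself is routine once one notices $A\in\Ucal[1]$ from $\Vcal\subseteq\mathsf{D}^{\le 0}$; the real content lies in simultaneously exploiting both ingredients of the hypothesis $\mathsf{D}^{\le -n}\subseteq\Vcal\subseteq\mathsf{D}^{\le 0}$.
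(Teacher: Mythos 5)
Your construction of the triangles is the same as the paper's (the preliminary observation $A\in\Ucal[1]$, then iterated co-t-structure decomposition producing $Z_i\in\Ucal[1]$ and $C_i\in\Ccal$), but your verification that $Z_n\in\Ccal$ takes a genuinely different route. The paper constructs one extra triangle $Z_n\to C_n\to Z_{n+1}\to Z_n[1]$ and proves that its connecting map vanishes, via a chain $\Hom_\T(Z_{n+1},Z_n[1])\cong\Hom_\T(Z_{n+1},Z_{n-i}[i+1])\cong\cdots\cong\Hom_\T(Z_{n+1},A[n+1])=0$, so the triangle splits and $Z_n$ is a summand of $C_n\in\Ccal$; this exploits the co-t-structure $(\Ucal,\Vcal)$ together with the hypothesis $A[n]\in\Vcal$ and the fact that $\Ccal$ is closed under summands. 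You instead prove $Z_n\in\Vcal$ directly by showing $\Hom_\T(Z_n,\Wcal)=0$, using the t-structure $(\Vcal,\Wcal)$ and the shift-invariance $\Wcal[-k]\subseteq\Wcal$ to set up the chain $\Hom_\T(Z_n,W)\cong\cdots\cong\Hom_\T(A,W[-n])=H^{-n}(W)=0$, with the vanishing coming from $\Wcal\subseteq\mathsf D^{>-n}$. Both calculations are the ``same'' cohomology shift in different coordinates, but your argument avoids the auxiliary $(n+1)$-st triangle and the splitting step, reaching $Z_n\in\Vcal$ by pure orthogonality against $\Wcal$; the paper's version keeps the computation entirely in terms of the objects $Z_i$ and shows a concrete splitting. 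Both are correct and roughly equal in length; yours is arguably a touch cleaner since it never leaves the $n$ triangles stated in the lemma.
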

\begin{proof}
First of all, the inclusion $\mathsf D^{\leq -n}\subset \Vcal$ implies  that $A[n]$ lies in $\Vcal$, and the inclusion $\Vcal\subset\mathsf D^{\leq 0}$ implies that $A$ lies in $\Ucal[1]$.  
The co-t-structure  $(\Ucal,\Vcal)$ then yields a sequence of canonical triangles 
$$\xymatrix{Z_i\ar[r]^{}&C_i\ar[r]^{}&Z_{i+1}\ar[r]&Z_i[1]},\quad i\ge 0,$$
%$$\xymatrix{Z_1\ar[r]^{}&C_1\ar[r]^{}&Z_2\ar[r]&Z_1[1]}$$$$\xymatrix{Z_{n-1}\ar[r]^{}&{C_{n-1}}\ar[r]^{}&{Z_{n}}\ar[r]&Z_{n-1}[1]}$$$$\xymatrix{Z_n\ar[r]^{}&C_n\ar[r]^{}&Z_{n+1}\ar[r]&Z_n[1]}$$
% $$\begin{array}{c} R=Z_0\longrightarrow C_0\longrightarrow Z_1\longrightarrow Z_0[1]\\ Z_1\longrightarrow C_1\longrightarrow Z_2\longrightarrow Z_1[1]\\ ... \\ Z_{n-1}\longrightarrow C_{n-1}\longrightarrow Z_{n}\rightarrow Z_{n-1}[1]\\Z_{n}\longrightarrow C_{n}\longrightarrow Z_{n+1}\longrightarrow Z_{n}[1]\end{array}$$
where $A=Z_0$, the objects $Z_i$ belong to  $\Ucal[1]$, and the objects $C_i$  lie in the coheart. 
One checks that $$\Hom_\T(Z_{n+1},Z_n[1])\cong\Hom_\T(Z_{n+1},Z_{n-i}[i+1])$$ for all $1\le i\le n$, and in particular $\Hom_\T(Z_{n+1},Z_n[1])\cong\Hom_\T(Z_{n+1},A[n+1])=0$. This shows that the  $n$-th triangle splits, hence $Z_n$ lies in the coheart as well.
\end{proof}

%Conversely, if  a bounded complex $\sigma$ in $\mathrm{K}^b(\mathrm{Proj}(A))$ is a silting object, then $\sigma^{\perp_{>0}}$ must coincide with $\mathsf{Susp}(\sigma)$, and therefore it lies in the standard aisle. It follows from \cite[Proposition 3.12]{Wei} that $A$ occurs as a summand of an object in an iterated extension  of the form $ \Add\sigma[-\ell]\ast\Add\sigma[1-\ell]\ast\ldots\ast\Add\sigma[\ell-1]\ast\Add\sigma[\ell]$ for some ${\ell\ge 0}$ \blue{(check!!!)}. This shows that $\sigma$ satisfies (S2).\blue{can also use that $\Add\sigma$ is silting subcategory}

We will see in Example~\ref{unbounded} that silting objects in $\mathsf{D}(A)$  need not be bounded. As shown in \cite[Proposition 4.17]{PV}, a silting object  is a bounded complex of projectives, that is, a silting complex in the sense of Definition~\ref{silting complex}, if and only if the 
%\begin{proposition} A silting object $\sigma$ in $\mathsf D(A)$  if and only if the 
corresponding TTF triple has the following property.

\begin{definition}\label{interm}
%\begin{enumerate}
%\item 
A  t-structure $(\Vcal,\Wcal)$, or a suspended TTF triple $(\Ucal,\Vcal,\Wcal)$ 
%(respectively, a co-t-structure $(\Ucal,\Vcal)$ 
in $\mathsf D(A)$ is said to be \emph{intermediate} if there are integers $n\leq m$ such that $\mathsf D^{\leq n}\subseteq \Vcal\subseteq \mathsf D^{\leq m}.$
% \: (\text{respectively, }\,  K_{\leq a}\subseteq \Ucal\subseteq K_{\leq b}).$
%\item A t-structure $(\Vcal^{\leq 0},\Vcal^{\geq 0})$ is said to be \textbf{silting} if it is intermediate and there is a generator $\sigma$ of $D(A)$ such that  $\Vcal^{\leq 0}\cap {}^{\perp_0}(\Vcal^{\leq 0}[1]) = Add(\sigma).$ It is furthermore said to be \textbf{$n$-silting} if $D^{\leq -n+1}\subseteq \Vcal^{\leq 0}\subseteq D^{\leq 0}$.
%\end{enumerate}
\end{definition}

Observe that intermediate suspended TTF-triples are nondegenerate as so is the standard t-structure.
% Theorem~\ref{NSZ-bijection} then allows to 
We recover   the following result,  cf.~also \cite[Theorem 5.3]{Wei}. 
%The following theorem generalises the correspondence of (compact) silting complexes with t-structures and co-t-structures  in \cite{AI,KY}. 

\begin{theorem}\label{bijections general}\cite[Theorem 4.6]{AMV1}
There is a bijection between
\begin{enumerate}
\item[(i)] equivalence classes of (bounded) silting complexes in $D(A)$;
\item[(ii)] intermediate suspended TTF triples in  $\mathsf D(A)$.\end{enumerate}
\end{theorem}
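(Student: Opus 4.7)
The statement is obtained by restricting Theorem~\ref{NSZ-bijection} to the intermediate case, using Proposition~\ref{obj=cpx} to identify silting complexes with those silting objects in $\mathsf{D}(A)$ that happen to lie in $\mathsf{K}^b(\mathrm{Proj}(A))$. A silting complex $\sigma$ is sent to the suspended TTF triple $(\Ucal,\Vcal,\Wcal)$ with aisle $\Vcal=\sigma^{\perp_{>0}}$ (its left-adjacent co-t-structure existing by Proposition~\ref{silting heart}(2)); conversely, an intermediate TTF triple is sent to the silting object produced by the inverse of Theorem~\ref{NSZ-bijection}. The remaining task is to match the ``bounded complex of projectives'' condition on the silting side with the ``intermediate'' condition on the TTF side.

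\textbf{Forward direction.} For a silting complex $\sigma$ concentrated in degrees $[-n,0]$, Proposition~\ref{obj=cpx} produces a silting object whose aisle is $\Vcal=\mathsf{Susp}(\sigma)$ by Proposition~\ref{properties silting}(1). The containment $\Vcal\subseteq\mathsf{D}^{\leq 0}$ is immediate since $\mathsf{D}^{\leq 0}$ is a coproduct-closed aisle containing $\sigma$. For $\mathsf{D}^{\leq -n-1}\subseteq\Vcal$: given $X\in\mathsf{D}^{\leq -n-1}$ and $i>0$, K-projectivity of $\sigma$ identifies $\Hom_{\mathsf{D}(A)}(\sigma,X[i])$ with homotopy classes of chain maps, and after replacing $X[i]$ by a quasi-isomorphic complex concentrated in degrees $\leq -n-1-i$, every such chain map is forced to be zero for degree reasons.

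\textbf{Reverse direction.} Given an intermediate suspended TTF triple with $\mathsf{D}^{\leq -n}\subseteq\Vcal\subseteq\mathsf{D}^{\leq 0}$, it is automatically nondegenerate. Theorem~\ref{Bondarko}, applied using that $\mathsf{D}(A)$ satisfies Brown representability for the dual, combined with the existence of the left-adjacent co-t-structure, forces $(\Vcal,\Wcal)$ to be cosmashing with heart having enough projectives; this together with the sandwich between shifts of the standard aisle yields the generated-by-a-set hypothesis, so Theorem~\ref{NSZ-bijection} produces a silting object $T$ with $\Add(T)=\Ccal=\Ucal[1]\cap\Vcal$. Taking left orthogonals of $\mathsf{D}^{\leq -n}\subseteq\Vcal$ gives $\Ucal\subseteq\mathsf{D}^{>-n}$, whence $\Ccal\subseteq\mathsf{D}^{[-n,0]}$ and $T$ is bounded. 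Finally, Lemma~\ref{wei} provides a finite coresolution of $A$ by objects of $\Add(T)$; combined with the silting axioms, this forces $T$ to have finite projective dimension, so $T$ is quasi-isomorphic to some $\sigma\in\mathsf{K}^b(\mathrm{Proj}(A))$. Condition (S1) then follows from $\Vcal=\mathsf{Susp}(\sigma)$ being coproduct-closed, and (S2) from $\mathsf{K}^b(\mathrm{Proj}(A))=\mathsf{thick}(\Add A)\subseteq\mathsf{thick}(\Add T)$ via Lemma~\ref{wei}.

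\textbf{Main obstacle.} The delicate step is the final upgrade in the reverse direction: showing that the silting object $T$ furnished abstractly by Theorem~\ref{NSZ-bijection} is quasi-isomorphic to a bounded complex of projective modules. Boundedness of cohomology is automatic from the intermediate bounds, but deducing finite projective dimension genuinely exploits Lemma~\ref{wei}'s finite coresolution of $A$ by the coheart, translating this abstract finiteness into a concrete statement about $T$. A secondary subtlety is verifying the generated-by-a-set hypothesis, handled via Theorem~\ref{Bondarko} and the cosmashing property.
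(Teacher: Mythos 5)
Your overall plan — restrict Theorem~\ref{NSZ-bijection} to the intermediate case and match the ``bounded complex of projectives'' condition with ``intermediate'' — is the same as the paper's. However, there are genuine gaps in the reverse direction, which is where all the work is.

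First, the step from ``cosmashing with heart having enough projectives'' plus the intermediate sandwich to ``generated by a set'' is not justified. Theorem~\ref{Bondarko} gives exactly the cosmashing/enough-projectives conclusion (which you already know, since the left adjacent co-t-structure is part of the data of the TTF triple), but it says nothing about the triple being generated by a set, and I don't see how the sandwich alone produces a generating set either. This is the crux of the theorem. The paper handles it constructively via Lemma~\ref{wei}: $A$ admits a \emph{finite} coresolution by coheart objects $C_0,\dots,C_n$, one sets $T=\bigoplus_i C_i$, shows $\Add(T)=\Ccal$ by the precover argument used in the proof of Proposition~\ref{properties silting}(3), and then Lemma~\ref{generatingcoheart} gives $\Vcal=\Ccal^{\perp_{>0}}=T^{\perp_{>0}}$, which is a set-generated coaisle. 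Your route needs to incorporate this; Theorem~\ref{Bondarko} cannot substitute for it.

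Second, the claim ``taking left orthogonals of $\mathsf{D}^{\leq -n}\subseteq\Vcal$ gives $\Ucal\subseteq\mathsf{D}^{>-n}$'' is false. You have $\Ucal={}^{\perp_0}\Vcal\subseteq{}^{\perp_0}(\mathsf{D}^{\leq -n})$, but ${}^{\perp_0}(\mathsf{D}^{\leq -n})$ is \emph{not} $\mathsf{D}^{>-n}$ over a general ring (that would be the \emph{right} orthogonal): for instance over $\mathbb{Z}$, the object $\mathbb{Z}/2[n-1]\in\mathsf{D}^{>-n}$ has $\Hom(\mathbb{Z}/2[n-1],\mathbb{Z}/2[n])=\Ext^1_{\mathbb{Z}}(\mathbb{Z}/2,\mathbb{Z}/2)\neq 0$, so it does not lie in ${}^{\perp_0}(\mathsf{D}^{\leq -n})$. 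The conclusion $\Ccal\subseteq\mathsf{D}^{[-n,0]}$ is nevertheless correct, but it requires a real argument — e.g.\ for $X\in\Ccal$ one has $\Hom(X,M[k])=0$ for all modules $M$ and all $k>n$, and testing against injective $M$ shows $H^{-k}(X)=0$ for $k>n$. Finally, the assertion that Lemma~\ref{wei} ``forces $T$ to have finite projective dimension'' is not a proof; showing that the silting object attached to an intermediate triple is represented by a bounded complex of projectives is exactly \cite[Proposition 4.17]{PV}, which the paper cites explicitly rather than rederives, and which is where your argument should also point.
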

%\begin{proof} 
In order to deduce this statement from Theorem~\ref{NSZ-bijection}, we have to show that the bijection between silting objects and the suspended TTF triples 
%$(\Ucal,\Vcal,\Wcal)$ where the co-t-structure  $(\Ucal,\Vcal)$ is {generated by a set} and the t-structure $(\Vcal,\Wcal)$ is nondegenerate.
  considered there  
restricts to a bijection between (i) and (ii). This amounts to showing that every intermediate suspended  TTF-triple $(\Ucal,\Vcal,\Wcal)$  is  generated by a set. Recall from Lemma~\ref{wei} that $A$ is coresolved by finitely many objects  $C_0,\ldots, C_{n-1}, C_n$  from the coheart $\Ccal=\Ucal[1]\cap\Vcal$. The object $T=\bigoplus_{i=0}^n C_i$ then satisfies $\Add(T)=\Ccal$. 
This is shown as in the proof of Proposition~\ref{properties silting}. Now we infer from Lemma~\ref{generatingcoheart} 
that   $T$ is a silting complex corresponding to $(\Ucal,\Vcal,\Wcal)$  under our bijection.
%\end{proof}

\smallskip

In fact, intermediate suspended TTF triples in  $\mathsf D(A)$ are not only generated by  silting complexes, but also  by sets of compact objects. This ``finite type'' property  is inherited from the corresponding property of tilting modules. It shows that, even though our silting complexes need not be compact, their equivalence classes are determined by compact objects.

\begin{theorem}\label{finite type}\cite[Theorem 3.6]{MV} Every intermediate suspended TTF triple in  $\mathsf D(A)$ is compactly generated.
\end{theorem}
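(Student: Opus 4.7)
By Theorem~\ref{bijections general}, an intermediate suspended TTF triple $(\Ucal,\Vcal,\Wcal)$ corresponds to an equivalence class of silting complexes $\sigma \in \mathsf{K}^b(\mathrm{Proj}(A))$ with $\Vcal = \sigma^{\perp_{>0}}$. The plan is to produce a set $\Scal \subset \mathsf{D}(A)^c = \mathsf{K}^b(\mathrm{proj}(A))$ of compact objects such that $\Scal^{\perp_0} = \Vcal$. Equivalently, we want to replace the a priori non-compact test family $\{\sigma[i] : i > 0\}$ by a set of compact objects defining the same right orthogonal.

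First I would address the two-term case, where $\sigma$ is concentrated in two consecutive degrees. After an appropriate shift, $(\Vcal,\Wcal)$ is the HRS-tilt of a torsion pair in $\Modr A$ whose torsion class is $\mathrm{Gen}\,M$, with $M = H^0(\sigma)$ a silting module. The key input is the finite type property of tilting (and silting) modules: the class $\mathrm{Gen}\,M$ coincides with $\Scal_0^{\perp_0}$ inside $\Modr A$ for some set $\Scal_0$ of finitely presented modules of projective dimension at most one. Taking a two-term projective resolution of each member of $\Scal_0$ produces a set of compact objects in $\mathsf{K}^b(\mathrm{proj}(A))$; a direct $\Ext$-computation combined with the HRS description of $(\Vcal,\Wcal)$ then shows that this compact set cuts out $\Vcal$ inside $\mathsf{D}(A)$.

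For a silting complex of larger amplitude $m-n$, I would proceed by induction on $m-n$. Using Lemma~\ref{wei} one can extract from the coheart of the co-t-structure $(\Ucal,\Vcal)$ a two-term torsion pair realising $(\Vcal,\Wcal)$ as an HRS-tilt of a t-structure of strictly smaller amplitude; the inductive hypothesis applies to the latter, and one checks that compact generation is preserved by the tilting step. The main obstacle is precisely this propagation of finite type through the iteration: one must track compact generators at several cohomological levels at once and verify that their combined $\perp_0$-conditions in $\mathsf{D}(A)$ carve out exactly $\Vcal$, rather than a strictly larger class. In \cite{MV} this technical core is handled via a duality with cosilting complexes, which are pure-injective and therefore automatically yield compactly generated TTF triples, whose finite-type property can then be transferred back to the silting side.
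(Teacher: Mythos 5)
Your proposal does not match the paper's argument, and more importantly it has a genuine gap at each of the three stages you describe.

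The paper's proof (sketched right after the theorem statement) is a one-shot reduction: an $n$-term complex of projective $A$-modules is reinterpreted as a representation of the quiver $\mathbb A_n$ bound by paths of length two, hence as a module over an auxiliary ring; under this dictionary a silting complex becomes a genuine tilting module, and the conclusion follows from the theorem that tilting classes are always of finite type. There is no induction on amplitude and no passage through cosilting.

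Regarding the gaps in your version. First, for the two-term base case you invoke ``the finite type property of tilting (and silting) modules.'' For tilting modules of projective dimension one this is the Bazzoni--Herbera theorem, but for a general silting module $T = H^0(\sigma)$ (whose projective dimension need not be $\le 1$) the finite type of $\mathrm{Gen}(T)$ is not an independently available result: it is precisely the two-term instance of the theorem you are trying to prove, and the known proofs rely on the same representation-theoretic reduction. So the base case as stated is circular. Second, you acknowledge that the inductive step --- peeling off one cohomological degree at a time via Lemma~\ref{wei} and HRS-tilting, while tracking compact generators across levels --- is exactly where the difficulty lies, and you do not supply an argument that the iterated $\perp_0$-conditions carve out $\Vcal$ rather than a larger class. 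That is not a minor technicality to defer: it is the entire content of the theorem. Third, and most concretely, the final sentence misdescribes \cite{MV}. Cosilting complexes are indeed pure-injective (Proposition~\ref{FJ}), but pure-injectivity does \emph{not} imply that the corresponding TTF triple is compactly generated: Example~\ref{notcoft} exhibits a pure-injective cosilting complex whose t-structure fails to be compactly generated (equivalently, is not of cofinite type). The silting--cosilting duality of Theorem~\ref{T:siltingduality} runs in the opposite logical direction to what you suggest: its proof takes Theorem~\ref{finite type} as an input, so it cannot be used to establish it.
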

The proof of Theorem~\ref{finite type} uses the fact that one can view  an $n$-term  complex  as a representation in $\Modr A$ of the linearly oriented  Dynkin quiver $\mathbb A_n:1\to 2\to\ldots\to n$ bound by the relations given by all paths of length two, 
hence as a module over a quotient of the path algebra of $\mathbb A_n$. This allows to
 interpret silting complexes as  tilting modules in suitable module categories. The theorem now follows from an important result in tilting theory 
 %due to Bazzoni, Eklof, Herbera,  J.~\v{S}\v{t}ovi\v{c}ek, Trlifaj 
 stating
that  every tilting class is the Ext-orthogonal class  of a set of finitely presented modules of bounded projective dimension.

\medskip

We have seen that silting complexes parametrise  torsion pairs in $\mathsf D(A)$. There are similar results in further triangulated categories related to $A$. For example, combining Remark~\ref{commentstodef}(1) with Corollary~\ref{msss}, one   obtains a bijection between equivalence classes of silting complexes in $\mathsf D(A)$ and bounded co-t-structures in $\mathsf{K}^b(\mathrm{Proj}(A))$ whose coheart has an additive generator. As shown in \cite[Remark 4.7]{AMV1}, the bounded co-t-structure associated to a silting complex $\sigma$ under this bijection is precisely the restriction to $\mathsf{K}^b(\mathrm{Proj}(A))$ of 
 the co-t-structure $(\Ucal,\Vcal)$ in the intermediate suspended  TTF triple $(\Ucal,\Vcal,\Wcal)$ of Theorem~\ref{bijections general}.

Assume now that $\sigma$ is a compact silting complex over  a finite dimensional algebra $A$. Then  the co-t-structure $(\Ucal,\Vcal)$ also
restricts to a  bounded co-$t$-structure in  $\mathsf{K}^b(\mathrm{proj}(A))$,  and  the t-structure $(\Vcal,\Wcal)$
restricts to  a bounded $t$-structure in $\mathsf D^b(\fdmod A)$. 
Moreover,  by Proposition~\ref{silting heart},  the heart $\Hcal_\sigma$ 
%of $(\Vcal,\Wcal)$ has a  projective generator $H^0_\sigma(\sigma)$ by Proposition~\ref{silting heart}, which is small in this case, and therefore $\Hcal_\sigma$ 
is equivalent to the category of modules over the finite dimensional algebra $\End_{\mathsf D(A)}(\sigma)$. It follows that the heart of the bounded $t$-structure in $\mathsf D^b(\modr A)$ is a \emph{length category}, i.~e.~a skeletally small abelian category in which every object admits a finite filtration by simple objects. 
We thus recover the following  result due to K\"onig and Yang \cite{KY}, and to Keller and Nicol\'as \cite{KN,KN2}.

\begin{theorem}\label{thm:ky-bijection} 
Over  a finite dimensional algebra $A$, there are bijections between 
\begin{enumerate}
\item
equivalence classes of compact silting complexes in $\mathsf D(A)$;
\item
bounded co-$t$-structures in  $\mathsf{K}^b(\mathrm{proj}(A))$;
\item
bounded $t$-structures in $\mathsf D^b(\modr A)$ whose heart is a length category. 
\end{enumerate}
\end{theorem}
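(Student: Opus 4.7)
The plan is to derive all three bijections from machinery assembled earlier in the excerpt, carefully tracking what ``bounded'' and ``length heart'' translate to on each side. First I would handle the equivalence (1) $\Leftrightarrow$ (2). By Remark~\ref{commentstodef}(2), over a finite-dimensional algebra $A$ the silting subcategories of $\mathsf{K}^b(\mathrm{proj}(A))$ are exactly the subcategories $\add\sigma$ where $\sigma$ ranges over basic compact silting complexes in $\mathsf D(A)$, and equivalence of silting complexes matches the equality of additive closures. Feeding this identification into Corollary~\ref{msss} immediately yields a bijection between equivalence classes of compact silting complexes and bounded co-$t$-structures in $\mathsf{K}^b(\mathrm{proj}(A))$, since $\add\sigma$ is precisely the coheart of the associated co-$t$-structure.

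Next I would prove (1) $\Leftrightarrow$ (3) by restricting the intermediate suspended TTF triple $(\Ucal,\Vcal,\Wcal)$ provided by Theorem~\ref{bijections general}. The $t$-structure $(\Vcal,\Wcal)$ is intermediate, so its truncation functors preserve $\mathsf D^b(\modr A)$ and it restricts to a bounded $t$-structure there. Proposition~\ref{silting heart}(1) identifies the heart with the module category over $B=\End_{\mathsf D(A)}(\sigma)$, and since $\sigma$ is compact over a finite-dimensional algebra $B$ is itself finite-dimensional, so the restriction to $\mathsf D^b(\modr A)$ yields a length heart. For the inverse, starting from a bounded $t$-structure $(\Vcal',\Wcal')$ with length heart $\Hcal'$ on $\mathsf D^b(\modr A)$, I would take a projective generator $P$ of $\Hcal'$, extend $(\Vcal',\Wcal')$ to an intermediate cosmashing $t$-structure on $\mathsf D(A)$ by closing $\Vcal'$ under coproducts (using Theorem~\ref{AJS} or a Brown representability argument as in Proposition~\ref{silting heart}(2)), and then apply Theorem~\ref{NSZ-bijection} to obtain a silting object $\sigma\in\mathsf D(A)$ with $H^0_\sigma(\sigma)\cong P$. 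Compactness of $\sigma$ would follow from the intermediateness of the extended $t$-structure via Proposition~\ref{obj=cpx} together with the fact that $\End_{\mathsf D(A)}(\sigma)$ is finite-dimensional.

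The main obstacle lies in the reverse map (3) $\to$ (1): I need to produce a \emph{projective} generator of the length heart $\Hcal'$, and a priori length categories need not have enough projectives. The standard way to overcome this, due to K\"onig--Yang and Keller--Nicol\'as, is to argue that boundedness of the $t$-structure forces $\Hcal'$ to have only finitely many isomorphism classes of simple objects (since the Grothendieck group of $\mathsf D^b(\modr A)$ has finite rank) and then to lift these simples to a ``simple-minded collection'' whose Koszul-type dual yields the desired compact silting complex. An equivalent and perhaps cleaner route, given what is already assembled, is to exploit the bijection (1) $\Leftrightarrow$ (2) already established and to build (2) $\Leftrightarrow$ (3) directly by using Theorem~\ref{coBondarko} to pass from a bounded co-$t$-structure on $\mathsf{K}^b(\mathrm{proj}(A))$ to its right adjacent $t$-structure, then restricting to $\mathsf D^b(\modr A)$; verifying that this construction is mutually inverse to the HRS-style restriction from (1) to (3) reduces (3) $\to$ (1) to (3) $\to$ (2) $\to$ (1) and bypasses the direct construction of $P$.
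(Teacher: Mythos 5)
Your treatment of the bijection $(1)\leftrightarrow(2)$ is correct and coincides with the paper's: Remark~\ref{commentstodef}(2) identifies silting subcategories of $\mathsf{K}^b(\mathrm{proj}(A))$ with additive closures of compact silting complexes, and Corollary~\ref{msss} does the rest. Your forward map $(1)\to(3)$ likewise matches the paper's discussion preceding the theorem: one restricts the $t$-structure of the intermediate suspended TTF triple from $\mathsf D(A)$ to $\mathsf D^b(\modr A)$ and invokes Proposition~\ref{silting heart}(1) together with finite-dimensionality of $B=\End_{\mathsf D(A)}(\sigma)$ to get a length heart. Be aware, though, that the paper itself does not prove bijectivity; it only constructs these forward maps and then quotes the result from K\"onig--Yang and Keller--Nicol\'as. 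So on the easy side you are faithfully reproducing what the paper shows.

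The genuine gap is exactly where you locate it: producing the inverse $(3)\to(1)$, i.e.\ showing that every bounded $t$-structure with length heart in $\mathsf D^b(\modr A)$ arises from a compact silting complex. Your middle paragraph is circular on this point --- you begin by ``taking a projective generator $P$ of $\Hcal'$'', but the existence of such a $P$ is precisely what must be proved, since a length category need not have enough projectives; and one cannot then feed $P$ into Theorem~\ref{NSZ-bijection}, whose hypothesis~(ii) presupposes the heart has a projective generator. Your third paragraph correctly identifies the issue, but neither proposed repair closes the gap as stated. Route~(a), lifting the finitely many simples of $\Hcal'$ to a simple-minded collection and recovering a silting complex by a Koszul-type duality, is indeed the K\"onig--Yang strategy, but it is a substantial piece of work that the excerpt does not supply and you do not develop. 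Route~(b) does not go through: Theorem~\ref{coBondarko} applies to triangulated categories satisfying Brown representability, which $\mathsf{K}^b(\mathrm{proj}(A))$ does not (it lacks arbitrary coproducts), so one cannot invoke it directly to manufacture a right adjacent $t$-structure inside the compact/bounded world. Passing to $\mathsf D(A)$ to get Brown representability and then restricting back is exactly what is being constructed in the forward direction; showing that this restriction hits \emph{every} bounded $t$-structure with length heart on $\mathsf D^b(\modr A)$ is the content of the cited theorem, not a consequence of Bondarko adjacency. In short, your forward arguments are fine and match the paper; the reverse direction, which the paper outsources to \cite{KY,KN,KN2}, is not actually established by either of your alternatives.
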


In fact, the bounded t-structure in (3) is determined by its  heart, which in turn, being a length category, is determined by a set of pairwise non-isomorphic  simple objects. These objects form a \emph{simple minded collection} in $\mathsf D^b(\modr A)$, and in  \cite{KY,KN}   a further bijection is established with 
\begin{enumerate}
\item[(4)] {\it isomorphism classes of {simple minded collections} in} $\mathsf D^b(\modr A)$.
\end{enumerate}
For further details we refer to \cite{SuYang} and to the survey \cite{BY}.

\subsection{Two-term silting complexes}

Let $P_{-1}\stackrel{\sigma}{\to} P_0$ be   a two-term complex in $\mathsf K^b(\mathrm{Proj}(A))$.
%Set $T:=H^0(\sigma)$ and consider $\xymatrix{ P_{-1}\ar[r]^{\sigma} & P_0\ar[r]^{\pi} & T \ar[r] & 0}$. 
An object $X\in \Dcal^{\le 0}$ lies in $\sigma^{\perp_{>0}}$ if and only if all maps of complexes $\sigma\to X[1]$ are null-homotopic. This is expressed by the diagram below:    every map $f$ can be written as $f=s_0\,\sigma+d_{-1}\,s_{-1}$.
% $f
\[
%\begin{array}{lcl}\\ &&	
\xymatrix{
	& 0\ar[r]&P_{-1} \ar[r]^{\sigma}\ar[d]^f\ar@{-->}[ld]_{s_{-1}} &
	P_0 \ar[r]\ar@{-->}[ld]_{s_{0}}  & 0\ar[r]& \dots \\
	\ar[r] & X_{-1} \ar[r]^{d_{-1}} & X_0 \ar[r]\ar[d]^{\pi} & 0 \ar[r] & 0 \ar[r] &  \dots \\
	&& H^0(X) %\\
	%&  \Longleftrightarrow && \pi f & \mbox{factors} & \mbox{through} & \sigma \\
}\]
It is easy to see that this means precisely that the map $\pi f$ factors through $\sigma$, or equivalently, every map $h:P_{-1}\to H^0(X)$  factors through $\sigma$. In other words,  $X$ lies in $\sigma^{\perp_{>0}}$ if and only if
 $H^0(X)$ belongs to the class $\Dcal_{\sigma}:=\{M\in \Modr A\mid \Hom(\sigma,M) \ \mbox{ is surjective}\}.$  
 %Thus $\Dcal_{\sigma}=\sigma^{\perp_{>0}}\cap \Mod R$. 

Consider now $T:=H^0(\sigma)$ with the exact sequence $\xymatrix{ P_{-1}\ar[r]^{\sigma} & P_0\ar[r] & T \ar[r] & 0}$, and denote by
\begin{itemize}
\item  $\mathsf{Gen}(T)$ the class of all modules that are epimorphic images of modules in $\Add(T)$;
\item $\Fcal$ the class of all $A$-modules $X$ such that $\Hom_A(T,X)=0$.
\end{itemize}
Using the observations above, one can show 
the following result, which is a non-compact version of  \cite[Theorem 2.10]{HKM}.%\begin{lemma}\begin{enumerate}\item$\Dcal_{\sigma}\subset T^{\perp_1}$ with equality if $\sigma$ is an injective map. \itemIf $\sigma$ is a silting complex, then $\Dcal_{\sigma}$ is a torsion class, and \\$\mathsf Gen T:=\{M\in \Mod R\mid\exists \mbox{ epi  $T^{(I)}\twoheadrightarrow M$ for some set $I$}\}$ coincides with $\Dcal_{\sigma}$. \end{enumerate}\end{lemma}

\begin{theorem}\cite[Theorem 4.9]{AMV1}\label{HKM}
The following statements are equivalent for a two-term complex $\sigma$ in $\mathsf K^b(\mathrm{Proj}(A))$ with  $T=H^0(\sigma)$.
\begin{enumerate}
\item
$\sigma$ is a silting complex; 
\item
$\mathcal D_{\sigma}=\mathsf{Gen}(T)$;
\item
$(\mathcal D_{\sigma},\Fcal)$ is a torsion pair in $\Modr A$. 
\end{enumerate}
Under these conditions, the HRS-tilt of the torsion pair $(\mathcal D_{\sigma}, \Fcal)$ is precisely
$(\sigma^{\perp_{>0}},\sigma^{\perp_{\le 0}})$. 
\end{theorem}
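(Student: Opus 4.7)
The plan is to establish (1) $\Rightarrow$ (3), (2) $\Leftrightarrow$ (3), and (3) $\Rightarrow$ (1), with the HRS-tilt identification obtained in the last step. Two preliminary identifications, both derived from the stupid-filtration triangle $P_0 \to \sigma \to P_{-1}[1] \to P_0[1]$ via standard long exact sequence calculations, are the key translation tools: (a) for $X \in \Dcal^{\le 0}$, one has $X \in \sigma^{\perp_{>0}}$ if and only if $H^0(X) \in \Dcal_\sigma$; (b) for $X \in \Dcal^{\ge 0}$, one has $X \in \sigma^{\perp_{\le 0}}$ if and only if $H^0(X) \in \Fcal$. In particular, for a module $M$ viewed as a stalk complex, $M \in \sigma^{\perp_{>0}} \Leftrightarrow M \in \Dcal_\sigma$ and $M \in \sigma^{\perp_{\le 0}} \Leftrightarrow M \in \Fcal$. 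A short diagram chase using the projectivity of $P_{-1}$ and $P_0$ further shows that $\Dcal_\sigma$ is always closed under epimorphic images, coproducts, and extensions, so $(\Dcal_\sigma, \Dcal_\sigma^{\perp_0})$ is automatically a torsion pair in $\Modr A$.

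For (1) $\Rightarrow$ (3), apply the silting t-structure's canonical truncation triangle $V \to M \to W \to V[1]$ to a module $M$. By Proposition~\ref{properties silting}, $\sigma^{\perp_{>0}} = \mathsf{Susp}(\sigma) \subseteq \Dcal^{\le 0}$, so the long exact sequence in cohomology reduces to $0 \to H^{-1}(W) \to H^0(V) \to M \to H^0(W) \to 0$ with $H^i(W) = 0$ for $i \ge 1$. Identity (a) yields $H^0(V) \in \Dcal_\sigma$; computing $\Hom_{\D(A)}(\sigma, W)$ and $\Hom_{\D(A)}(\sigma, W[-1])$ via the filtration triangle and using $W \in \sigma^{\perp_{\le 0}}$ gives $H^0(W) \in \Fcal$ and $H^{-1}(W) \in \Dcal_\sigma \cap \Fcal$. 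Viewed as a stalk complex, $H^{-1}(W)$ then lies in $\sigma^{\perp_{>0}} \cap \sigma^{\perp_{\le 0}}$, which vanishes by the t-structure orthogonality $\Hom(\sigma^{\perp_{>0}}, \sigma^{\perp_{\le 0}}) = 0$; hence $H^{-1}(W) = 0$, producing the required short exact sequence. The orthogonality $\Hom_A(\Dcal_\sigma, \Fcal) = 0$ likewise follows from the same t-structure orthogonality applied to stalks.

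The equivalence (2) $\Leftrightarrow$ (3) is the standard trace argument. Condition (3) forces $T \in \Dcal_\sigma$, since otherwise its torsion-free quotient would yield a nonzero map $T \to \Fcal$ contradicting $\Fcal = T^{\perp_0}$; together with the closure properties of $\Dcal_\sigma$ this gives $\mathsf{Gen}(T) \subseteq \Dcal_\sigma$. For the reverse, given $M \in \Dcal_\sigma$, the cokernel of the canonical evaluation $T^{(\Hom_A(T,M))} \to M$ lies in $\Dcal_\sigma \cap \Fcal = 0$, so $M \in \mathsf{Gen}(T)$. The converse (2) $\Rightarrow$ (3) is immediate since $\Dcal_\sigma = \mathsf{Gen}(T)$ yields $\Dcal_\sigma^{\perp_0} = T^{\perp_0} = \Fcal$.

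Finally, for (3) $\Rightarrow$ (1) together with the HRS-tilt assertion, form the HRS-tilt $(\Vcal', \Wcal')$ of $(\Dcal_\sigma, \Fcal)$ from Example~\ref{standard}(2); it is automatically a t-structure. The inclusions $\Vcal' \subseteq \sigma^{\perp_{>0}}$ and $\Wcal' \subseteq \sigma^{\perp_{\le 0}}$ follow directly from (a) and (b). For the reverse $\sigma^{\perp_{>0}} \subseteq \Vcal'$, apply the HRS decomposition $V' \to X \to W' \to V'[1]$ to $X \in \sigma^{\perp_{>0}}$; chasing $\Hom_{\D(A)}(\sigma, -[j])$ for $j > 0$ forces $W' \in \sigma^{\perp_{>0}} \cap \Wcal'$. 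An induction on $j \ge 0$, where at each step the long exact sequence from the filtration triangle gives $H^j(W') \in \Dcal_\sigma$ (from $\Hom(\sigma, W'[j+1]) = 0$) to be combined with $H^j(W') \in \Fcal$ (inherited from the previous step, with $H^0(W') \in \Fcal$ being the input from $W' \in \Wcal'$), yields $H^j(W') \in \Dcal_\sigma \cap \Fcal = 0$ for all $j \ge 0$, hence $W' = 0$ and $X \in \Vcal'$. The coaisle inclusion $\sigma^{\perp_{\le 0}} \subseteq \Wcal'$ is dual. This identifies the HRS-tilt with $(\sigma^{\perp_{>0}}, \sigma^{\perp_{\le 0}})$, which is thus a t-structure, proving (1) and the final statement simultaneously. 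The main technical obstacle is the inductive cohomology vanishing in this last step, for which the preliminary identities together with $\Dcal_\sigma \cap \Fcal = 0$ are indispensable.
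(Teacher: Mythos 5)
Your overall route matches the spirit of the argument in \cite{AMV1} that the paper is citing: the two translation identities (a) and (b) are exactly the diagram-chase observation the paper spells out before stating the theorem (the paper gives (a) explicitly; (b) is the obvious companion), and the strategy of proving (1)$\Rightarrow$(3) via the silting truncation of a stalk complex, (2)$\Leftrightarrow$(3) via the trace of $T$, and (3)$\Rightarrow$(1) by identifying the HRS-tilt with $(\sigma^{\perp_{>0}},\sigma^{\perp_{\le 0}})$ is the natural way to organise the proof. The inductive cohomology-vanishing in the last step is fine; note only that the ``dual'' argument for $\sigma^{\perp_{\le 0}}\subseteq\Wcal'$ is not literally dual (because $\sigma$ sits asymmetrically in degrees $-1,0$), but a downward induction using $V'\in\Dcal^{\le 0}\cap\sigma^{\perp_{>0}}\cap\sigma^{\perp_{\le 0}}$ works in the same way.

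There is, however, one genuine error in the preliminary paragraph: the assertion that $\Dcal_\sigma$ is \emph{always} closed under coproducts, and hence that $(\Dcal_\sigma,\Dcal_\sigma^{\perp_0})$ is automatically a torsion pair, is false. The class $\Dcal_\sigma$ is always closed under epimorphic images, extensions, and \emph{products}, but closure under (infinite) coproducts is not automatic; it is precisely the non-trivial condition that distinguishes silting two-term complexes from arbitrary ones. Indeed, writing $0\to K\to P_{-1}\to P_0\to T\to 0$, one has $\Dcal_\sigma\subseteq\Ker\Ext^1_A(T,-)$, and when $\sigma$ is injective this is an equality; taking $A=\mathbb Z$ and $\sigma$ a projective resolution of $\mathbb Q$ gives $\Dcal_\sigma=$ the class of cotorsion abelian groups, which fails to be closed under infinite direct sums (e.g.\ $\bigoplus_{n}\mathbb Z/p^n$ is not cotorsion although each summand is). If the claim were true, the assertion $(2)\Leftrightarrow(3)$ would reduce to checking $\Dcal_\sigma^{\perp_0}=\Fcal$, which is not the content of the theorem. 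The step where you actually invoke coproduct-closure, namely $(2)\Rightarrow(3)$, is easily repaired: under the hypothesis $\Dcal_\sigma=\mathsf{Gen}(T)$, closure under coproducts and quotients holds because $\mathsf{Gen}(T)$ has these closures by definition, and closure under extensions holds because $\Dcal_\sigma$ always has it, so $\mathsf{Gen}(T)=\Dcal_\sigma$ is a torsion class with $\Dcal_\sigma^{\perp_0}=T^{\perp_0}=\Fcal$. So the structure of your proof survives, but the preliminary ``automatic torsion pair'' claim should be deleted and the correct justification inserted at the point where it is used.
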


\begin{definition}\cite{AMV1}
A module $T$ is said to be a \emph{silting module} if it admits a projective presentation $\xymatrix{P_{-1} \ar[r]^{\sigma} & P_0}$ such that 
$\mathcal D_{\sigma}=\mathsf{Gen}(T)$. 
%If $\sigma$ can be chosen to be an injective map, then $T$ is said to be \emph{tilting} (of projective dimension at most one).
\end{definition}

The assignment $\sigma\mapsto H^0(\sigma)$ defines, up to equivalence, a bijection between two-term silting complexes in $\mathsf D(A)$ and  silting modules. The latter parametrise certain torsion pairs in $\Modr A$ and are intimately related with ring theoretic localisations of $A$. We refer to \cite{AMV1,AMV2,abundance} for details. 

If $A$ is a finite-dimensional algebra over a field, then the finite dimensional silting $A$-modules are precisely the support $\tau$-tilting modules introduced in
\cite{AIR}, and they parametrise certain torsion pairs in $\modr A$. Recall that a subcategory $\Xcal$ of $\modr A$ is {\it functorially finite} if every module in $\modr A$ has an $\Xcal$-preenvelope and an $\Xcal$-precover.

\begin{theorem}\cite[Theorem 3.2]{AIR}\label{AIRbij}
Let  $A$ be a finite-dimensional algebra over a field. There are bijections between
\begin{enumerate}
\item[(i)] equivalence classes of compact  two-term silting complexes in $\mathsf D(A)$;
\item[(ii)]  isomorphism classes of basic support $\tau$-tilting  $A$-modules;
\item[(iii)] functorially finite torsion classes  in $\modr A$.
\end{enumerate}
\end{theorem}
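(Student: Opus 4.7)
The plan is to route the three bijections through (ii), treating (i)$\leftrightarrow$(ii) as a restriction of the already-established correspondence $\sigma \mapsto H^0(\sigma)$ between two-term silting complexes and silting modules, and handling (ii)$\leftrightarrow$(iii) via the torsion class $T \mapsto \mathsf{Gen}(T)$. The essential difficulty lies in the reverse direction (iii)$\to$(ii), which requires constructing a support $\tau$-tilting module from a functorially finite torsion class.

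For (i)$\leftrightarrow$(ii), I would observe that $\sigma$ being compact means $\sigma \in \mathsf{K}^b(\mathrm{proj}(A))$, equivalently that $H^0(\sigma)$ is finite-dimensional, so the bijection $\sigma \mapsto H^0(\sigma)$ recorded just after Theorem~\ref{HKM} restricts to give (i)$\leftrightarrow$(ii), using the paper's identification of finite-dimensional silting modules with support $\tau$-tilting modules; basicness matches via Krull--Schmidt, cf.~Remark~\ref{commentstodef}(2). For (ii)$\to$(iii), given a support $\tau$-tilting module $T$, Theorem~\ref{HKM} provides a torsion pair $(\mathsf{Gen}(T),\Fcal)$ in $\Modr A$, which restricts to $\modr A$ because $T$ is finite-dimensional. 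Contravariant finiteness of $\mathsf{Gen}(T)$ is automatic via the torsion submodule functor: any map from an object of $\mathsf{Gen}(T)$ into $M$ lands inside $t(M)$, since $\Hom_A(\mathsf{Gen}(T),\Fcal) = 0$. Covariant finiteness uses a pushout along the projective presentation $P_{-1}\xrightarrow{\sigma} P_0 \to T \to 0$: given $M \in \modr A$, the universal map $P_{-1}^{(I)} \to M$ with $I = \Hom_A(P_{-1}, M)$ can be pushed out along $\sigma^{(I)}$ to produce $M \to \widetilde{M}$ with $\widetilde{M} \in \mathcal D_\sigma = \mathsf{Gen}(T)$, and this is verified to be a left $\mathsf{Gen}(T)$-approximation via the characterisation of $\mathcal D_\sigma$ in Theorem~\ref{HKM}.

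The main obstacle is (iii)$\to$(ii). I would follow the strategy of \cite{AIR} and extract a canonical support $\tau$-tilting module from the Ext-projective objects of $\Xcal$. For each indecomposable projective $P_i \in \modr A$, covariant finiteness of $\Xcal$ yields a left $\Xcal$-approximation $f_i : P_i \to X_i$, and a long-exact-$\Ext$ argument applied to the induced short exact sequence shows $X_i$ is Ext-projective in $\Xcal$, i.e.~$\Ext_A^1(X_i,\Xcal) = 0$. Setting $T_\Xcal$ to be the basic direct sum of the indecomposable summands of $\bigoplus_i X_i$, the Auslander--Reiten formula translates Ext-projectivity into $\tau$-rigidity of $T_\Xcal$, while the images of the $f_i$ show that every object of $\Xcal$, being a quotient of some projective, is a quotient of a module in $\mathrm{add}(T_\Xcal)$; hence $\mathsf{Gen}(T_\Xcal) = \Xcal$. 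The indecomposable projectives $P_i$ for which $f_i$ vanishes encode the support condition making $T_\Xcal$ support $\tau$-tilting rather than $\tau$-tilting. Finally, mutual inverseness reduces to showing that a basic support $\tau$-tilting module is determined by its torsion class, which follows from Krull--Schmidt together with maximality of $T$ among $\tau$-rigid Ext-projectives in $\mathsf{Gen}(T)$.
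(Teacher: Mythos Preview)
The paper does not supply its own proof of this statement: Theorem~\ref{AIRbij} is quoted directly from \cite[Theorem 3.2]{AIR} without argument, and the surrounding text only uses the result. So there is no proof in the paper to compare your attempt against.

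That said, your sketch follows the strategy of the original source \cite{AIR} fairly closely, and the (i)$\leftrightarrow$(ii) part is indeed an immediate restriction of the material surrounding Theorem~\ref{HKM} in this survey. Two points of caution on your (iii)$\to$(ii): first, to conclude that $T_\Xcal$ is support $\tau$-tilting (rather than merely $\tau$-rigid with $\mathsf{Gen}(T_\Xcal)=\Xcal$) you still need the count of indecomposable summands, or equivalently maximality among $\tau$-rigid modules; in \cite{AIR} this is handled via the characterisation of support $\tau$-tilting pairs, and your remark about the projectives with $f_i=0$ gestures at this but does not quite close the gap. Second, for covariant finiteness in (ii)$\to$(iii) you should note that $I=\Hom_A(P_{-1},M)$ is finite since $A$ is finite-dimensional, so the pushout stays in $\modr A$.
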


%\[\xymatrix{\{\mbox{2-silting complexes}\}/_{\sim} \ar[dd]_{1-1}^{H^0} \ar@{<->}[rr]^{1-1} && 	 \{ \mbox{\tiny$t$-structures $(\Vcal,\Wcal)$ with $(\ast)$ and $\Dcal^{\le -1}\subseteq \Vcal \subseteq \Dcal^{\le 0}$}\}\ar@<-4ex>[dd]_{1-1} 	 & \ni(\Vcal,\Wcal)\ar@{|->}[dd] \\ & & \\\{\mbox{silting modules}\}/_{\sim}  \ar[rr]^{\mathsf Gen}_{1-1} &&  \{ \mbox{silting classes} \ar@<-4ex>[uu]_{HRS}\} &\ni (\Vcal\cap \Mod R) }\]

The partial order on functorially finite torsion classes in $\modr A$ given by inclusion induces a partial order on the isomorphism classes of basic support $\tau$-tilting modules. The poset  (s$\tau$-tilt$A$,\,$\ge$) obtained in this way encodes mutation by \cite[Corollary 2.34]{AIR}, and  it corresponds to a full subposet of $(\mathsf{silt}\,\mathsf{K}^b(\mathrm{proj}(A)),\ge)$ as a consequence of the interplay between  torsion pairs and their HRS-tilts, cf.~Theorem~\ref{HKM}.

 In \cite{Asai},  the  poset (s$\tau$-tilt$A$,\,$\ge$) is described in terms of certain collections of pairwise Hom-orthogonal \emph{bricks}, i.e.~modules whose endomorphism ring is a division ring. Such collections, called \emph{left finite semibricks}, can be regarded as a two-term version of the simple-minded collections from Theorem~\ref{thm:ky-bijection}. 
 
 Combinatorial descriptions of (s$\tau$-tilt$A$,\,$\ge$) are available for certain classes of algebras, e.g.~for Nakayama algebras, or for preprojective algebras of Dynkin type, see  \cite{Adachi,Adachi 2,  Asai2, BuM1, Kase, Ma, Mizuno, IZh}.
   
\begin{theorem}\cite{DIJ,AMV4}\label{DIJ} Let  $A$ be a finite-dimensional algebra over a field. The following statements are equivalent.
\begin{enumerate}
\item  The poset \mbox{\rm (s$\tau$-tilt$A$,\,$\ge$)}  is finite.
\item Every torsion class in $\modr A$ is functorially finite.
\item Every silting $A$-module is finite dimensional up to equivalence.
\end{enumerate}
\end{theorem}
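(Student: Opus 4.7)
The plan is to establish the two equivalences (1)$\Leftrightarrow$(2) and (2)$\Leftrightarrow$(3) as separate steps, the first following \cite{DIJ} and the second following \cite{AMV4}, using Theorems~\ref{AIRbij} and~\ref{HKM} as the central tools.

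For (1)$\Leftrightarrow$(2), I would exploit the order-preserving bijection of Theorem~\ref{AIRbij} between basic support $\tau$-tilting modules and functorially finite torsion classes in $\modr A$; thus (1) amounts to the latter set being finite. For (2)$\Rightarrow$(1), suppose (1) fails. Structural properties of the poset of torsion classes in $\modr A$ (it is a complete lattice arising from an essentially small abelian category of finite length) yield an infinite strictly ascending chain $\Tcal_0\subsetneq\Tcal_1\subsetneq\cdots$. Its union $\Tcal=\bigcup_i\Tcal_i$ is again a torsion class, so by (2) combined with Theorem~\ref{AIRbij} one has $\Tcal=\mathsf{Gen}(M)\cap\modr A$ for some finitely generated $M$; but $M\in\Tcal_n$ for some $n$ forces $\Tcal=\Tcal_n$, a contradiction. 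Conversely, for (1)$\Rightarrow$(2), given a torsion class $\Tcal$ in $\modr A$ I would set $\Tcal_0$ to be the join of all functorially finite torsion classes contained in $\Tcal$; by (1) this is a finite join, hence itself functorially finite and maximal inside $\Tcal$. If $\Tcal_0\subsetneq\Tcal$, any $M\in\Tcal\setminus\Tcal_0$ generates a functorially finite torsion class $\mathsf{T}(M)\subseteq\Tcal$ (since $\mathsf{T}(M)$ is generated by a single finitely generated module, via a Bongartz-type completion), so $\Tcal_0\vee\mathsf{T}(M)\subseteq\Tcal$ strictly enlarges $\Tcal_0$, contradicting maximality.

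For (2)$\Leftrightarrow$(3), the direction (2)$\Rightarrow$(3) proceeds as follows. Given a silting module $T$ with presentation $\sigma$, Theorem~\ref{HKM} gives the torsion pair $(\mathsf{Gen}(T),\Fcal)$ in $\Modr A$; its restriction $\mathsf{Gen}(T)\cap\modr A$ is a torsion class in $\modr A$, which by (2) and Theorem~\ref{AIRbij} equals $\mathsf{Gen}(T')\cap\modr A$ for a unique basic support $\tau$-tilting module $T'$, necessarily finite-dimensional. The task is then to promote this coincidence to $\mathsf{Gen}(T)=\mathsf{Gen}(T')$ in all of $\Modr A$, which means $T$ and $T'$ are equivalent as silting modules; this rests on the finite-type phenomenon that the torsion class of a finite-dimensional silting module is determined by its finitely generated members, analogous in spirit to Theorem~\ref{finite type}. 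For (3)$\Rightarrow$(2), I would take an arbitrary torsion class $\Tcal$ in $\modr A$, extend it to $\vec{\Tcal}\subseteq\Modr A$ by closing under direct limits, and show that $\vec{\Tcal}=\mathsf{Gen}(T)$ for some silting $A$-module $T$; then (3) produces an equivalent finite-dimensional $T'$, and $\Tcal=\mathsf{Gen}(T')\cap\modr A$ is functorially finite by Theorem~\ref{AIRbij}.

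The hard step will be the implication (3)$\Rightarrow$(2), specifically the realisation of an arbitrary torsion class in $\modr A$ as the restriction of $\mathsf{Gen}(T)$ for some silting module $T$ in $\Modr A$. This is a classification statement at the level of modules rather than complexes, parallel to Theorem~\ref{finite type} but for two-term objects, and its proof relies on approximation-theoretic input (Bongartz-type completions, purity considerations, and finite-type arguments) particular to finite-dimensional algebras.
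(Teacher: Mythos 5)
The paper does not prove this theorem; it is cited from \cite{DIJ} (for (1)$\Leftrightarrow$(2)) and \cite{AMV4} (for the equivalence with (3)), so there is no in-text argument to compare against. Judging the plan on its own terms, the step (1)$\Rightarrow$(2) contains two unjustified claims. First, you set $\Tcal_0$ equal to the join of the (by (1), finitely many) functorially finite torsion classes contained in $\Tcal$ and assert that this is functorially finite \emph{because} the join is finite; but a finite join of functorially finite torsion classes need not be functorially finite a priori --- the fact that the functorially finite torsion classes form a sublattice of all torsion classes is itself a feature of $\tau$-tilting finiteness, so invoking it here is circular. Second, you claim that $\mathsf T(M)$, the smallest torsion class containing an arbitrary $M\in\modr A$, is functorially finite ``via a Bongartz-type completion.'' Bongartz completion applies to $\tau$-rigid modules, not to arbitrary $M$, and one of the central points of \cite{DIJ} is precisely that $\mathsf T(B)$ can fail to be functorially finite for a brick $B$ when $A$ is not $\tau$-tilting finite: the statement ``$\mathsf T(M)$ is always functorially finite'' is essentially equivalent to what you are trying to prove. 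The argument in \cite{DIJ} proceeds differently: starting from a maximal functorially finite $\Tcal_0\subsetneq\Tcal$, one picks $M\in\Tcal\setminus\Tcal_0$, passes to a brick quotient of $M/t_0(M)$, and uses mutation of the support $\tau$-tilting module realising $\Tcal_0$ to produce a cover $\Tcal_0\subsetneq\Tcal_1\subseteq\Tcal$ inside the poset of functorially finite torsion classes; iterating yields an infinite chain in that poset, contradicting (1).

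The remaining parts are weaker but not wrong in spirit. In (2)$\Rightarrow$(1), the existence of an infinite strictly ascending chain of torsion classes when the poset of support $\tau$-tilting modules is infinite needs an argument rather than an appeal to ``structural properties'': one uses local finiteness of the mutation quiver together with a K\"onig-type argument, or a statement such as \cite[Corollary 2.38]{AIR} that a finite connected component of the exchange graph forces the whole graph to be finite; once an ascending chain of functorially finite classes is produced, your union-and-trap argument is fine. For (2)$\Leftrightarrow$(3) you correctly identify the two nontrivial ingredients --- realising an arbitrary torsion class in $\modr A$ as $\mathsf{Gen}(T)\cap\modr A$ for some possibly infinite-dimensional silting $A$-module $T$, and passing from agreement of torsion classes on $\modr A$ to equivalence of silting modules in $\Modr A$ --- but these are exactly the content of \cite{AMV4}, and the plan offers no independent route to them; flagging them as ``the hard step'' is honest but leaves the implication unproved.
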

\begin{definition}  Under the conditions of Theorem~\ref{DIJ}, the algebra $A$ is said to be \emph{$\tau$-tilting finite}.
\end{definition}

Examples of such algebras include all algebras of finite representation type, and all  local  finite dimensional algebras \cite[Lemma 3.5]{Ma}. More examples  will be discussed in Section~\ref{discrete} where we introduce the class of silting-discrete algebras.  

The poset of basic support $\tau$-tilting modules over a $\tau$-tilting finite algebra has a  connected Hasse quiver by \cite[Corollary 3.10]{AIR}, and it is a complete lattice by condition (2) in Theorem~\ref{DIJ}. 
In general, however, the poset (s$\tau$-tilt$A$,\,$\ge$)   is not  a lattice.  For example, for a path algebra $A=kQ$, it forms a lattice if and only if the quiver $Q$ is either a Dynkin quiver of type $\mathbb{A, D, E}$, or it has exactly two vertices, see  \cite{IRTT}.
Results  on the lattice of all torsion classes in $\modr A$ can be found e.g.~in \cite{DIRRT,BCZ}.

A  reduction process for support $\tau$-tilting modules which is  compatible with silting reduction is introduced in \cite{Jasso}. Further reduction theorems are given in \cite{Adachi, Eisele}. 
The notion of support $\tau$-tilting module has been extended to  additive categories   in \cite{IJY}. 

\subsection{Hearts and endomorphism rings} 
Observe that  
a module $T$ is silting with respect to a monomorphic projective presentation $\sigma:P_{-1}{\hookrightarrow} P_0$ if and only if $\mathsf{Gen}(T)=\Ker\Ext_A^1(T,-)$.
The modules with this property  are called {\it (1-)tilting} \and
 are characterised by the following conditions:
 \begin{enumerate}
\item[(T1)] $T$ has projective dimension at most one;
\item[(T2)] $\Ext^1_A(T,T^{(I)})=0$ for all sets $I$;
\item[(T3)] There is a coresolution $0\longrightarrow A\longrightarrow T_0\longrightarrow T_1\longrightarrow 0$ with $T_0,T_1\in \Add(T)$. 
\end{enumerate}
Finitely presented tilting modules are called \emph{classical}.

 \smallskip
 
One of the fundamental results of tilting theory, the Tilting Theorem published by Brenner and Butler in 1980, states that a classical tilting module $T$ over a ring $A$ induces two torsion pairs, one in the category of $A$-modules, the other in the category of modules over the endomorphism ring of $T$, together with a pair of crosswise equivalences between the torsion and torsion-free classes. More precisely, the torsion pair in $\Modr A$ involved in these equivalences is   the torsion pair $(\mathsf{Gen}(T), \Fcal)$ from Theorem~\ref{HKM}, and  in one direction the pair of equivalences is given by the functors $\Hom_A(T,-)$ and $\Ext^1_A(T,-)$, while in the other direction one uses $\otimes$ and Tor.
%the tensor product and its first derived functor.

It turns out that this result extends to silting modules. In \cite{BZ1}, Buan and Zhou establish a Silting Theorem for  a support $\tau$-tilting module $T$ over a finite dimensional algebra $A$. Instead of the endomorphism ring of $T$, however, one needs to take the endomorphism ring $B=\End_{\mathsf D(A)}(\sigma)$ of the corresponding two-term silting complex $\xymatrix{P_{-1} \ar[r]^{\sigma} & P_0}$ in $\mathsf K^b(\mathrm{proj}(A))$.
% with $H^0(\sigma)=T$. 
This is a natural generalisation of  the tilting case, where  $\sigma$, being a projective resolution of $T$, is quasi-isomorphic to $T$, hence $B\cong \End_A(T)$. 

The general case of a silting module $T$ over an arbitrary ring $A$ is treated by Breaz and Modoi in \cite{BMmodule}. Here one has to go a step further and replace $\Modr B$ by  the 
%heart of the HRS-tilt of  $(\mathsf{Gen}(T), \Fcal)$, that is, the 
heart $\Hcal_\sigma$ of the silting object $\sigma$. Indeed, in the compact case
 $\Hcal_\sigma\cong \Modr B$ by Proposition~\ref{silting heart}, but in general $\Hcal_\sigma$  just 
 embeds in $\Modr B$. %Moreover, one has   to ensure that the coresolution of $A$ in Lemma~\ref{wei}  can be chosen in $\add(\sigma)$, which is always possible after 
 One then obtains a torsion pair $(\Xcal,\Ycal)$ in $\Modr B$ together with
   a pair of crosswise equivalences involving the torsion pair  $(\mathsf{Gen}(T), \Fcal)$ in $\Modr A$  and the subcategories of $\Xcal$ and $\Ycal$ determined by the heart $\Hcal_\sigma$. The pair of equivalences is now given by the functors $\Hom_{\mathsf D(A)}(\sigma,-)$ and $\Hom_{\mathsf D(A)}(\sigma,-[1])$, and also their converses can be described explicitly.

\smallskip

In  representation theory, the Tilting Theorem has  extensively been used to carry knowledge on module categories from one ring to the other, for example, to pass information from the well-understood class of hereditary algebras to the larger class of \emph{tilted} algebras, which are by definition  the endomorphism rings of  classical tilting modules over  finite dimensional hereditary algebras. Tilted algebras have global dimension at most two and satisfy the following homological condition.

\begin{definition}\cite{CL} A finite dimensional algebra $B$ is said to be  \emph{shod (small homological dimension)} if  every indecomposable $B$-module has either projective dimension or injective dimension at most one.
 \end{definition}
 
 In general, the global dimension of a shod algebra is bounded by three. 
 The shod algebras of global dimension at most two are precisely the \emph{quasitilted} algebras, i.e.~the endomorphism rings of tilting objects in Ext-finite hereditary abelian categories, see \cite{HRS}.

 \begin{theorem}\cite{BZ2} The following statements are equivalent for a connected finite dimensional algebra $B$ over an algebraically closed field.
 \begin{enumerate}
\item $B$ is a \emph{silted} algebra, \mbox{i.~e.}~it is the endomorphism ring of  a compact two-term silting complex  over a finite dimensional hereditary algebra;
\item $B$ is a tilted algebra, or it is shod of global dimension three.
\end{enumerate}
\end {theorem}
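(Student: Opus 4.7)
The plan is to leverage the HRS-tilt structure underlying every two-term silting complex. Throughout, let $A$ be a finite dimensional hereditary algebra, $\sigma:P_{-1}\to P_0$ a compact two-term silting complex in $\mathsf{D}(A)$, and set $T=H^0(\sigma)$ and $B=\End_{\mathsf D(A)}(\sigma)$. By Theorem~\ref{HKM}, $(\mathsf{Gen}(T),\Fcal)$ is a torsion pair in $\Modr A$ whose HRS-tilt is the silting t-structure $(\sigma^{\perp_{>0}},\sigma^{\perp_{\le 0}})$, and by Proposition~\ref{silting heart}(1), since $\sigma$ is compact, its heart $\Hcal_\sigma$ is equivalent to $\Modr B$, with finite dimensional objects corresponding to $\modr B$.

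For (1)$\Rightarrow$(2), I would first describe $\Hcal_\sigma\cap \mathsf D^b(\modr A)$ as the two-term complexes concentrated in degrees $-1,0$ with $H^{-1}\in\Fcal$ and $H^0\in\mathsf{Gen}(T)\cap\modr A$. For $M,N$ in this category, $\Ext^i_B(M,N)\cong\Hom_{\mathsf D^b(\modr A)}(M,N[i])$. Since $A$ is hereditary, Hom's between stalks in $\modr A$ vanish beyond shift $1$, which combined with the two-term support forces $\Ext^i_B=0$ for $i>3$; thus $B$ has global dimension at most $3$. The inclusions $\mathsf{Gen}(T)\hookrightarrow\Hcal_\sigma$ and $\Fcal[1]\hookrightarrow\Hcal_\sigma$ transport across the equivalence to a torsion pair $(\Xcal,\Ycal)$ in $\modr B$; a direct cohomological calculation from the canonical triangles in $\mathsf D^b(\modr A)$ shows that indecomposables in $\Xcal$ have projective dimension at most $1$ and those in $\Ycal$ have injective dimension at most $1$, making $B$ shod. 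When $\sigma$ is additionally exceptional (a tilting complex), $B$ is derived equivalent to $A$, the HRS-tilt is the classical one of Happel--Reiten--Smal\o, and $B$ is identified as tilted with global dimension $\le 2$; otherwise one checks that the global dimension is exactly $3$.

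For (2)$\Rightarrow$(1), in the tilted case write $B=\End_A(T_0)$ for a classical tilting module $T_0$ of projective dimension at most $1$ over a hereditary $A$; a projective resolution $P_{-1}\hookrightarrow P_0\twoheadrightarrow T_0$ realises $T_0$ as the zero cohomology of the two-term tilting complex $\sigma=[P_{-1}\to P_0]$, whence $\End_{\mathsf D(A)}(\sigma)\cong \End_A(T_0)=B$. In the strictly shod case of global dimension $3$, the strategy is to reverse the HRS-tilt: use the split torsion pair in $\modr B$ separating modules of projective dimension $\le 1$ from those of injective dimension $\le 1$, together with connectedness, to produce a finite dimensional hereditary algebra $A$ and a derived equivalence $\mathsf D^b(\modr B)\simeq \mathsf D^b(\modr A)$ under which $\modr B$ becomes the HRS-tilt of $\modr A$ at a functorially finite torsion pair. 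By Theorem~\ref{AIRbij}, that torsion pair comes from a basic support $\tau$-tilting $A$-module, whose associated two-term silting complex has endomorphism algebra $B$.

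The main obstacle is the strictly shod case of (2)$\Rightarrow$(1): constructing the hereditary algebra $A$ and the derived equivalence that exhibits $\modr B$ as a concrete HRS-tilt. The tilted case and the first implication require only formal homological book-keeping on top of Theorem~\ref{HKM} and Proposition~\ref{silting heart}, whereas the reverse construction in the shod case requires genuine structural input about shod algebras of global dimension $3$, namely that every such algebra arises as an HRS-tilt of a hereditary module category at a functorially finite (in general non-tilting) torsion pair. The connectedness hypothesis is used here to rule out block-wise decompositions that would produce tilting rather than properly silting data, and it is in this step that the algebraic closedness of the base field enters, via standard quiver and Auslander--Reiten techniques for shod algebras.
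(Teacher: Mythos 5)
The paper does not actually prove this theorem: it is quoted as a black box from Buan and Zhou \cite{BZ2}, so there is no internal argument to compare against. Reviewing the proposal on its own merits, the direction (1)$\Rightarrow$(2) contains a genuine gap. The isomorphism $\Ext^i_B(M,N)\cong\Hom_{\mathsf D^b(\modr A)}(M,N[i])$ that your bound on the global dimension rests on is automatic only for $i\leq 1$; for higher $i$ it is equivalent to full faithfulness of the realisation functor $\mathsf D^b(\Hcal_\sigma)\to\mathsf D^b(\modr A)$, and that is precisely what separates the tilting case from the strictly silting one (compare the theorem of \cite{PV,V} quoted a few lines later in the text). So $\mathrm{gldim}\,B\leq 3$ and the shod property cannot be read off by transporting $\Ext$-groups as you describe; Buan and Zhou argue inside $\modr B$ using the torsion pair and the pair of functors $\Hom_{\mathsf D(A)}(\sigma,-)$, $\Hom_{\mathsf D(A)}(\sigma,-[1])$ from the Silting Theorem of \cite{BZ1}. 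Two further assertions in your sketch need proof: that the transported torsion pair in $\modr B$ \emph{splits} (otherwise the pd/id bounds on the two classes do not yield the shod condition), and that $\sigma$ not being a tilting complex forces $\mathrm{gldim}\,B=3$ exactly --- a nontrivial statement whose proof uses connectedness of $B$, not a routine computation.

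For (2)$\Rightarrow$(1), the strictly shod case is, as you concede, deferred to unspecified "structural input"; but that is where essentially all the work lies. One needs the structure theory of shod algebras over an algebraically closed field (Coelho--Lanzilotta, building on Happel--Reiten--Smal\o) to manufacture the hereditary algebra $A$ and the two-term silting complex from the split torsion pair on $\modr B$. Your outline correctly locates the objective, but as written supplies no argument for the harder implication, so the proposal does not yet amount to a proof of the theorem.
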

%Moreover, it is shown in \cite{BZ2} that the shod algebras are precisely the endomorphism rings of two-term silting complexes in Ext-finite hereditary abelian categories.

It is  well known that the global dimension of the endomorphism ring of a tilting module over a ring $A$ can't exceed ${\rm gldim}\,A+1$. We have just seen above that this is no longer true for two-term silting complexes. It is shown in \cite{BZ3} that the endomorphism ring of a compact two-term silting complex over a finite dimensional algebra $A$ has global dimension at most $7$ if  ${\rm gldim}\,A\le 2$, but in general there is no bound. Indeed, for any $n>2$ there is a finite dimensional algebra $A$ of global dimension $n$ with a two-term silting complex whose endomorphism ring has infinite global dimension.
  %respectively of a tilting object in a . More generally, $B$ is quasitilted if there is  \cite{BZ2,BZ3}

\smallskip

Let us now turn to derived Morita theory.  Happel observed in the 1980's that derived categories are the natural setting for tilting theory. He rephrased the Tilting Theorem by proving that a classical tilting module $T$ over a ring $A$ induces a triangle equivalence $\mathsf D(A)\cong \mathsf D(\End_A(T))$. Later it was shown by Rickard  that two rings $A$ and $B$ are derived equivalent if and only if $B$ is the endomorphism ring of a tilting complex over $A$. 

Over the years, this result has successively been extended to  more general situations \cite{HRS,St,PV,FMS,NSZ,PSt,CHZ,V}, leading to a derived Morita theory for Grothendieck categories. 
% Again, when dealing with a non-compact tilting object $T$, one has to replace the category of modules over the endomorphism ring of $T$ by the heart of the  t-structure $\Hcal_T$ corresponding to $T$. 
Here the role of tilting complexes is  played by exceptional silting objects, that is, silting objects $T$ belonging to $T^{\perp_{\not=0}}$, which we know to coincide with the heart $\Hcal_T$. When $T$ is  not compact, we  have also to take care of coproducts, which leads to the following

\begin{definition} Let $\Acal$ be a Grothendieck category. 

(1) A silting object $T$ in  $\mathrm{D}(\Acal)$  is  \emph{tilting} if $\Add(T)$ is contained in the heart $\Hcal_T$.

(2) A t-structure $(\Vcal,\Wcal)$ in  $\mathrm{D}(\Acal)$   is \emph{intermediate} if 
 there are integers $n\leq m$ such that $\mathsf D^{\leq n}(\Acal)\subseteq \Vcal\subseteq \mathsf D^{\leq m}(\Acal),$
where  $(\mathsf D^{\leq 0}(\Acal), \mathsf D^{>0}(\Acal))$  is the standard t-structure in $\mathrm{D}(\Acal)$.
\end{definition}  

%(2) Two t-structures $(\Vcal_1,\Wcal_1)$ and $(\Vcal_2,\Wcal_2)$ are said to have \emph{finite distance} if there exists $n\in\mathbb N$ such that $\Vcal_1[n]\subseteq \Vcal_2\subset \Vcal_1[-n]$.

\begin{theorem}\cite[Theorem A]{PV},\cite[Theorem E]{V}
Let $\Acal$ be a Grothendieck category and $\Bcal$ an abelian category.  The following statements are equivalent.
\begin{enumerate}
\item $\Bcal$ has a projective generator, and there is a triangle equivalence $\mathrm{D}(\Bcal)\to\mathrm{D}(\Acal)$ that restricts to bounded derived categories.
\item There is a tilting object $T$ in $\mathrm{D}(\Acal)$ whose heart $\Hcal_T$ is equivalent to $\Bcal$ and whose associated t-structure is intermediate.\end{enumerate}
\end{theorem}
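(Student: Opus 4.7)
My plan is to treat the two implications separately: for $(1) \Rightarrow (2)$ I transport a projective generator along the given triangle equivalence, while for $(2) \Rightarrow (1)$ I construct the equivalence via a realization-type functor associated to the silting t-structure.

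For $(1) \Rightarrow (2)$, let $P$ be a projective generator of $\Bcal$ and view $P$ as a stalk complex in $\mathrm{D}(\Bcal)$. Projectivity yields $\Hom_{\mathrm{D}(\Bcal)}(P, X[i]) \cong \Hom_{\Bcal}(P, H^i(X))$ for every $X \in \mathrm{D}(\Bcal)$, and $P$ being a generator of $\Bcal$ then forces $P^{\perp_{>0}} = \mathrm{D}^{\leq 0}(\Bcal)$ and $P^{\perp_{\leq 0}} = \mathrm{D}^{>0}(\Bcal)$. So $P$ is silting with silting t-structure equal to the standard t-structure, heart $\Bcal$, and $P \in \Bcal = \Hcal_P$, making $P$ tilting; the standard t-structure is trivially intermediate. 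Setting $T := F(P)$, the equivalence $F$ transports the silting t-structure of $P$ to $(F(\mathrm{D}^{\leq 0}(\Bcal)), F(\mathrm{D}^{>0}(\Bcal)))$ on $\mathrm{D}(\Acal)$, with heart $F(\Bcal) \simeq \Bcal$ and $\Add(T) = F(\Add(P)) \subseteq F(\Bcal) = \Hcal_T$, so $T$ is tilting. Intermediateness in $\mathrm{D}(\Acal)$ follows from the hypothesis that $F$ restricts to bounded derived categories: this forces both $F$ and $F^{-1}$ to have bounded cohomological amplitude, which sandwiches $F(\mathrm{D}^{\leq 0}(\Bcal))$ between two shifts of the standard aisle of $\mathrm{D}(\Acal)$.

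For $(2) \Rightarrow (1)$, Proposition~\ref{silting heart} says $H^0_T(T)$ is a projective generator of $\Hcal_T$; since $T$ is tilting we have $T \in \Hcal_T$ and hence $H^0_T(T) = T$, so $T$ itself is a projective generator of $\Hcal_T \simeq \Bcal$. The real work is producing the triangle equivalence $\mathrm{D}(\Bcal) \to \mathrm{D}(\Acal)$. My approach is to use a realization functor $\mathrm{real}\colon \mathrm{D}(\Hcal_T) \to \mathrm{D}(\Acal)$ coming from a DG-enhancement of $\mathrm{D}(\Acal)$. The tilting hypotheses $\Add(T) \subseteq \Hcal_T$ together with $\Hom_{\mathrm{D}(\Acal)}(T, T[i]) = 0$ for $i \neq 0$ are precisely what is needed for $\mathrm{real}$ to match all Ext groups at the projective generator $T$ of $\Hcal_T$, so $\mathrm{real}$ is fully faithful on the localising subcategory generated by $T$; essential surjectivity onto $\mathrm{D}(\Acal)$ follows because $T$ is a generator. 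The intermediate condition then delivers the promised restriction to bounded derived categories, by placing shifts of $\Hcal_T$ inside a fixed slab of standard truncations.

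The main obstacle I foresee lies in the $(2) \Rightarrow (1)$ direction: extending $\mathrm{real}$ to an equivalence of the \emph{full} unbounded derived categories requires identifying $\Hcal_T$ as a Grothendieck category so that $\mathrm{D}(\Hcal_T)$ is well-generated and coproducts can be matched through the enhancement. This is delicate because, unlike in the compact case where Proposition~\ref{silting heart} immediately produces a module category, for general tilting objects one needs to combine the projective generator from Proposition~\ref{silting heart} with a cocomplete/AB5-type property of the heart, which typically requires exploiting intermediateness of the t-structure together with coproduct preservation of the truncation $v$. By contrast, the bounded-amplitude step in $(1) \Rightarrow (2)$ is comparatively routine, following from applying the bounded restriction of $F$ and $F^{-1}$ to $P$ and to a standard cohomological object of $\mathrm{D}(\Acal)$, respectively.
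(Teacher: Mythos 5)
The paper states this theorem with a citation to \cite[Theorem A]{PV} and \cite[Theorem E]{V} but does not give its own proof, so there is no internal argument to compare against; I can only assess your outline against the strategy of those references, which it tracks reasonably well in structure. Your $(1)\Rightarrow(2)$ direction (transport the standard t-structure along $F$, observe that a projective generator $P$ of $\Bcal$ is a tilting stalk complex with the standard t-structure, and use boundedness of $F$ and $F^{-1}$ to sandwich the image aisle) is the standard argument. Your $(2)\Rightarrow(1)$ direction (a realization functor sending $\mathrm{D}(\Hcal_T)$ to $\mathrm{D}(\Acal)$, fully faithful by the Ext-vanishing of $T$ and essentially surjective because $T$ generates) is also the strategy of \cite{PV} and \cite{V}.

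However, the obstacle you single out in $(2)\Rightarrow(1)$ is not quite the right one. The theorem deliberately does \emph{not} conclude that $\Bcal$ is Grothendieck, only that it has a projective generator; so the proof should not hinge on establishing Grothendieckness of $\Hcal_T$. The actual technical content in \cite{PV} is the construction and control of the realization functor for \emph{unbounded} derived categories, carried out via filtered enhancements (f-categories), while \cite{V} works with stable derivators; in both cases the hypothesis that the silting t-structure is \emph{intermediate} is what tames unboundedness. You should reorganize the argument so that intermediateness, rather than a Grothendieck property, is what drives the extension of the realization functor from the bounded to the unbounded level. A second point worth flagging: in $(1)\Rightarrow(2)$ the step $\Add(P)\subseteq\Hcal_P$ uses that the $\mathrm{D}(\Bcal)$-coproduct of copies of $P$ lies in $\Bcal$, i.e.\ that the standard t-structure on $\mathrm{D}(\Bcal)$ is smashing; this is true but needs a word (exactness of coproducts in $\Bcal$, which it inherits from the equivalence with $\mathrm{D}(\Acal)$), since $\Bcal$ was not assumed cocomplete or AB4 a priori. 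Finally, the sandwiching of $F(\mathrm{D}^{\le 0}(\Bcal))$ between shifts of the standard aisle of $\mathrm{D}(\Acal)$ is most cleanly phrased by noting $\mathrm{D}^{\le 0}(\Bcal)=\mathsf{Susp}(P)$ (Proposition~\ref{properties silting}(1)), so $F(\mathrm{D}^{\le 0}(\Bcal))=\mathsf{Susp}(F(P))$ sits inside $\mathrm{D}^{\le b}(\Acal)$ whenever $F(P)\in\mathrm{D}^{\le b}(\Acal)$, and dually for $F^{-1}$; invoking ``bounded cohomological amplitude'' without this reduction leaves a gap for genuinely unbounded objects in the aisle.
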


A dual statement holds true for cotilting objects, which we are going to introduce in the next section.

%%%%%%%%%%%%%%%%%%%%%%%%%%%%%%%%%%%%%%%%%%%%%%%%%%%%%%%%%%%%%
\section{Cosilting objects}\label{cosilt}
%%%%%%%%%%%%%%%%%%%%%%%%%%%%%%%%%%%%%%%%%%%%%%%%%%%%%%%%%%%%%
Throughout this section, let  $\T$ be a compactly generated triangulated category, and   $A$  a ring with identity. 
We  turn to the dual notions of a cosilting object and a cosilting complex, introduced in \cite{PV}  and \cite{ZW}, respectively. 
\begin{definition} 
(1) An object $C$ in $\Tcal$ is called \emph{cosilting} if $({}^{\perp_{\leq 0}}C,{}^{\perp_{>0}}C)$ is a t-structure in $\T$. 
We call such a t-structure \emph{cosilting}, denote its  heart  by $\Hcal_C$, and write $H^0_C:\T\longrightarrow \Hcal_C$ for the associated cohomological functor. 

(2) A cosilting object $C$ in $\Tcal$  is  \emph{cotilting} if $\Prod(C)$ is contained in the heart $\Hcal_C$.

\smallskip

(3)  A bounded complex of injective $A$-modules $\sigma$ in $\mathsf{K}^b(\mathrm{Inj}(A))$ is a \emph{cosilting complex} if it satisfies the following conditions:
\begin{enumerate}
\item[(C1)] $\Hom_{D(A)}(\sigma^I,\sigma[i])=0$, for all sets $I$ and $i>0$;
\item[(C2)] the smallest triangulated subcategory of $D(A)$ containing $\Prod(\sigma)$ is $\mathsf{K}^b(\mathrm{Inj}(A))$.
\end{enumerate}
\end{definition}

\begin{remark} In \cite{AMV3}, the heart of a cosilting t-structure $(\Ucal, \Vcal)$ is defined as $\Ucal[-1]\cap\Vcal$, deviating by a shift from this note, where we stick to Definition~\ref{def tp}.\end{remark}

We have seen in Theorem~\ref{finite type} that silting complexes inherit a ``finite type'' property from tilting modules. Dually,  cosilting complexes will inherit from cotilting modules the property of being  pure-injective. Let us start by reviewing the relevant concepts.

%%%%%%%%%%%%%%%%%%%%%%%%%%%%%%%%%%%%%%%%%%%%
\subsection{Purity in triangulated categories}
Purity in module categories is a classical subject originating from the theory of abelian groups and extensively studied since the 1950's. The introduction of functorial methods in the theory of purity goes back to the 1970's. Following the approach developed by Gruson and Jensen, one embeds $\Modr A$ into the category $(A\lmod, \Ab)$ of all  covariant additive functors from finitely presented left $A$-modules to abelian groups, replacing a module $M$ by the functor $(M\otimes_A -)\mid_{A\lmod}$. Pure-exact sequences of modules then correspond to short exact sequences in the functor category $(A\lmod, \Ab)$, and pure-injective modules to injective objects. Questions on  pure-injective modules are thus translated into questions on injectives in the locally coherent Grothendieck category $(A\lmod, \Ab)$, for which one can employ powerful tools such as localisation techniques and dimension theory. 

The theory of purity in a compactly generated triangulated category $\Tcal$ has been developed around 2000 in \cite{Kr1,Bel}. The approach is essentially the same,  the role of the finitely presented modules  now being played by the compact objects. More precisely, one associates to $\T$  the locally coherent Grothendieck
 category $\Modr\Tcal^c$ of all contravariant additive functors $\T^c\rightarrow \Ab$ from compact objects  to abelian groups,  via 
 the {restricted Yoneda functor} $\mathbf{y}:\T\to\Modr\T^c$
which assigns to an object $X$  the functor $\mathbf{y}X=\Hom_\Tcal(-,X)_{|\Tcal^c}$. 
 
\begin{definition} A triangle 
$\Delta: \xymatrix{X\ar[r]^f&Y\ar[r]^g&Z\ar[r]&X[1]}$ 
in $\T$ is said to be a \textit{pure triangle} if $\mathbf{y}\Delta$ is a short exact sequence in  $\Modr\Tcal^c$, that is, for any compact object $K$ in $\T$, the sequence
$$\xymatrix{0\ar[r]&\Hom_{\T}(K,X)\ar[rr]^{\Hom_\T(K,f)}&&\Hom_\T(K,Y)\ar[rr]^{\Hom_\T(K,g)}&&\Hom_\T(K,Z)\ar[r]&0}$$
is exact. We then refer to $f:X\rightarrow Y$  as a \textit{pure monomorphism}.  
%(respectively, a \textbf{pure epimorphism}) if $\mathbf{y}f$ is a monomorphism 
%(respectively, an epimorphism) in $\Modr\T^c$.
The objects $E$ of $\T$ for which every pure monomorphism of the form $f:E\rightarrow Y$ in $\Tcal$ splits are  called \textit{pure-injective}. In other words, an object $E$   is pure-injective if and only if  $\mathbf{y}E$ is an injective object in $\Modr\T^c$. 
\end{definition} 
%Similarly, an object $P$ is said to be \textbf{pure-projective} in $\T$ if any pure epimorphism $g:X\rightarrow P$ splits.

%The following theorem collects useful properties of pure-injective objects.
%\begin{theorem}\cite{Kr1,Bel}\label{Krause}The following statements are equivalent for an object $E$ in $\Tcal$.\begin{enumerate}\item $E$ is pure-injective;\item $\mathbf{y}E$ is an injective object in $\Modr\T^c$;\item 

Notice that, unlike the abelian case,  $\mathbf y$ is not a fully faithful embedding in general.
In fact, the  objects $E$ for which $\mathbf{y}$ induces an isomorphism $\Hom_\Tcal(X,E)\cong \Hom_{\Modr\Tcal^c}(\mathbf{y}{X},\mathbf{y}{E})$ for any  $X$ in $\Tcal$ are precisely the pure-injective ones. For details, we refer to \cite{Kr1,Bel,Prest}.

\smallskip

We are going to see that pure-injectivity of an object can often be phrased in terms of the notion of a definable subcategory. This is a concept which   has its origins in the model theoretic approach to representation theory. Definable subcategories of  $\Modr A$ are the zero sets of pp-functors. The latter are precisely the functors $F:\Modr A\to \Ab$ which commute with direct limits and products, or equivalently, which admit a presentation $\Hom_A(K,-)\longrightarrow \Hom_A(L,-)\longrightarrow F\rightarrow 0$ with   $K,L\in\modr A$. The triangulated version reads as follows.
%from \cite[Proposition 5.1]{Kr2} 
\begin{definition}\cite{Kr2} (1) A covariant additive functor $F:\Tcal\longrightarrow \Ab$ is said to be \textit{coherent} if it admits a presentation 
$\Hom_\T(K,-)\longrightarrow \Hom_\T(L,-)\longrightarrow F\rightarrow 0$
with  $K,L\in\T^c$, or equivalently,  $F$ preserves products and coproducts and takes pure triangles to short exact sequences. 
%The category of coherent functors is denoted by $\mathrm{Coh}$-$\T$. 

\smallskip

(2)
A subcategory $\Vcal$ of $\Tcal$ is said to be \textit{definable} if there is a family of coherent functors \mbox{$F_i: \T\longrightarrow\Ab,\, {i\in I},$} 
%from $\T$ to $\mathrm{Mod}(\mathbb{Z})$ 
such that $X$ lies in $\Vcal$ if and only if $F_i(X)=0$ for all $i$ in $I$.
%\smallskip
%(3) For a subcategory $\Vcal$ of $\T$, we denote by $\mathrm{Def}(\Vcal)$ the smallest definable subcategory containing $\Vcal$.
\end{definition}

\begin{examples}\label{defble}
(1) If $\Scal$ is a set of compact objects in $\Tcal$, then $\Vcal=\Scal^{\perp_0}$ is definable.

(2) \cite[Lemma 4.8]{AMV3} 
If  $\mathscr{C}$ is an additive subcategory of $\Tcal$, then $\Vcal={}^{\perp_{>0}}\mathscr{C}$ is definable if and only if it  is closed under products and every object in $\mathscr{C}$ is pure-injective.
%{Moreover, if the above conditions are satisfied, then there is a t-structure $(\Ucal,\Vcal)$.}
\end{examples}

%There is analogous notion of definability in module categories. It is well known that the analogous notion in 
By definition,  definable subcategories of $\T$ are closed under products, coproducts, pure subobjects and pure quotients. %Furthermore, it is shown in 
By \cite[Corollary 4.4]{AMV3} 
%that the definable closure $\mathrm{Def}(\Vcal)$ of a subcategory $\Vcal$ of $\T4 consists precisely of the objects $X\in\T$ for which $\mathbf{y}X$ lies in the definable closure of $\mathbf{y}\Vcal)$.As a consequence,  any definable subcategory of  $\Tcal$ is 
they are also closed under pure-injective envelopes.
The same closure conditions are verified by definable subcategories of module categories. In fact, closure under direct products, pure submodules and direct limits even characterises definability in $\Modr A$. We are going to see that a  similar result can be achieved in   $\T$ when there is  an enhancement provided by a strong and stable derivator $\mathbb D$. 

Roughly speaking, one assumes the existence of a 2-functor 
$\mathbb D:Cat\,^{op}\longrightarrow CAT$ from the 2-category $Cat$ of all small categories to the 2-category $CAT$ of all categories, which satisfies certain axioms and has the property that $\T$ equals $\mathbb D(\mathds{1})$, where $\mathds{1}$ is the category with a single object and its identity morphism. We will say that $\T$ is the \textit{underlying category}, or the  base,  of the derivator  $\mathbb D$. We refer to  \cite{SSV} and \cite{L} for details. Here we only remark that all compactly generated triangulated categories which are algebraic, and in particular, all derived module categories, admit such an enhancement.

The axioms on $\mathbb D$ ensure that for any small category $I$ the unique functor $\pi:I\to \mathds 1$ gives rise to a functor $\pi^\ast: \mathbb D(\mathds 1)\to \mathbb D(I)$ together with a left adjoint $\pi_!$ and a right adjoint $\pi_\ast$. One refers to $\pi_!$ as the \textit{homotopy colimit functor} 
$\hocolim_I:\mathbb D(I)\to \mathbb D(\mathds 1)$ and to $\pi_\ast$ as the \textit{homotopy limit functor} 
$\holim_I:\mathbb D(I)\to \mathbb D(\mathds 1)$. 

Note that every object $i$ in $I$ defines a unique functor $i:\mathds 1 \to I$, which by the axioms on $\mathbb D$ induces a functor $i^\ast:\mathbb D(I)\to \mathbb D(\mathds 1)$.  The image of an   object $X$ in $\mathbb  D(I)$ under this functor is denoted by  $X_i=i^\ast(X)$.
%This allows to think of an   object $X$ of $\mathbb  D(I)$, called a coherent diagram of shape $I$,   as a functor $I\to \mathbb D(\mathds 1)$  which is defined on objects by  $i\mapsto X_i=i^\ast(X)$
We will say that a class of objects $\Vcal$ in $ D(\mathds 1)$ is \textit{closed under directed homotopy colimits} if  for all directed categories $I$ and all objects $X$ in  $\mathbb D(I)$ having the property that $X_i$ lies in $\Vcal$ for all $i$ in $I$ we have that the homotopy colimit $\hocolim_I(X)$ belongs to $\Vcal$. 

 Now we can state the characterisation of definability announced above.

\begin{theorem}\cite[Theorem 3.11]{L}\label{closure} Assume that $\T$ is a compactly generated triangulated category which  is the underlying category of a strong and stable derivator. A subcategory  of $\T$ is definable if and only if it  is closed under 
products, pure subobjects and  directed homotopy colimits.
\end{theorem}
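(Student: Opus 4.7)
The proof splits into the elementary forward direction and the substantive converse, with the derivator hypothesis doing real work only in the latter.

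For the forward direction, I would argue that the kernel of any coherent functor is closed under each of the three operations, and then intersect. A coherent functor $F$ is by definition presented as the cokernel of a map $\Hom_\T(L,-)\to\Hom_\T(K,-)$ with $K,L$ compact. Representable functors preserve products, hence so do their cokernels. Pure subobject closure is immediate: if $X\to Y$ is a pure monomorphism, embed it into a pure triangle; $F$ sends this to a short exact sequence, so $F(Y)=0$ forces $F(X)=0$. For directed homotopy colimits the key input is that representables at compact objects commute with $\hocolim$ over directed shapes in a strong and stable derivator (the enhanced form of compactness), so their cokernels do as well. Definable subcategories, being intersections of such kernels, inherit all three closure properties.

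For the converse, let $\Vcal$ be closed under the three operations and let $\mathcal F_\Vcal$ be the collection of coherent functors annihilating $\Vcal$; this is a Serre subcategory of the abelian category of coherent functors. Set $\Vcal'':=\bigcap_{F\in\mathcal F_\Vcal}\ker F$; then $\Vcal''$ is definable and contains $\Vcal$, and the task is to show $\Vcal''\subseteq\Vcal$. My plan is to transport the problem across the restricted Yoneda functor $\mathbf y:\T\to\Modr\T^c$ into the locally coherent Grothendieck category $\Modr\T^c$. There the classical Gruson-Jensen-Krause theory gives that any object of the definable closure of $\mathbf y\Vcal$ is a directed colimit of pure subobjects of products of members of $\mathbf y\Vcal$. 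So each $\mathbf y X$ with $X\in\Vcal''$ admits such a presentation.

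The main obstacle is lifting this presentation from $\Modr\T^c$ back to $\T$. Products and pure subobjects lift without issue, because $\mathbf y$ preserves products and detects pure triangles by definition. The directed colimit, however, does not come from $\T$ in any naive way; this is precisely where the derivator hypothesis is indispensable. The required input is that for a directed $I$ and any $X\in\mathbb D(I)$ with values $X_i$, one has a natural isomorphism $\mathbf y(\hocolim_I X)\cong\varinjlim_I \mathbf y X_i$ in $\Modr\T^c$, so that directed colimits in the functor category are realized by directed homotopy colimits in $\T$. Assuming this compatibility (which is where strength and stability of the derivator, together with compactness of objects of $\T^c$ with respect to $\hocolim$, feed in), the three closure hypotheses on $\Vcal$ produce an object $Y\in\Vcal$ with $\mathbf y Y\cong\mathbf y X$.

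Finally, to pass from the Yoneda-level isomorphism back to an isomorphism in $\T$, I would reduce to the pure-injective case using that $\mathbf y$ is fully faithful on pure-injectives: replace $X\in\Vcal''$ by its pure-injective envelope $PE(X)$, which still lies in the definable class $\Vcal''$ by \cite[Corollary 4.4]{AMV3}; the construction above then yields a pure-injective $Y\in\Vcal$ with $\mathbf y Y\cong\mathbf y PE(X)$, hence $Y\cong PE(X)$ and so $PE(X)\in\Vcal$. The pure monomorphism $X\hookrightarrow PE(X)$ and closure of $\Vcal$ under pure subobjects then give $X\in\Vcal$, completing the argument. The hardest step is without question the $\mathbf y$-$\hocolim$ compatibility for directed shapes; it is precisely what forces the passage from the purely triangulated setting (where only Milnor, i.e.\ sequential, colimits are intrinsically available) to the derivator-enhanced setting.
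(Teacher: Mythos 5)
Your forward direction is essentially correct: a coherent functor $F$ with presentation $\Hom_\T(L,-)\to\Hom_\T(K,-)\to F\to 0$ preserves products because $\mathsf{Ab}$ is AB4*, it kills pure monomorphisms into objects of $\ker F$ because it takes pure triangles to short exact sequences, and it preserves directed homotopy colimits because compact objects commute with them in a stable derivator (a real result, not a tautology) and filtered colimits are exact in $\mathsf{Ab}$. Intersecting kernels inherits all three closure properties.

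The converse is where the gap lies, and it is more serious than ``technically demanding.'' Two concrete problems with the lifting step you single out as the crux.

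First, even granting the structure theorem you invoke in $\Modr\T^c$ (that each object of the definable closure of $\mathbf y\Vcal$ is a directed colimit of pure subobjects of products of objects in $\mathbf y\Vcal$), the intermediate objects in such a presentation are a priori arbitrary objects of the functor category. Nothing guarantees they lie in the essential image of $\mathbf y$, so there are no $\T$-level objects $Y_i$ with $\mathbf y Y_i$ matching the diagram, and the $\mathbf y$--$\hocolim$ compatibility has no diagram to be applied to. The restricted Yoneda functor is far from essentially surjective, and the essential image is not closed under arbitrary pure subobjects in $\Modr\T^c$.

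Second, even if the $Y_i$ did exist, an $I$-indexed diagram of objects and morphisms in the plain triangulated category $\T$ does not automatically lift to a coherent object of $\mathbb D(I)$. The derivator gives you the functor $\hocolim_I:\mathbb D(I)\to\T$, but lifting an incoherent diagram from $\T$ to $\mathbb D(I)$ is a genuine coherence problem that your argument does not address. The compatibility $\mathbf y(\hocolim_I X)\cong\varinjlim_I\mathbf y X_i$ is the easy direction (derivator-level computations descend to $\Modr\T^c$); what your proof needs is the reverse, and that is exactly what fails formally. This is why arguments of this kind --- including the one in Laking's paper --- are organized so that the relevant directed diagrams (typically reduced products/ultraproducts of objects of $\Vcal$) are constructed \emph{inside} $\mathbb D(I)$ from the start, rather than being built in $\Modr\T^c$ and then lifted. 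Your sketch has the right ingredients (transport to the functor category, the $\mathbf y$--$\hocolim$ compatibility, the pure-injective envelope trick), but in the order you propose them they do not assemble into a proof.
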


Finally, let us point out that definable subcategories are  ``functorially finite'', in the sense that they provide both left and right approximations. This is a well-known result for module categories 
%\cite{RS,HJ} 
which has the following triangulated version.

\begin{theorem}\label{ff} Let   $\Vcal$ be a definable subcategory of  a compactly generated triangulated category $\T$.
\begin{enumerate}
\item \cite[Proposition 4.5]{AMV3} $\Vcal$ is preenveloping. 
\item \cite{LV} Assume that $\T$ is algebraic. Then $\Vcal$ is  precovering. Moreover, if $\Vcal$ is closed under extensions, then it is the aisle of a torsion pair.
\end{enumerate}
\end{theorem}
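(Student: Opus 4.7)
For part (1), the plan is to exploit the closure of $\Vcal$ under products, which is immediate from definability (each coherent functor $F_i$ preserves products, so its kernel is product-closed), together with a solution-set condition. Given $X \in \T$, I would argue that there is a set $\{f_i\colon X \to V_i\}_{i \in I}$ of morphisms into $\Vcal$ through which every morphism $X \to V'$ with $V' \in \Vcal$ factors. To justify this, I would use that $\Vcal$ is cut out by a set of coherent functors presented by pairs of compact objects $K_j, L_j$, and that $\T$ has a set of compact generators. These data bound a regular cardinal $\kappa$ such that every object of $\Vcal$ is a $\kappa$-directed homotopy colimit of $\kappa$-presentable objects still in $\Vcal$, and hence every morphism $X \to V'$ factors through such a $\kappa$-presentable object. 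Since isomorphism classes of $\kappa$-presentable objects of $\Vcal$ form a set, the canonical morphism $X \to \prod_i V_i$ lies in $\Vcal$ and is a $\Vcal$-preenvelope of $X$.

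For the precovering part of~(2), the derivator enhancement becomes essential. I would construct a right $\Vcal$-approximation of a given $X$ by transfinite iteration: start with $V_0 = 0 \to X$, and at each successor stage, enlarge $V_\alpha \to X$ by taking a coproduct (coproducts lie in $\Vcal$ by coproduct-closure) indexed over all not-yet-factoring morphisms from a fixed set of $\kappa$-presentable test objects of $\Vcal$ to $X$, then taking a suitable cone/pushout. At limit stages I would take a directed homotopy colimit in the sense of the derivator $\mathbb D$; by Theorem~\ref{closure} the result still belongs to $\Vcal$. A cardinality argument, mirroring that of~(1), forces the process to stabilise after $\kappa$ steps and yields the desired right $\Vcal$-approximation $V \to X$.

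For the torsion pair, assume in addition that $\Vcal$ is closed under extensions. Given $X$, take a right $\Vcal$-approximation $f\colon V \to X$ as above and complete it to a triangle $V \to X \to W \to V[1]$. To force $W \in \Vcal^{\perp_0}$, I would build $f$ as a \emph{special} approximation by incorporating, at each iteration, not only morphisms into $X$ but also extensions: whenever $\Hom_\T(V', W_\alpha) \neq 0$ for some test $V' \in \Vcal$, use a nonzero class to form a triangle $V_\alpha \to V_{\alpha+1} \to V' \to V_\alpha[1]$, which keeps $V_{\alpha+1}$ in $\Vcal$ by extension-closure and strictly decreases the Hom-defect. Directed homotopy colimits at limit stages again stay in $\Vcal$ by Theorem~\ref{closure}. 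This is the usual Salce-type trick adapted to the triangulated algebraic setting, and it terminates by the same cardinality bound. Closure of $\Vcal$ under summands is automatic from closure under pure subobjects, and $\Hom_\T(\Vcal, \Vcal^{\perp_0}) = 0$ holds by definition, so $(\Vcal, \Vcal^{\perp_0})$ is a torsion pair.

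The main obstacle is controlling the transfinite constructions in~(2) and verifying termination. Pinpointing the cardinal $\kappa$ requires combining the presentation data for $\Vcal$ with compact generators of $\T$, and showing that the built-in directed homotopy colimits genuinely represent the Hom-functors on compact test objects. It is exactly at this point that the derivator assumption is indispensable: without a coherent calculus for homotopy colimits, one cannot guarantee that the transfinite union remains in $\Vcal$, which is why (1) works in general while (2) requires $\T$ to be algebraic.
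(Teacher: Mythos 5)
Your argument for part~(1) has a genuine gap. To obtain the solution set you invoke ``$\kappa$-directed homotopy colimits'' and ``$\kappa$-presentable objects of $\Vcal$,'' but these tools are simply not available at the level of generality of part~(1), where $\T$ is only assumed to be compactly generated. Closure of definable subcategories under directed homotopy colimits is the content of Theorem~\ref{closure}, which explicitly requires $\T$ to be the base of a strong and stable derivator --- an assumption that appears only in part~(2). In fact, your own closing paragraph identifies the derivator as ``indispensable'' for this kind of homotopy-colimit calculus and offers this as the reason that (2) needs algebraicity while (1) does not, which makes the appeal to homotopy colimits inside the proof of (1) internally inconsistent. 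Beyond that, a compactly generated triangulated category is not locally presentable, objects are not small in any useful accessibility-theoretic sense, and there is no reason for $\Vcal$ to decompose every object as a filtered homotopy colimit of $\kappa$-presentable subobjects; none of this accessibility machinery is established by the paper or the cited references.

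The route taken in \cite[Proposition~4.5]{AMV3} is different and avoids these difficulties by going through pure-injectivity rather than presentability. Definable subcategories are closed under products, and by \cite[Corollary~4.4]{AMV3} also under pure-injective envelopes. Via the restricted Yoneda embedding $\mathbf y\colon\T\to\Modr\T^c$, pure-injective objects of $\T$ correspond to injectives in the locally coherent Grothendieck category $\Modr\T^c$, and there is a representative set of pure-injectives lying in $\Vcal$; a $\Vcal$-preenvelope of $X$ is then a suitable product, over that set, of pure-injectives receiving morphisms from $X$, and the factorisation property follows because every object of $\Vcal$ purely embeds into a pure-injective object of $\Vcal$. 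This works without any enhancement because it only uses the ambient functor category. For part~(2) your sketch --- a transfinite small-object argument with homotopy colimits at limit stages and a Salce-type refinement to produce special precovers --- is at least in the right spirit, since there the derivator hypothesis is in force and Theorem~\ref{closure} is legitimately available; but note that forming a homotopy colimit at a limit ordinal requires lifting the transfinite chain to a coherent diagram in $\mathbb D$, which is a nontrivial step your sketch does not address. (The paper cites \cite{LV}, still in preparation, so no direct comparison for (2) is possible.)
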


\subsection{Pure-injective cosilting objects} 
%%%%%%%%%%%%%%%%%%%%%%%%%%%%%%%%%%
We now return to cosilting complexes.  
 It is shown in \cite{MV}, dually to Theorem~\ref{finite type}, that every cosilting complex  can be interpreted as a cotilting module in a suitable module category.  Observe that both in module categories and in triangulated categories, pure-injectivity of an object $E$ can be characterised by the following factorisation property: For every set $I$, the summation map $E^{(I)}\rightarrow E$ factors through the canonical map $E^{(I)}\rightarrow E^I$. This allows to translate to $\mathsf D(A)$ another important result from tilting theory 
 %due to Bazzoni and  J.~\v{S}\v{t}ovi\v{c}ek 
 stating that every cotilting module is pure-injective.
\begin{proposition}\cite[Proposition 3.10]{MV}\label{FJ} (1) Every (bounded) cosilting complex is pure injective.

(2) A bounded complex of injective $A$-modules is a cosilting object in $\mathsf{D}(A)$ if and only if it is a {cosilting complex}. \end{proposition}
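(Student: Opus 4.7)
My plan is to prove (2) first, mirroring Proposition~\ref{obj=cpx}, and then derive (1) via the factorization characterization of pure-injectivity sketched before the proposition.

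For the $(\Rightarrow)$ direction of (2), suppose $\sigma\in\mathsf{K}^b(\mathrm{Inj}(A))$ is a cosilting object. I would first observe that $\sigma\in{}^{\perp_{>0}}\sigma$: the coaisle of any t-structure equals the right $\perp_0$ of the aisle, and every $X\in{}^{\perp_{\leq 0}}\sigma$ satisfies $\Hom(X,\sigma)=0$ (the $i=0$ instance of the defining condition), so $\sigma\in({}^{\perp_{\leq 0}}\sigma)^{\perp_0}={}^{\perp_{>0}}\sigma$. Since coaisles are closed under products, $\Prod(\sigma)\subseteq{}^{\perp_{>0}}\sigma$, which is (C1). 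For (C2), boundedness of $\sigma$ in injectives together with standard truncations forces the cosilting t-structure to be intermediate (in the dual sense), so a dual of Lemma~\ref{wei} expresses an injective cogenerator of $\Modr A$ as a finite iterated extension of objects of the coheart, which coincides with $\Prod(\sigma)$ by a dual of Proposition~\ref{properties silting}(3). Since an injective cogenerator triangulated-generates $\mathsf{K}^b(\mathrm{Inj}(A))$, this yields (C2).

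For the $(\Leftarrow)$ direction of (2), given a cosilting complex $\sigma$, I would show $({}^{\perp_{\leq 0}}\sigma,{}^{\perp_{>0}}\sigma)$ is a t-structure by one of two routes. One is character duality $D=\Hom_{\mathbb{Z}}(-,\mathbb{Q}/\mathbb{Z})$, which identifies cosilting complexes of right $A$-modules with silting-type objects over $A^{\mathrm{op}}$, so that the silting t-structure supplied by Proposition~\ref{obj=cpx} pulls back. The other is to dualize the Milnor colimit construction of Proposition~\ref{properties silting}(2) to a homotopy limit, producing approximation triangles directly; this exploits the algebraic enhancement of $\mathsf{D}(A)$.

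For part (1), I would apply the factorization property: $\sigma$ is pure-injective iff for every set $I$ the summation $\mu:\sigma^{(I)}\to\sigma$ factors through the canonical map $\iota:\sigma^{(I)}\to\sigma^I$. A direct attempt via the triangle $\sigma^{(I)}\to\sigma^I\to C\to\sigma^{(I)}[1]$ fails, because the cone $C$ need not lie in ${}^{\perp_{>0}}\sigma$ and so $\Hom(C,\sigma[1])$ is not forced to vanish, even though both $\sigma^{(I)}$ and $\sigma^I$ do lie in ${}^{\perp_{>0}}\sigma$ by (C1). Following \cite{MV}, the clean route is the dual of Theorem~\ref{finite type}: a cosilting complex corresponds, via representations of the linearly oriented $\mathbb{A}_n$-quiver with length-two relations, to a cotilting module in a suitable module category. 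Bazzoni's classical theorem on cotilting modules supplies pure-injectivity in that module-theoretic setting, and this transports back to $\sigma$ since the correspondence preserves products, coproducts, and summation maps. The main obstacle is making this identification precise and verifying that it preserves pure monomorphisms; once this is done, (1) is immediate from Bazzoni, and (2)$(\Leftarrow)$ is available through the same translation as an alternative to the character-duality route.
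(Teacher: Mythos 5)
Your treatment of part (1) lines up with the paper: the argument goes via the interpretation of a cosilting complex as a cotilting module over a suitable algebra (representations of the linearly oriented $\mathbb{A}_n$ with radical-square-zero relations), where Bazzoni's pure-injectivity theorem for cotilting modules applies; and you are right to flag that a direct factorisation argument through the triangle $\sigma^{(I)}\to\sigma^I\to C$ breaks down. Likewise, your route for (2)$(\Rightarrow)$ --- intermediacy of the cosilting t-structure plus a dual of Lemma~\ref{wei} --- mirrors the paper's statement that this direction is dual to the proof of Proposition~\ref{obj=cpx}.

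The problem is (2)$(\Leftarrow)$, and with it the proposed ordering. Both of your primary routes run into genuine obstructions. Character duality $(-)^+$ sends silting complexes over $A$ to cosilting complexes over $A^{\mathrm{op}}$, but its image consists precisely of the cosilting complexes of cofinite type; Example~\ref{notcoft} exhibits a two-term cosilting complex that is \emph{not} of cofinite type, so you cannot ``pull back'' an arbitrary cosilting complex along this duality to a silting complex. Dualising the Milnor colimit construction of Proposition~\ref{properties silting}(2) to homotopy limits is also not available: that construction crucially exploits that $\Hom_\T(K,-)$ commutes with coproducts for compact $K$ and is used to verify membership in $T^{\perp_{\leq 0}}$ after passing to a Milnor colimit; there is no corresponding compatibility between compacts and products/homotopy limits, and indeed the dual construction is exactly what fails without pure-injectivity. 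The paper's actual argument is: (1) pure-injectivity of $\sigma$ (via the cotilting translation and Bazzoni); then (C1) gives closure of ${}^{\perp_{>0}}\sigma$ under products, and pure-injectivity of $\sigma$ upgrades this to definability by Example~\ref{defble}(2); definability gives a preenvelope by Theorem~\ref{ff}(1); and Theorem~\ref{N} (together with \cite[Proposition~1.1]{KV}) converts the preenveloping cosuspended subcategory ${}^{\perp_{>0}}\sigma$ into the coaisle of a t-structure, whose aisle is identified with ${}^{\perp_{\leq 0}}\sigma$ using (C2). So (1) must come before (2)$(\Leftarrow)$, and the missing step in your writeup is precisely the bridge pure-injectivity $\Rightarrow$ definability $\Rightarrow$ preenveloping $\Rightarrow$ coaisle.
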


Let us sketch the argument for the second statement. Given a cosilting complex $C$, one uses that $\Vcal={}^{\perp_{>0}}C$ is closed under products by \cite[Proposition  2.12]{ZW} and therefore definable by Example~\ref{defble}(2). Combining Theorems~\ref{ff} and \ref{N}, it follows that $\Vcal$ is the coaisle of a t-structure, and one checks that the aisle equals 
${}^{\perp_{\leq 0}}C$. The proof of the only-if-part  goes back to \cite[Proposition 2.10]{ZW} and
 is dual to the proof of Proposition~\ref{obj=cpx}.

\smallskip

%Can we characterise the t-structures corresponding to pure-injective cosilting objects?

%%%%%%%%%%%%%%%%%%%%%%%%% 
Assume now that $C$ is a  cosilting object in $\Tcal$ with associated t-structure $\mathfrak{t}=({}^{\perp_{\leq 0}}C,{}^{\perp_{>0}}C)$. If $C$ is pure-injective, then  $\mathbf{y}{C}$ is an injective object in the functor category $\Modr\Tcal^c$. Following the strategy in \cite{St}, 
%which goes back to an idea of I.~Herzog, 
we consider the
%It follows that the functor $\Hom_\Tcal(-,C)$ is naturally equivalent to $\Hom_{\T}(H^0_\mathfrak{t}(-),H^0_\mathfrak{t}(C))$ and, therefore, also the functor $H^0_\mathfrak{t}$ sends pure triangles to short exact sequences. Consequently, by Theorem \ref{BelKr}(1), there is a (unique) exact functor $\overline{H^0_\mathfrak{t}}:\Modr\T^c\longrightarrow \Hcal_\mathfrak{t}$ such that $\overline{H^0_\mathfrak{t}}\circ\mathbf{y}=H^0_\mathfrak{t}$. The following argument is inspired by the proof of \cite[Theorem 6.2]{St}. 
hereditary torsion pair  $({}^{\perp_0}\mathbf{y}C,\mathrm{Cogen}(\mathbf{y}{C}))$ in $\Modr\Tcal^c$ which is  cogenerated by $\mathbf{y}{C}$. 
The quotient category $$\mathcal{G}_C:=\Modr\Tcal^c/{}^{\perp_0}\mathbf{y}C$$ is a Grothendieck category (see \cite[Proposition III.9]{Gabriel}). Its category of injective objects is equivalent to $\Prod(\mathbf{y}C)$.

%, which in turn is equivalent to $\Prod C$.
On the other hand, dually to Proposition~\ref{silting heart}(1),
one verifies that the cohomological functor $H^0_C:\T\longrightarrow \mathcal H_C$ maps $C$ to an injective cogenerator of the heart, hence the category of injective objects in $\Hcal_C$ is $\Prod(H^0_C(C))$.  
Moreover, one can construct a commutative diagram of functors
$$\xymatrix{\T\ar[dr]_{{H^0_C}}\ar[r]^{\mathbf{y}\;}&{\Modr\Tcal^c}\ar[r]^{\;\pi}&\mathcal{G}_C\ar[dl]^{F}\\
&\mathcal H_C  &}$$
where  $\pi:\Modr\Tcal^c\longrightarrow \Gcal_C$ is the quotient functor, and $F$ is a left exact functor between the abelian categories $\mathcal{G}_C$ and $\Hcal_C$. Since $F$ restricts to  an equivalence on the categories of injective objects, it follows that it  is an equivalence   $\mathcal{G}_C\cong\Hcal_C$. In particular, $\Hcal_C$ is a Grothendieck category.

Dualising the  first bijection in Theorem~\ref{NSZ-bijection} we obtain the following result.
\begin{theorem}\label{NSZ-cobijection}\cite[Theorem 3.5 and Corollary 3.8]{AMV3}
If $\T$ is a compactly generated triangulated category, there is a   bijective correspondence between 
\begin{enumerate}
\item[(i)] equivalence classes of cosilting objects;
\item[(ii)] nondegenerate smashing t-structures  whose heart admits a injective cogenerator;
\end{enumerate}
which restricts to a bijection between
\begin{enumerate}
\item[(i')] equivalence classes of pure-injective cosilting objects;
\item[(ii')] nondegenerate smashing t-structures  whose heart is a Grothendieck category.
\end{enumerate}
\end{theorem}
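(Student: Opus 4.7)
My plan is to dualize the proof of Theorem~\ref{NSZ-bijection} point by point, then handle the restriction to the pure-injective case separately. For the first bijection, start with the forward direction: given a cosilting object $C$, the associated t-structure $\mathfrak{t} = ({}^{\perp_{\leq 0}}C,{}^{\perp_{>0}}C)$ is smashing because $\Hom_\T(-,C)$ sends coproducts to products, hence the coaisle ${}^{\perp_{>0}}C$ is closed under coproducts; it is nondegenerate because the intersections $\bigcap_n {}^{\perp_{>0}}C[n]$ and $\bigcap_n {}^{\perp_{\leq 0}}C[n]$ both sit inside ${}^{\perp_{\Zbb}}C$, which vanishes as $C$ cogenerates $\T$. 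Dualizing Proposition~\ref{silting heart}(1), one shows that $H^0_C(C)$ is an injective cogenerator of $\Hcal_C$ by applying $\Hom_\T(X,-)$ to the canonical triangle $W[-1]\to H^0_C(C)[-1]\to V\to W$ attached to $C$, obtaining the required extension vanishing and detection of zero. Two cosilting objects are equivalent precisely when they yield the same t-structure, so the assignment $[C]\mapsto \mathfrak{t}$ is well-defined and injective on equivalence classes.

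For the surjection in (i)$\leftrightarrow$(ii), given a nondegenerate smashing t-structure $\mathfrak{t}=(\Ucal,\Vcal)$ whose heart $\Hcal_\mathfrak{t}$ contains an injective cogenerator $Q$, I would apply Brown representability. Smashing-ness of $\mathfrak{t}$ yields, via [AMV3, Lemma~3.3] cited in the paper, that $H^0_\mathfrak{t}$ preserves coproducts in $\T$. Consequently, the contravariant cohomological functor $\Hom_{\Hcal_\mathfrak{t}}(H^0_\mathfrak{t}(-),Q)\colon \T^{op}\to \Ab$ sends coproducts to products, so it is represented by some $C\in\T$ with $\Hom_\T(-,C)\cong \Hom_{\Hcal_\mathfrak{t}}(H^0_\mathfrak{t}(-),Q)$. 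Since $Q$ is an injective cogenerator, the vanishing of $\Hom_\T(X,C[i])$ translates for $i>0$ into $H^0_\mathfrak{t}(X[i]) = 0$ and for $i\leq 0$ into a cogeneration condition; these match the defining conditions for $\Ucal$ and $\Vcal$, verifying ${}^{\perp_{\leq 0}}C = \Ucal$ and ${}^{\perp_{>0}}C = \Vcal$. Hence $C$ is cosilting, its equivalence class lies over $\mathfrak{t}$, and the correspondence in (i)$\leftrightarrow$(ii) is bijective.

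For the restriction (i')$\leftrightarrow$(ii'), the direction (i')$\to$(ii') is already carried out in the discussion preceding the theorem: pure-injectivity of $C$ makes $\mathbf{y}C$ an injective object of $\Modr\Tcal^c$, the Gabriel quotient $\mathcal{G}_C = \Modr\T^c/{}^{\perp_0}\mathbf{y}C$ is Grothendieck, and the commutative diagram yields an equivalence $F\colon \mathcal{G}_C \to \Hcal_C$, so $\Hcal_C$ is a Grothendieck category. The reverse direction (ii')$\to$(i') is the main obstacle: starting from $\mathfrak{t}$ with Grothendieck heart, I need to show the cosilting object $C$ obtained above is pure-injective. My plan is to exhibit the coaisle $\Vcal = {}^{\perp_{>0}}C$ as a definable subcategory of $\T$; once this is done, Example~\ref{defble}(2) forces every object of $\Prod(C)\subseteq \Vcal$, and in particular $C$ itself, to be pure-injective. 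To establish definability, I would combine the already-known closure under products and coproducts of $\Vcal$ with closure under directed homotopy colimits, which should follow from exactness of filtered colimits in the Grothendieck category $\Hcal_\mathfrak{t}$ transported along $H^0_\mathfrak{t}$, and invoke Theorem~\ref{closure}. The delicate point will be justifying the translation between filtered colimits in $\Hcal_\mathfrak{t}$ and directed homotopy colimits in $\T$, which is where the enhancement hypothesis (implicit via a derivator structure, available for algebraic compactly generated triangulated categories) enters and is the technical heart of the argument.
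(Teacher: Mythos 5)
Your treatment of the bijection between (i) and (ii) correctly dualises Theorem~\ref{NSZ-bijection}, and for (i')$\to$(ii') you correctly reuse the Gabriel quotient construction laid out in the discussion preceding the theorem; up to tracking the shift in the representing object produced by Brown representability, this part of the plan works. The real problem is the direction (ii')$\to$(i'). Your route is to prove that $\Vcal$ is definable by verifying closure under products, pure subobjects and directed homotopy colimits and invoking Theorem~\ref{closure}, then concluding pure-injectivity of $C$ from Example~\ref{defble}(2). But Theorem~\ref{closure} is stated only under the additional hypothesis that $\T$ is the underlying category of a strong and stable derivator, and Theorem~\ref{NSZ-cobijection} carries no such hypothesis: it asserts the bijection for an arbitrary compactly generated $\T$, which need not be algebraic and need not admit a derivator enhancement. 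The paper itself makes this clear just before Theorem~\ref{closure}, noting that the module-theoretic characterisation of definability through closure conditions is only available in $\T$ once the derivator is present. You acknowledge the enhancement but describe it as ``implicit'', which it is not --- it is a genuine extra assumption. Your argument therefore proves the statement only at the level of generality of Theorem~\ref{R}, not the level claimed here. The source \cite{AMV3} cited for the theorem obtains (ii')$\to$(i') by a different route that works inside the functor category $\Modr\T^c$ without any appeal to homotopy colimits; you need either such a direct argument for the pure-injectivity of $C$, or to add the derivator hypothesis explicitly and accept a weaker conclusion than the one stated.
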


The question of finding conditions ensuring that the heart of a t-structure is a Grothendieck category has received a lot of attention in recent years, see e.g.~\cite{PaSa,SSV}. The following condition plays an important role in this context.

\begin{definition} A t-structure $\mathfrak{t}=(\Ucal,\Vcal)$,  or a cosuspended TTF triple $(\Ucal,\Vcal,\Wcal)$  in $\T$ are said to be \emph{homotopically smashing} if the class $\Vcal$ is closed under directed homotopy colimits.   \end{definition}

As shown in \cite[Proposition 5.6]{SSV}, the  condition ``homotopically smashing'' on a t-structure $\mathfrak{t}$ is weaker than ``compactly generated'' (to see this one can also combine Example~\ref{defble}(1)  with Theorem~\ref{closure}), but stronger than ``smashing''.
In presence of an enhancement of $\T$ by a strong and stable derivator, it implies that the heart  $\Hcal_{\mathfrak{t}}$ has exact directed colimits, see \cite[Theorem B]{SSV}. Moreover, it is proved in \cite[Lemma 4.3]{L} that the cohomological functor $H^0_{\mathfrak{t}}$ maps a set of representatives  of $\T^c$ to a generating set of the heart, hence $\Hcal_{\mathfrak{t}}$ is a Grothendieck category. Combining this with  Example~\ref{defble}(2) and Theorems~\ref{closure} and~\ref{NSZ-cobijection} one obtains the following result. 

\begin{theorem}\cite[Theorem 4.6]{L}\label{R} Assume that $\T$ is a compactly generated triangulated category which  is the underlying category of a strong and stable derivator. The following statements are equivalent for a nondegenerate  t-structure $\mathfrak{t}=(\Ucal, \Vcal)$.
\begin{enumerate}
\item $\mathfrak{t}$ is a cosilting t-structure given by  a pure-injective cosilting object.
\item $\Vcal$ is a definable subcategory of $\Tcal$.
\item $\mathfrak{t}$ is {homotopically smashing}.
\item  $\mathfrak{t}$ is {smashing} and its heart is a Grothendieck category.
\end{enumerate}
\end{theorem}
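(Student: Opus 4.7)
The plan is to establish the cyclic chain of implications (1) $\Rightarrow$ (2) $\Rightarrow$ (3) $\Rightarrow$ (4) $\Rightarrow$ (1), closing the loop via Theorem \ref{NSZ-cobijection}. The three ingredients doing the conceptual work are Example \ref{defble}(2) (to convert pure-injective cosilting data into a definable coaisle), Theorem \ref{closure} (to pass between definability and closure under directed homotopy colimits) and Theorem \ref{NSZ-cobijection} (to recover a pure-injective cosilting object from a smashing t-structure with Grothendieck heart). The derivator enhancement will only be essential at the step bridging (3) to (4).

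For (1) $\Rightarrow$ (2), given a pure-injective cosilting object $C$ with $\Vcal = {}^{\perp_{>0}}C$, I would take $\mathscr{C} := \Prod(C)$. Since pure-injectivity is preserved under products and summands, every object of $\mathscr{C}$ is pure-injective, and the identity ${}^{\perp_{>0}}\mathscr{C} = \Vcal$ is immediate (the inclusion $\subseteq$ follows by the fact that $C \in \mathscr{C}$, and $\supseteq$ from $\Hom$ carrying products in the second slot to products of abelian groups). Moreover $\Vcal$ is automatically closed under products, being the coaisle of a t-structure and hence equal to $\Ucal^{\perp_0}$. Example \ref{defble}(2) then gives definability of $\Vcal$. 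The implication (2) $\Rightarrow$ (3) is a direct read-off from Theorem \ref{closure}: definable subcategories are in particular closed under directed homotopy colimits, which is exactly the homotopically smashing condition.

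The step (3) $\Rightarrow$ (4) is where I expect the main obstacle, since it is the only part of the proof that really exploits the strong and stable derivator enhancement. Assuming $\mathfrak{t}$ is homotopically smashing, I would first invoke \cite[Proposition 5.6]{SSV}, mentioned in the paragraph preceding the theorem, to conclude that $\mathfrak{t}$ is in particular smashing. To upgrade the heart to a Grothendieck category I would combine two separate consequences of the homotopically smashing hypothesis: on one hand, \cite[Theorem B]{SSV} forces $\Hcal_{\mathfrak{t}}$ to have exact directed colimits (the AB5 axiom), and on the other, \cite[Lemma 4.3]{L} produces a generating set of $\Hcal_{\mathfrak{t}}$ as the image under $H^0_{\mathfrak{t}}$ of a skeleton of $\T^c$. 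Together with the general fact that hearts of t-structures are abelian, this identifies $\Hcal_{\mathfrak{t}}$ as a Grothendieck category; both of the cited results genuinely require the derivator formalism in order to manipulate homotopy (co)limits coherently.

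Finally, for (4) $\Rightarrow$ (1), I would simply appeal to Theorem \ref{NSZ-cobijection}: the second of its two bijections identifies nondegenerate smashing t-structures with Grothendieck heart with equivalence classes of pure-injective cosilting objects, so $\mathfrak{t}$ must arise as the t-structure of such a cosilting object, closing the cycle and completing the proof.
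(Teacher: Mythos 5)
Your cyclic chain $(1)\Rightarrow(2)\Rightarrow(3)\Rightarrow(4)\Rightarrow(1)$ assembles exactly the ingredients the paper indicates --- Example~\ref{defble}(2), Theorem~\ref{closure}, \cite[Proposition 5.6]{SSV}, \cite[Theorem B]{SSV}, \cite[Lemma 4.3]{L}, and Theorem~\ref{NSZ-cobijection} --- in essentially the same way, so the proof is correct and matches the paper's approach. One small caveat: the derivator enhancement is not confined to the step $(3)\Rightarrow(4)$, since Theorem~\ref{closure}, which you invoke for $(2)\Rightarrow(3)$, itself carries the derivator hypothesis, and indeed condition (3) is only formulated in derivator-theoretic terms.
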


In particular, the theorem applies to  nondegenerate compactly generated t-structures.
%, which are known to be  homotopically smashing by \cite[Proposition 5.6]{SSV} (or combine Example~\ref{defble}(1)  with Theorem~\ref{closure}).

\begin{corollary}\label{cg} Assume that $\T$ is a compactly generated triangulated category which  is the underlying category of a strong and stable derivator. Then every compactly generated, nondegenerate  t-structure $\mathfrak{t}$  in $\T$  is a cosilting t-structure given by  a pure-injective cosilting module, and the heart of $\mathfrak{t}$  is a Grothendieck category.
\end{corollary}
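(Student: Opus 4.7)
The plan is to verify condition (2) of Theorem~\ref{R}: namely, that the coaisle $\Vcal$ of $\mathfrak{t}$ is a definable subcategory of $\T$. Once this is established, the equivalence of (1), (2), and (4) in Theorem~\ref{R} immediately yields both assertions of the corollary, since (1) states that $\mathfrak{t}$ is a cosilting t-structure given by a pure-injective cosilting object, and (4) records that its heart is a Grothendieck category.

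To verify definability, I would simply unfold the hypothesis. By definition, $\mathfrak{t}$ being compactly generated means there exists a set $\Scal \subseteq \T^c$ with $(\Ucal,\Vcal) = ({}^{\perp_0}(\Scal^{\perp_0}),\, \Scal^{\perp_0})$; in particular, $\Vcal = \Scal^{\perp_0}$ is the orthogonal to a set of compact objects. Example~\ref{defble}(1) then applies verbatim to give that $\Vcal$ is definable in $\T$.

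With $\Vcal$ definable, condition (2) of Theorem~\ref{R} is satisfied for the nondegenerate t-structure $\mathfrak{t}$, so conditions (1) and (4) hold as well, which is precisely the content of the corollary. I do not expect a substantive obstacle here: the argument reduces to an essentially formal invocation of Theorem~\ref{R}, the only observation needed being that a t-structure whose coaisle is cut out by Hom-vanishing against a set of compact objects is, by construction, of the form covered by Example~\ref{defble}(1). The real work was done in establishing Theorem~\ref{R}, which in turn relies on the enhancement by a strong and stable derivator through Theorem~\ref{closure} to pass from smashing-type closure properties to definability.
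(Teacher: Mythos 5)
Your argument is correct and matches the paper's own reasoning: the paper notes (in the paragraph before Theorem~\ref{R}) that a compactly generated t-structure has definable coaisle by Example~\ref{defble}(1), and then invokes Theorem~\ref{R}; you enter the equivalence at condition (2) rather than condition (3), but these are the same observation. The only cosmetic point is that the paper defines ``compactly generated'' in terms of a subcategory of $\T^c$ rather than a set, but since $\T^c$ is skeletally small this makes no difference.
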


Further results showing that hearts of compactly generated t-structures are usually Grothendieck categories are given in \cite[Theorem 5.4.2]{Bondarko2016} and \cite[Corollary D]{SSV}.
An example of a t-structure as in Theorem~\ref{R} which is not compactly generated will be given below in Example~\ref{notcoft}. 

\smallskip

If $\T$ is algebraic,  every t-structure as in Theorem~\ref{R} has a right-adjacent co-t-structure by  Theorem~\ref{ff}. One can then dualise the  second bijection in Theorem~\ref{NSZ-bijection} as follows.

\begin{theorem}\label{NSZ-cobijection2} If $\T$ is an algebraic compactly generated triangulated category, 
there is a   bijective correspondence between 
\begin{enumerate}
\item[(i)] equivalence classes of pure-injective cosilting objects;
\item[(ii)] nondegenerate cosuspended TTF-triples $(\Ucal,\Vcal,\Wcal)$ which are homotopically smashing. \end{enumerate}
\end{theorem}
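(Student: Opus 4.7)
The plan is to dualise the strategy used for the second bijection of Theorem~\ref{NSZ-bijection}, with Theorem~\ref{R} doing the heavy lifting in place of the results on projective generators and Brown representability. First I would set up the forward map: given a pure-injective cosilting object $C$, let $\mathfrak{t}=(\Ucal,\Vcal)=({}^{\perp_{\le 0}}C,{}^{\perp_{>0}}C)$ be the associated cosilting t-structure. Dually to the discussion after Definition~\ref{PV}, $\mathfrak t$ is nondegenerate, since $C$ cogenerates $\T$. By Theorem~\ref{R}, pure-injectivity of $C$ ensures that $\Vcal$ is a definable subcategory of $\T$, and in particular closed under directed homotopy colimits by Theorem~\ref{closure}.

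Next I would build the right-adjacent co-t-structure. The class $\Vcal$ is closed under extensions (as the coaisle of a t-structure) and cosuspended, that is, $\Vcal[-1]\subseteq\Vcal$. Since $\T$ is algebraic, Theorem~\ref{ff}(2) applies to the definable class $\Vcal$: it is precovering, and being closed under extensions, it is the aisle of a torsion pair $(\Vcal,\Wcal)$, where necessarily $\Wcal=\Vcal^{\perp_0}$. The cosuspended condition on $\Vcal$ then identifies this torsion pair as a co-t-structure. Assembling the pieces, $(\Ucal,\Vcal,\Wcal)$ is a cosuspended TTF triple which is nondegenerate (since $\mathfrak t$ is) and homotopically smashing (since $\Vcal$ is definable).

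For the inverse direction, given a nondegenerate, homotopically smashing cosuspended TTF triple $(\Ucal,\Vcal,\Wcal)$, the pair $(\Ucal,\Vcal)$ is a nondegenerate t-structure whose coaisle is closed under directed homotopy colimits. Theorem~\ref{R} then guarantees that $(\Ucal,\Vcal)$ is the cosilting t-structure of a pure-injective cosilting object $C$, unique up to equivalence. The two assignments are well-defined on equivalence classes because an equivalence class of cosilting objects is by definition determined by $\Vcal={}^{\perp_{>0}}C$, and because in any torsion pair the torsion-free class is the right orthogonal of the torsion class, so the third component $\Wcal=\Vcal^{\perp_0}$ of the TTF triple is determined by $\Vcal$ alone. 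This last observation makes the two constructions mutually inverse.

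I expect the only mildly delicate step to be the passage from ``pure-injective cosilting'' to ``$\Vcal$ definable and closed under extensions'', which invokes Theorem~\ref{R} and, crucially, the algebraic enhancement needed to apply Theorem~\ref{ff}(2) to produce the adjacent co-t-structure; everything else is a bookkeeping argument about orthogonal classes and shift closures. A small point to verify carefully is that the $\Wcal$ obtained from Theorem~\ref{ff}(2) genuinely realises a co-t-structure rather than only a torsion pair, but this is immediate from $\Vcal[-1]\subseteq\Vcal$.
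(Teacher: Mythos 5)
Your proof is correct and follows precisely the route the paper sketches: use Theorem~\ref{R} to pass between pure-injective cosilting objects and nondegenerate homotopically smashing t-structures, then apply Theorem~\ref{ff}(2) (valid since $\T$ is algebraic) to adjoin the right-adjacent co-t-structure, noting that $\Wcal=\Vcal^{\perp_0}$ is determined by $\Vcal$ so the constructions are mutually inverse. The only point worth stating explicitly is that the algebraic hypothesis is what guarantees the strong stable derivator enhancement needed not only for Theorem~\ref{ff}(2) but already for Theorem~\ref{R}.
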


%Does a cosilting t-structure $\mathfrak{t}=(\Ucal,\Vcal)$ have a (right) adjacent co-t-structure? \blue{for pure-injective cosilting use \ref{ff} and \ref{Ncor}!} In order to dualize the argument in Proposition~\ref{silting heart} we would need a dual of Theorem~\ref{CGR}, which,  to our knowledge,  is only available under some set-theoretic assumptions, see~\cite{CGR}. For $\mathsf D(A)$, however, this problem can be circumvented by using that every object  can be built as a Milnor limit of its standard truncations. Then, since the coaisle $\Vcal$ contains a shift of the standard coaisle,   the smallest colocalising subcategory of  $\mathsf D(A)$ containing  $\Vcal$  is $\mathsf D(A)$ itself, and thus it satisfies Brown representability. Applying Theorems~\ref{Bondarko}(2) and~\ref{NSZ-cobijection}  we obtain 

For the  dual version
 of Theorem~\ref{bijections general} we need the following terminology. 

\begin{definition}
%\begin{enumerate}
%\item 
A co-t-structure $(\Vcal,\Wcal)$, or a cosuspended TTF triple $(\Ucal,\Vcal,\Wcal)$ 
in $\mathsf D(A)$ are said to be \emph{cointermediate} if there are integers $n\geq m$ such that $\mathsf D^{\geq n}\subseteq \Vcal\subseteq \mathsf D^{\geq m}.$
% \: (\text{respectively, }\,  K_{\leq a}\subseteq \Ucal\subseteq K_{\leq b}).$
%\item A t-structure $(\Vcal^{\leq 0},\Vcal^{\geq 0})$ is said to be \textbf{silting} if it is intermediate and there is a generator $\sigma$ of $D(A)$ such that  $\Vcal^{\leq 0}\cap {}^{\perp_0}(\Vcal^{\leq 0}[1]) = Add(\sigma).$ It is furthermore said to be \textbf{$n$-silting} if $D^{\leq -n+1}\subseteq \Vcal^{\leq 0}\subseteq D^{\leq 0}$.
%\end{enumerate}
\end{definition}

Notice that a cosuspended TTF triple $(\Ucal,\Vcal,\Wcal)$ 
is {cointermediate} if and only if $(\Ucal,\Vcal)$ is an intermediate t-structure. Similarly,
a suspended TTF triple $(\Ucal,\Vcal,\Wcal)$ 
is {intermediate} if and only if $(\Ucal,\Vcal)$ is an cointermediate co-t-structure.

\begin{theorem}\cite[Theorem 3.13]{MV}\label{bijections cogeneral} There is a bijection between
\begin{enumerate}
\item[(i)] equivalence classes of (bounded) cosilting complexes in $\mathsf D(A)$;
\item[(ii)] cointermediate cosuspended TTF triples in  $\mathsf D(A)$.\end{enumerate}
\end{theorem}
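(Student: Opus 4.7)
My plan is to deduce this from Theorem~\ref{NSZ-cobijection2}, mirroring how Theorem~\ref{bijections general} was derived from Theorem~\ref{NSZ-bijection}. By Proposition~\ref{FJ}, a bounded cosilting complex is precisely a pure-injective cosilting object representable by a bounded complex of injectives, so it suffices to show that the bijection of Theorem~\ref{NSZ-cobijection2} restricts to one between equivalence classes of bounded cosilting complexes and cointermediate cosuspended TTF triples.

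For $(i)\Rightarrow(ii)$, let $\sigma$ be a bounded cosilting complex concentrated in degrees $[m,n]$. By Proposition~\ref{FJ} it is a pure-injective cosilting object, and $\Vcal={}^{\perp_{>0}}\sigma$ is the coaisle of the associated cosuspended TTF. Since $\sigma[i]$ is a bounded complex of injectives (hence K-injective) concentrated in degrees $\le n-1$ for $i>0$, a standard truncation argument shows $\mathsf D^{\ge n+1}\subseteq \Vcal$. The condition (C2), requiring $\Prod(\sigma)$ to generate $\mathsf K^b(\mathrm{Inj}(A))$ as a thick subcategory, then forces the opposite cohomological bound $\Vcal\subseteq \mathsf D^{\ge m}$, so the associated TTF is cointermediate.

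For $(ii)\Rightarrow(i)$, the plan is to dualise Lemma~\ref{wei}. Given a cointermediate cosuspended TTF $(\Ucal,\Vcal,\Wcal)$ with $\mathsf D^{\ge n}\subseteq \Vcal\subseteq \mathsf D^{\ge 0}$, take $W$ an injective cogenerator of $\Modr A$ viewed as a stalk in degree $0$. The bound $\Vcal\subseteq \mathsf D^{\ge 0}$ gives $W[1]\in \Wcal$ (since $\Hom_{\mathsf D(A)}(V,W[1])\cong \Hom_A(H^{-1}(V),W)=0$ for $V\in \Vcal$), while $\mathsf D^{\ge n}\subseteq \Vcal$ places $W[-n]$ in $\Vcal$. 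Iterated co-t-structure approximations for $(\Vcal,\Wcal)$ starting from $W$ then yield a finite sequence of triangles
\[
C_i\longrightarrow Z_{i+1}\longrightarrow Z_i\longrightarrow C_i[1],\quad 0\le i<n,
\]
with $Z_0=W$ and $C_i$ in the coheart $\Ccal=\Vcal[1]\cap \Wcal$; a Hom-vanishing argument dual to that in Lemma~\ref{wei} forces the last triangle to split and hence places $Z_n$ in $\Ccal$ as well. Setting $C:=\prod_{i=0}^n C_i$ then gives, dually to Proposition~\ref{properties silting}(3), a pure-injective object with $\Prod(C)=\Ccal$ cogenerating $\T$, which is the cosilting object corresponding under Theorem~\ref{NSZ-cobijection2} to $(\Ucal,\Vcal,\Wcal)$ (so that in retrospect the TTF is automatically homotopically smashing and nondegenerate). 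The cointermediate bounds force each $C_i$ to lie in $\mathsf K^b(\mathrm{Inj}(A))$, so the finite product $C$ does too, giving a bounded cosilting complex.

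The principal obstacle is the dualisation of Lemma~\ref{wei}: because $\Vcal$ plays the opposite role in a cosuspended TTF, the shifts must be tracked carefully so that successive co-t-structure approximations land in the correct coheart $\Vcal[1]\cap \Wcal$ rather than shifted variants, and the splitting of the final triangle requires a Hom-vanishing computation using both the orthogonality between $\Ucal$ and $\Vcal$ and the fact that $W[-n]\in \Vcal$. A secondary technical point is ensuring that coheart objects of a cointermediate cosuspended TTF are representable by bounded complexes of injective modules, which should follow from pure-injectivity of heart objects combined with the cohomological bounds forced by $\mathsf D^{\ge n}\subseteq \Vcal\subseteq \mathsf D^{\ge 0}$.
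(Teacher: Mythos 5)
The paper itself proves this theorem only by citation to \cite{MV}, so I am comparing your plan against the paper's proof of the silting analogue (Theorem~\ref{bijections general}), which you are deliberately dualising. The overall route — identify bounded cosilting complexes with cosilting objects represented by bounded complexes of injectives via Proposition~\ref{FJ}, then restrict the bijection of Theorem~\ref{NSZ-cobijection2} by a dual of Lemma~\ref{wei} — is the right one and almost certainly matches \cite{MV}. However, there is a real gap at the point you flag as a ``secondary technical point'', and your proposed fix does not work. You need the coheart objects $C_i$ (hence $C=\prod C_i$) to lie in $\mathsf K^b(\mathrm{Inj}(A))$, but your suggested justification appeals to ``pure-injectivity of heart objects''. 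Pure-injectivity is not available at this stage: in the paper's logic, pure-injectivity of a cosilting object comes \emph{from} its being a cosilting complex (Proposition~\ref{FJ}(1)), which in turn requires it to already be a bounded complex of injectives — so invoking pure-injectivity here is circular. Similarly, your earlier assertion that $C$ is pure-injective ``dually to Proposition~\ref{properties silting}(3)'' is not justified; part (3) of that proposition gives $\Vcal[1]\cap\Wcal=\Prod(C)$ but says nothing about pure-injectivity, and the Milnor-colimit machinery behind parts (1)--(2) has no straightforward dual for products.

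The gap is fillable, but by a different and more elementary argument than the one you sketch: from $\mathsf D^{\ge n}\subseteq\Vcal\subseteq\mathsf D^{\ge 0}$ one deduces first that the coheart $\Ccal=\Vcal[1]\cap\Wcal$ has cohomology concentrated in a bounded range (using $\Ccal\subseteq\Vcal[1]\subseteq\mathsf D^{\ge -1}$ and $\Ccal\subseteq\Wcal\subseteq(\mathsf D^{\ge n})^{\perp_0}\subseteq\mathsf D^{\le n-1}$); and second, that any $C\in\Ccal$ has finite injective dimension, because $M[-n]\in\mathsf D^{\ge n}\subseteq\Vcal$ and $C[i]\in\Wcal$ for $i\ge 0$ force $\Hom_{\mathsf D(A)}(M,C[j])=0$ for every module $M$ and every $j\ge n$. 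These two facts together let one truncate an injective resolution of $C$ and conclude $C\in\mathsf K^b(\mathrm{Inj}(A))$; pure-injectivity of $C$ and the homotopically smashing / nondegenerate properties of the TTF triple then follow \emph{afterwards}, via Proposition~\ref{FJ} and Theorem~\ref{R}, in the order you call ``retrospect''. You should also re-check the shift bookkeeping in the dual of Lemma~\ref{wei}: since $\Wcal$ is suspended, placing $W[1]$ in $\Wcal$ is weaker than what you need, and the objects $Z_i$ should be tracked in $\Wcal$ (not a shift of it) with $W[-n]$ landing in $\Vcal[1]$ rather than $\Vcal$; this is exactly the kind of detail you flag, and it does need to be worked out for the splitting of the final triangle to go through.
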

% \blue{mention \cite[Theorem 2.17]{ZW}?}
Finally, there is also a dual version of Theorem~\ref{HKM} describing the zero-cohomologies of two-term cosilting complexes. This gives rise to the notion of a  \emph{cosilting module} first introduced in \cite{BPop}. Cosilting modules parametrise the torsion pairs  in $\Modr A$ whose torsion-free class is definable. For details we refer to \cite{BZ,ZW,abundance}.

\begin{example}\cite[Example 5.4]{AH1}\cite[Example 3.12]{MV} \cite[Example 4.10]{abundance}\label{notcoft}
Let $(A,\mathfrak m)$ be a valuation domain whose maximal ideal $\mathfrak m=\mathfrak m^2$ is idempotent and non-zero.
Then $S=A/\mathfrak m$  is a  cosilting module cogenerating the definable torsion-free class $\Add(S)=\{M\in\Modr A\,\mid\,M\mathfrak m=0\}$. Notice that the  torsion pair  $(\Ker\Hom_A(-,S),\Add(S))$  is not hereditary, because
 $\Add(S)$ does not contain the injective envelope of $S$. From \cite[Lemma 3.7 and 4.2]{AH1} we infer that there is no set of finitely presented $A$-modules generating this torsion pair.
%In other words, $S$ is a   cosilting module which is not of cofinite type.
Then it follows from \cite[Theorem 2.3]{BP} that its HRS-tilt $\mathfrak{t}=(\Ucal,\Vcal)$ can't be compactly generated. On the other hand,  as a  cosilting module, $S$ is associated to a two-term cosilting complex whose corresponding t-structure is precisely $\mathfrak{t}$. %One can also see directly that $\Vcal=\{X\in\mathsf D^{\le 0}\mid H^0(X)\in\Add S\}$ is a definable subcategory of $\mathsf D(A)$ \blue{check}.  
Hence 
$\mathfrak{t}$ is a cosilting t-structure which is not compactly generated and whose heart is a Grothendieck category.
\end{example}

%%%%%%%%%%%%%%%%%%%%%%%%%%%%%%%%%%%%%%%%%%%%%%%%%%%%%%%%%%%%%%%%
\section{Classification results}
%%%%%%%%%%%%%%%%%%%%%%%%%%%%%%%%%%%
In this section we  present classification results for silting and cosilting complexes. We deal with two cases: commutative noetherian rings, and finite-dimensional algebras.  In the first case, we have the same phenomenon described in \cite{abundance} for modules:  cosilting objects turn out to be more accessible, and the classification of silting objects is then deduced via duality. 
%Furthermore, we exhibit an example of an unbounded silting object in the derived category of $\mathbb Z$.

In the second case, we focus on silting-discrete algebras,   the class of finite-dimensional algebras for which all silting complexes are compact and their number is finite, up to equivalence and shift. We discuss general properties of such algebras and  review some classification results.

\subsection{Silting-cosilting duality}
%%%%%%%%%%%%%%%%%%%%%%%%%%%%%%%
Let $A$ be an arbitrary ring. Let  $k$ be
 a commutative ring such that  $A$ is a $k$-algebra, and  $W$  an injective cogenerator in $\Modr k$.  For example, one can choose $k=\mathbb Z$ and  $W= \mathbb{Q}/\mathbb{Z}$.
 
 We will use the duality functors  $(-)^+ = \RHom_k(-,W)$
  %. By the injectivity of $W$, this functor is represented by $\Hom_{\mathcal{K}(k)}(-,W)$, and 
between the derived categories $\mathsf D(A)$ and $\mathsf D(A^{op})$ of right and left $A$-modules. 
We will also need $(-)^*:=\RHom_A(-,A)$, considered as a functor  both on $\mathsf D(A)$ and $\mathsf D(A^{op})$, depending on the context. 
%Restricted to the module category, we get the projective duality functor, which we also denote $(-)^*:=\Hom_A(-,A)$. 
Recall that $(-)^*$ induces a duality between $\mathrm{proj}(A)$ and  $\mathrm{proj}(A^{op})$ which extends, via d\' evissage, to a duality between $\mathsf{K}^b(\mathrm{proj}(A))$ and  $K^b(\mathrm{proj}(A^{op}))$, or in other terms,  between $\mathsf D(A)^c$ and $\mathsf D(A^{op})^c$.

The following formula proved in \cite{AH2} relates the two duality functors considered above:
\begin{equation}\label{elementarydual} \Hom_{\mathsf D(A)}(S,X)\cong\Hom_{\mathsf D(A^{op})}(S^*,X^+)\:\text{ for all }\:
S\in \mathsf{K}^b(\mathrm{proj}(A))\text{ and }X\in\mathsf D(A)\end{equation}
Thus, if
$\Scal$ is a set of objects in $\mathsf{K}^b(\mathrm{proj}(A))$, and $\Scal^*$ is the set in $K^b(\mathrm{proj}(A^{op}))$ given by the objects $S^*$ with $S\in\Scal$, then an object $X$ in ${\mathsf D(A)}$ belongs to the orthogonal category $\Scal^{\perp_0}$ if and only if $X^+$ belongs to $(\Scal^*)^{\perp_0}$. 

Following an idea from \cite[Theorem 4.10]{StPo}, we consider the following assignment.  
\begin{theorem}\label{T:torsionpairduality}\cite{AH2}
	The map $\Psi$ which assigns to a torsion pair   in 
$\mathsf D(A)$ generated by a set of compact objects $\Scal$ the torsion pair in  $\mathsf D(A^{op})$ generated by $\Scal^*$
defines  a bijection between
	\begin{enumerate}
\item[(i)] compactly generated torsion pairs in   $\mathsf D(A)$,
\item[(ii)] compactly generated torsion pairs in  $\mathsf D(A^{op})$,
\end{enumerate}
which restricts to a bijection  between
	\begin{enumerate}
\item[(i)] compactly generated t-structures in   $\mathsf D(A)$,
\item[(ii)] compactly generated co-t-structures in  $\mathsf D(A^{op})$,
\end{enumerate}
and maps intermediate t-structures to cointermediate co-t-structures, and vice versa.
\end{theorem}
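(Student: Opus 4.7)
The plan is to give an intrinsic description of the coaisle $\Vcal' = (\Scal^*)^{\perp_0}$ of $\Psi(\mathfrak{t})$ that avoids any reference to a particular generating set $\Scal$. Given a compactly generated torsion pair $\mathfrak{t} = (\Ucal, \Vcal)$ in $\mathsf{D}(A)$, the formula (\ref{elementarydual}) applied in $\mathsf{D}(A^{op})$ to the compact object $S^*$ yields, for every $Y' \in \mathsf{D}(A^{op})$,
\[ \Hom_{\mathsf{D}(A^{op})}(S^*, Y') \cong \Hom_{\mathsf{D}(A)}(S^{**}, (Y')^+) \cong \Hom_{\mathsf{D}(A)}(S, (Y')^+), \]
where the last step uses the biduality $S^{**} \cong S$ for compact $S$. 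Hence $Y' \in (\Scal^*)^{\perp_0}$ if and only if $(Y')^+ \in \Scal^{\perp_0} = \Vcal$, so the coaisle of $\Psi(\mathfrak{t})$ admits the intrinsic description
\[ \Vcal' = \{Y' \in \mathsf{D}(A^{op}) : (Y')^+ \in \Vcal\}, \]
which depends only on $\Vcal$. This proves well-definedness of $\Psi$. The symmetric construction provides a candidate inverse $\Psi^{-1}$, and by the same calculation, $X \in \Vcal$ iff $\Hom(S, X) = 0$ iff $\Hom(S^*, X^+) = 0$ iff $X^+ \in \Vcal'$, which is exactly the defining condition for $X$ to lie in the coaisle of $\Psi^{-1}(\Psi(\mathfrak{t}))$; this shows $\Psi^{-1} \circ \Psi = \mathrm{id}$, and the other composition is handled identically.

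For the t-structure versus co-t-structure restriction, I use that the contravariant functor $(-)^+$ satisfies $(Y'[1])^+ = (Y')^+[-1]$. If $\Vcal[-1] \subseteq \Vcal$ (the defining condition of a t-structure), then for $Y' \in \Vcal'$ we have $(Y'[1])^+ = (Y')^+[-1] \in \Vcal[-1] \subseteq \Vcal$, so $Y'[1] \in \Vcal'$. Thus $\Vcal'[1] \subseteq \Vcal'$, equivalently $\Ucal'[-1] \subseteq \Ucal'$, which is the co-t-structure condition. The converse follows by applying the same argument to $\Psi^{-1}$.

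Finally, suppose $\mathsf{D}^{\le n} \subseteq \Ucal \subseteq \mathsf{D}^{\le m}$ with $n \le m$, equivalently $\mathsf{D}^{> m} \subseteq \Vcal \subseteq \mathsf{D}^{> n}$. Since $W$ is an injective cogenerator over $k$, the identity $H^i((Y')^+) \cong \Hom_k(H^{-i}(Y'), W)$ shows that $(Y')^+ \in \mathsf{D}^{>\ell}$ if and only if $Y' \in \mathsf{D}^{<-\ell}$. Combined with the intrinsic description of $\Vcal'$: if $Y' \in \mathsf{D}^{<-m}$ then $(Y')^+ \in \mathsf{D}^{>m} \subseteq \Vcal$, so $Y' \in \Vcal'$, giving $\mathsf{D}^{<-m} \subseteq \Vcal'$ and hence $\Ucal' \subseteq \mathsf{D}^{\ge -m}$; conversely, $Y' \in \Vcal'$ forces $(Y')^+ \in \Vcal \subseteq \mathsf{D}^{> n}$, whence $Y' \in \mathsf{D}^{<-n}$ by the cogenerator property, so $\Vcal' \subseteq \mathsf{D}^{<-n}$ and $\mathsf{D}^{\ge -n} \subseteq \Ucal'$. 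Assembling, $\mathsf{D}^{\ge -n} \subseteq \Ucal' \subseteq \mathsf{D}^{\ge -m}$ with $-n \ge -m$, which is exactly the cointermediate condition. The converse again follows by symmetry. The main hurdle I anticipate is establishing the clean intrinsic formula $\Vcal' = \{Y' : (Y')^+ \in \Vcal\}$; once this is secured, every other assertion reduces to a direct computation with $(-)^+$ and the shift.
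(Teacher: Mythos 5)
Your intrinsic description $\Vcal' = \{Y' : (Y')^+ \in \Vcal\}$ is the right way to organise the argument, and the first two parts --- well-definedness, the mutual inverse, and the suspended/cosuspended exchange via $(Y'[1])^+ = (Y')^+[-1]$ --- are correct; this is exactly the route the paper points to through formula~(\ref{elementarydual}) and the reference to \cite[Theorem 4.10]{StPo}.

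The final paragraph, however, contains a genuine error. From $\Vcal' \subseteq \mathsf D^{<-n}$ you infer $\mathsf D^{\ge -n} \subseteq \Ucal' = {}^{\perp_0}\Vcal'$. That inference would require $\Hom(\mathsf D^{\ge -n}, \mathsf D^{<-n}) = 0$, which fails for every non-semisimple ring: with $M$ a module placed in degree $-n$ and $N$ placed in degree $-n-1$ one has $\Hom(M[n],N[n+1]) = \Ext^1(M,N)\neq 0$ for suitable $M,N$. All that $\Vcal' \subseteq \mathsf D^{<-n}$ yields is $\Ucal' \supseteq {}^{\perp_0}(\mathsf D^{<-n})$, and ${}^{\perp_0}(\mathsf D^{<-n})$ is a \emph{proper} subclass of $\mathsf D^{\ge -n}$ in general. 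Note the asymmetry with the direction you did get right: $\mathsf D^{<-m}\subseteq\Vcal'$ does give $\Ucal'\subseteq{}^{\perp_0}(\mathsf D^{<-m})\subseteq\mathsf D^{\ge -m}$ because one tests the last inclusion against shifts $I[k]$ of an injective cogenerator, but there is no analogous mechanism in the other direction. Concretely, take $\mathfrak t$ the standard t-structure $(\mathsf D^{\le 0},\mathsf D^{>0})$ in $\mathsf D(A)$, which is intermediate with $n=m=0$; then $\Vcal' = \mathsf D^{\le -1}$ in $\mathsf D(A^{op})$, and any non-projective $A^{op}$-module $M$ in degree $0$ lies in $\mathsf D^{\ge 0}\setminus\Ucal'$ since $\Hom(M,N[1])=\Ext^1(M,N)\neq 0$ for some $N$; if $A^{op}$ has infinite global dimension then no $\mathsf D^{\ge\ell}$ is contained in $\Ucal'$ at all. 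The correct two-sided bound is the one on the \emph{coaisle}, $\mathsf D^{<-m}\subseteq\Vcal'\subseteq\mathsf D^{<-n}$, which you already established and which is exactly the cointermediacy condition that is consistent with the remark following the definition and with the way Theorem~\ref{T:siltingduality} invokes Theorem~\ref{T:torsionpairduality}; you should simply stop there. Transferring such a sandwich between aisle and coaisle is exact for t-structures, since ${}^{\perp_0}(\mathsf D^{\ge\ell})=\mathsf D^{\le\ell-1}$ holds on the nose --- which is why your ``vice versa'' direction does go through by symmetry --- but it is not available for co-t-structures, and that is precisely the gap.
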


\begin{definition}
	A (co)silting object  in a compactly generated triangulated category $\T$ is of \emph{(co)finite type} provided that the (co)aisle of the corresponding (co)silting t-structure is of the form $\Scal^{\perp_0}$ for a set of compact objects $\Scal$.
\end{definition}

Let us focus on the case $\T=\mathsf D(A)$. We know from Theorem~\ref{finite type} and Example~\ref{notcoft} that all(bounded) silting complexes   are of finite type, while cosilting complexes need not be of cofinite type. On the other hand, by Corollary~\ref{cg}, every nondegenerate compactly generated t-structure is   a cosilting t-structure given by a pure-injective cosilting object of cofinite type.

\begin{theorem}\cite{AH2} \label{T:siltingduality}
There is a bijection between
\begin{enumerate}
\item[(i)] equivalence classes of (bounded) silting complexes in   $\mathsf D(A)$,
\item[(ii)] equivalence classes of (bounded) cosilting complexes of cofinite type in  $\mathsf D(A^{op})$,
\end{enumerate}
which, up to equivalence, maps  a silting complex $\sigma$ to the cosilting complex  $\sigma^+$. 
\end{theorem}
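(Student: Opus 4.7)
The plan is to chain the parametrisations of (co)silting complexes by (co)intermediate TTF triples (Theorems~\ref{bijections general} and~\ref{bijections cogeneral}) through the duality $\Psi$ of Theorem~\ref{T:torsionpairduality}, and then identify the resulting cosilting complex concretely with $\sigma^+$ via formula~(\ref{elementarydual}).

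For the construction, given a bounded silting complex $\sigma$ in $\mathsf D(A)$, I would take its associated intermediate suspended TTF triple $(\mathcal U,\mathcal V,\mathcal W)$ with $\mathcal V=\sigma^{\perp_{>0}}$. By Theorem~\ref{finite type} this triple is compactly generated, so there exists $\mathcal S\subset\mathsf D(A)^c$ with $\mathcal V=\mathcal S^{\perp_0}$. Applying $\Psi$ simultaneously to both torsion pairs $(\mathcal U,\mathcal V)$ and $(\mathcal V,\mathcal W)$ produces, via the restricted bijection in Theorem~\ref{T:torsionpairduality}, a t-structure and a co-t-structure in $\mathsf D(A^{op})$; since both images share the common middle class $(\mathcal S^*)^{\perp_0}$, they assemble into a cointermediate cosuspended TTF triple $(\mathcal U^{\vee},\mathcal V^{\vee},\mathcal W^{\vee})$ with $\mathcal V^{\vee}=(\mathcal S^*)^{\perp_0}$. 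By Theorem~\ref{bijections cogeneral} this TTF triple corresponds to an equivalence class of bounded cosilting complexes, and by construction it is generated by the compact set $\mathcal S^*$, so the associated cosilting complex is of cofinite type.

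To identify this complex with $\sigma^+$, I would show ${}^{\perp_{>0}}(\sigma^+)=\mathcal V^{\vee}$. Standard tensor-hom duality gives $\Hom_{\mathsf D(A^{op})}(Y,\sigma^+[i])\cong\Hom_{\mathsf D(A)}(\sigma[i],Y^+)$, so $Y\in{}^{\perp_{>0}}(\sigma^+)$ if and only if $Y^+\in\sigma^{\perp_{>0}}=\mathcal S^{\perp_0}$; conversely, the $A^{op}$-version of~(\ref{elementarydual}) gives $\Hom_{\mathsf D(A^{op})}(S^*,Y)\cong\Hom_{\mathsf D(A)}(S,Y^+)$ for every $S\in\mathcal S$, so $Y\in(\mathcal S^*)^{\perp_0}$ if and only if $Y^+\in\mathcal S^{\perp_0}$. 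Hence $\sigma^+$ defines the same cosilting t-structure as the TTF triple produced by $\Psi$, and bijectivity plus the inverse map follow by running the same argument starting from a bounded cosilting complex of cofinite type in $\mathsf D(A^{op})$ and invoking the dual of Theorem~\ref{bijections general}. I expect the principal obstacle to be this final identification: because neither $\sigma$ nor $\sigma^+$ is compact in general, formula~(\ref{elementarydual}) is not available for them directly, and one must navigate carefully through the compact generating set $\mathcal S$ provided by Theorem~\ref{finite type}; a subsidiary technicality is to verify cleanly that applying $\Psi$ to both torsion pairs of a compactly generated TTF triple really assembles into a TTF triple on the dual side, with cointermediacy preserved.
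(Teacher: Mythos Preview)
Your identification of $\sigma^+$ with the resulting cosilting complex (the final paragraph) matches the paper's argument, but there is a genuine gap in how you produce the cosuspended TTF triple on the $A^{op}$ side. You propose to apply $\Psi$ to \emph{both} constituent torsion pairs $(\Ucal,\Vcal)$ and $(\Vcal,\Wcal)$ of the intermediate suspended TTF triple. However, Theorem~\ref{T:torsionpairduality} applies only to compactly generated torsion pairs, and Theorem~\ref{finite type} says the TTF triple is compactly generated in the sense that $\Vcal=\Scal^{\perp_0}$ for compact $\Scal$ --- this is compact generation of the co-t-structure $(\Ucal,\Vcal)$, not of the t-structure $(\Vcal,\Wcal)$. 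The coaisle of the latter is $\Wcal=\sigma^{\perp_{\le 0}}$ with $\sigma$ not compact, and silting t-structures are in general only cosmashing, not smashing, so $(\Vcal,\Wcal)$ need not be compactly generated at all. Even granting a compact generating set $\Scal'$ for $(\Vcal,\Wcal)$, its image under $\Psi$ would be a torsion pair with coaisle $(\Scal'^*)^{\perp_0}$, bearing no a priori relation to $(\Scal^*)^{\perp_0}$; your assertion that ``both images share the common middle class $(\Scal^*)^{\perp_0}$'' is unjustified.

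The paper avoids this entirely: it applies $\Psi$ only to the co-t-structure $(\Ucal,\Vcal)$, obtaining a compactly generated intermediate t-structure $(\Ucal',\Vcal')$ in $\mathsf D(A^{op})$, and then invokes Theorem~\ref{coBondarko} (Bondarko) to supply the right-adjacent co-t-structure $(\Vcal',\Wcal')$. The inverse direction is handled the same way, again using Theorem~\ref{coBondarko} to complete a co-t-structure to a suspended TTF triple. This existence-of-adjacent result is the ingredient you are missing; what you flagged as a ``subsidiary technicality'' is in fact the substantive step.
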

\begin{proof}
We apply Theorem~\ref{T:torsionpairduality}. Consider
 an intermediate suspended TTF $(\Ucal,\Vcal,\Wcal)$ in   $\mathsf D(A)$, which is  compactly generated by Theorem~\ref{finite type}. The compactly generated cointermediate co-t-structure $(\Ucal,\Vcal)$ is  mapped by $\Psi$    to a compactly generated  intermediate t-structure $(\Ucal',\Vcal')$.
 % which is given by a pure-injective cosilting object of cofinite type. 
% Now one argues as in Theorem~\ref{bijections cogeneral}, using that $\Vcal'$ contains a shift of the standard coaisle, to prove 
Now Theorem~\ref{coBondarko}  yields the existence of a right adjacent co-t-structure. So, we obtain a compactly generated cosuspended cointermediate TTF $(\Ucal',\Vcal',\Wcal')$.
 % which corresponds to a cosilting complex as in (ii). 
Conversely, if we start with a compactly generated cosuspended cointermediate TTF $(\Ucal',\Vcal',\Wcal')$ in   $\mathsf D(A^{op})$, we know that the compactly generated intermediate t-structure  $(\Ucal',\Vcal')$ is the image under $\Psi$ of a compactly generated cointermediate co-t-structure, which has a right-adjacent t-structure by Theorem~\ref{coBondarko}, yielding  a compactly generated suspended intermediate TTF $(\Ucal,\Vcal,\Wcal)$.  
  The stated bijection  now follows from Theorems~\ref{bijections general} and~\ref{bijections cogeneral}.
  
  Moreover, we know from formula (\ref{elementarydual}) that  an object $X\in\mathsf D(A)$ belongs to $\Vcal'$ if and only if its dual $X^+$ belongs to the silting class $\Vcal=\sigma^{\perp_{>0}}$. Now one checks that the latter amounts to $X\in{}^{\perp_{>0}}C$ for $C=\sigma^+$. Hence $\Vcal'={}^{\perp_{>0}}C$ and standard arguments show 
  that $\Ucal'={}^{\perp_{\le 0}}C$.
  \end{proof}

\begin{remark} \label{injection}
Similarly, one can use $\Psi$ to  map equivalence classes of silting objects of finite type in   $\mathsf D(A)$ to
 equivalence classes of (pure-injective) cosilting objects of cofinite type in  $\mathsf D(A^{op})$. Indeed, one can show that the t-structure  $(\Ucal',\Vcal')$  is nondegenerate if so is the suspended TTF $(\Ucal,\Vcal,\Wcal)$. It then follows from Corollary~\ref{cg} that the assignment is well-defined  and obviously injective.  We will see that it is also surjective when the ring $A$ is commutative noetherian, cf.~Theorem~\ref{commnoeth}.
 \end{remark}

\subsection{Commutative noetherian rings.} 
Over a commutative noetherian ring, various types of torsion pairs, both in the module category and in the derived category, can be classified geometrically in terms of subsets of the prime spectrum. We will now employ such results to obtain  parametrisations of silting and cosilting objects.

 Throughout this subsection, $A$ will be a commutative noetherian ring, and $\Spec A$ will denote the prime spectrum of $A$ endowed with the Zariski topology, where the closed sets are the sets of the form
$ V(I) = \{ \p\in\Spec A \mid \p \supseteq I \}$ for some ideal $I \subseteq A$.
For $\p \in \Spec A$, we denote by $A_\p$ the localisation of $A$ at $\p$.
%, and by $k(\p)  = A_\p/\p_\p$ the residue field. 
Given a module $M$, we denote by $\Supp\, M=\{\p\in\Spec A\mid M\otimes_A A_\p\not=0\}$ the \emph{support} of $M$.

A subset $V \subseteq \Spec A$ is said to be \emph{closed under specialisation}  if it contains $V(\p)$ for any $\p \in V$. 
%In other words, $V$ is a upper set in the poset $(\Spec A,\subseteq )$.
Work of  Gabriel \cite{Gabriel} establishes a bijective correspondence between the subsets of $\Spec{A}$ closed under specialisation and the hereditary torsion pairs in $\Modr A$ via the assignment of support.
More precisely, given a hereditary torsion pair $(\mathcal X,\mathcal Y)$,  it is easy to see that the union of all supports of modules in $\Xcal$ forms a specialisation-closed subset of $\Spec A$. Conversely, 
every specialisation-closed subset $V \subseteq \Spec{A}$   determines a hereditary torsion pair $(\mathcal X_V,\mathcal Y_V)$ where the torsion class $\Xcal_V=\{ M\in \Modr A \mid \Supp{M}\subseteq V\}$ consists of the modules supported in $V$.

It is shown in \cite[Theorem 5.1]{AH1} that the hereditary torsion pairs in $\Modr A$ are precisely the  torsion pairs induced by two-term cosilting complexes, and moreover, the latter are all of cofinite type. This yields a parametrisation of silting and cosilting two-term complexes by specialisation-closed subsets  of $\Spec A$. Let us now turn to the general case.
%There is a similar correspondence on derived level, which is due to Hopkins and Neeman \cite{chrome}. Namely,  the subsets of $\Spec{A}$ closed under specialization are in bijection with the compactly generated t-structures $(\mathcal L, \mathcal K)$ in $\mathsf D(A)$ with $\mathcal L$ being triangulated, or in other words, with the compactly generated  localizing subcategories of $\mathsf D(A)$. The assignment maps a    specialization-closed subset $V \subseteq \Spec{A}$   to the  t-structure $(\mathcal L_V, \mathcal K_V)$ where $\mathcal L_V=\{X\in\mathsf D(A)\mid \Supp H^i(X)\subseteq V\text{ for all } i\in\mathbb Z\}$.

%\begin{align*}\T(Y) &= \{ M\in \ModA \mid \Supp{M}\subseteq Y\}=\{ M\in \ModA \mid \Hom_{A}({M},{E(A/\q)})=0 \textrm{ for all } \q\notin Y\}\\\F(Y) &= \{ M\in \ModA \mid \Ass{M}\cap Y=\emptyset\}=\{ M\in \ModA \mid \Hom_{A}({A/\p},{M})   =0 \textrm{ for all } \p\in Y\}\end{align*}In particular, $\T(Y)$ contains all $E(A/\p)$ with $\p\in Y$, and $\F(Y)$ contains all $E(A/\q)$ with $\q\notin Y$.

\begin{definition}  A \emph{filtration by supports} of $\Spec
A$ is a  map $\Phi :\mathbb Z\longrightarrow\mathcal{P}(\Spec A)$ such
that each $\Phi (n)$ is a subset of $\Spec A$ closed under
specialisation and  $\Phi (n)\supseteq\Phi (n+1)$ for all
$n\in\mathbb Z$.

Furthermore, we will say that {filtration by supports} $\Phi$ is 
\begin{itemize}
\item \emph{intermediate} if there are   integers $n\leq m$ such that $\Phi(n)=\Spec A$  and $\Phi(m)=\emptyset$; 
\item \emph{nondegenerate} if $\bigcup_{n\in\mathbb Z} \Phi(n)=\Spec A$  and  $\bigcap_{n\in\mathbb Z} \Phi(n)=\emptyset$. 
\end{itemize}
\end{definition}

Observe that every filtration by supports  corresponds to a family of hereditary torsion pairs $(\Xcal_n,\Ycal_n)_{n\in\mathbb Z}$ given by a decreasing sequence of torsion classes $\ldots\supseteq\Xcal_n\supseteq\Xcal_{n+1}\supseteq\ldots$ where each $\Xcal_n$ consists of the modules supported in $\Phi(n)$.

\begin{theorem}\cite[Theorem 3.11]{AJSa} \label{thm:AJS}
Let $A$ be a commutative noetherian ring. There is a bijective correspondence between
\begin{enumerate}
\item[(i)] compactly generated t-structures in $\mathsf D(A)$;
\item[(ii)] filtrations by supports of $\Spec A$.
\end{enumerate}
The correspondence from (i) to (ii) is given by
$(\mathcal{U},\Vcal)\mapsto \Phi_\mathcal{U}$, where
$$\Phi_\mathcal{U}(n)=\{\p\in\Spec A\text{:
}A/\p[-n]\in\mathcal{U}\},$$ while the correspondence from (ii) to
(i) maps $\Phi$ to the t-structure $(\mathcal{U}_\Phi
,\mathcal{V}_\Phi)$ where
$$\mathcal{U}_\Phi 
%=\mathsf{Susp}\{A/\p [-i]\text{: }i\in\mathbb Z,\p\in\Phi(i)\}
=\{X\in \mathsf D(A)\text{: }\Supp\, H^n(X)\subseteq\Phi (n)\text{ for all }n\in\mathbb Z\}.$$
\end{theorem}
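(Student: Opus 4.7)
The plan is to show the two assignments are well-defined and mutually inverse. First, I would verify that $\Phi_\Ucal$ is a filtration by supports. The decreasing condition $\Phi_\Ucal(n+1)\subseteq\Phi_\Ucal(n)$ follows immediately from $\Ucal[1]\subseteq\Ucal$. For specialisation-closedness, I would show that if $A/\p[-n]\in\Ucal$ and $\p\subsetneq\q$, then $A/\q[-n]\in\Ucal$, via a devissage inside the noetherian domain $A/\p$: picking $x\in\q\setminus\p$, multiplication by $x$ is injective on $A/\p$, so the triangle $A/\p\xrightarrow{x}A/\p\to A/(\p+(x))$ forces $A/(\p+(x))[-n]\in\Ucal$. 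Iterating with generators of the finitely generated ideal $\q/\p$, combined with prime filtrations of the finitely generated intermediate quotients and the closure of the aisle under extensions, coproducts and summands, eventually yields $A/\q[-n]\in\Ucal$.

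Next, given a filtration by supports $\Phi$, the set $\Scal_\Phi=\{A/\p[-n]:\p\in\Phi(n),\,n\in\mathbb{Z}\}$ consists of compact objects of $\mathsf{D}(A)$, so Theorem~\ref{AItorsionpair}(2) produces a compactly generated t-structure with aisle $\mathsf{Susp}(\Scal_\Phi)$. I would identify this aisle with the cohomological class $\Ucal_\Phi$. The inclusion $\mathsf{Susp}(\Scal_\Phi)\subseteq\Ucal_\Phi$ is routine: the generators $A/\p[-n]$ lie in $\Ucal_\Phi$ by specialisation-closedness of $\Phi(n)$, and $\Ucal_\Phi$ is closed under positive shifts (by the decreasing property of $\Phi$), extensions (via the long exact cohomology sequence), coproducts, and summands. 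For the reverse inclusion, given $X\in\Ucal_\Phi$, each cohomology module $H^n(X)$ is a filtered colimit of finitely generated submodules, each of which by noetherianity admits a finite prime filtration with subquotients $A/\q$ satisfying $\q\in\Supp H^n(X)\subseteq\Phi(n)$; realising these colimits as Milnor colimits and then assembling the shifted cohomologies via a Postnikov-type tower built from truncations places $X$ inside $\mathsf{Susp}(\Scal_\Phi)$.

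I would then establish mutual inversion. The identity $\Phi_{\Ucal_\Phi}=\Phi$ reduces to the observation that $A/\p[-n]\in\Ucal_\Phi$ is equivalent to $V(\p)\subseteq\Phi(n)$, which by specialisation-closedness of $\Phi(n)$ holds precisely when $\p\in\Phi(n)$. For the equality $\Ucal=\Ucal_{\Phi_\Ucal}$, the inclusion $\supseteq$ is immediate: the generators $A/\p[-n]$ of $\mathsf{Susp}(\Scal_{\Phi_\Ucal})$ lie in $\Ucal$ by construction of $\Phi_\Ucal$, and $\Ucal$ is closed under positive shifts, coproducts, extensions and summands.

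The main obstacle is the opposite inclusion $\Ucal\subseteq\Ucal_{\Phi_\Ucal}$. Since $\Ucal$ is generated by some set $\Scal$ of compact objects, it suffices to show each $S\in\Scal$ satisfies $\Supp H^n(S)\subseteq\Phi_\Ucal(n)$ for all $n$, i.e., that $A/\p[-n]\in\Ucal$ whenever $\p\in\Supp H^n(S)$. Using the specialisation-closedness already proved, one can reduce to primes $\p$ minimal in $\Supp H^n(S)$, at which $H^n(S)_\p$ has finite length over the localisation $A_\p$. I would then deploy a local argument, combining Koszul-type triangles built from generators of $\p$ applied to the perfect complex $S$ with the closure properties of the aisle, to extract $A/\p[-n]$ from $S$ via a finite sequence of triangles and summands that remain inside $\Ucal$. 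This local-cohomology-style extraction at minimal primes is the technical core of the theorem, relying essentially on noetherianity, the compactness (i.e., perfectness) of $S$, and the structure of a compactly generated aisle.
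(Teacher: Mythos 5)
The survey does not give its own proof of Theorem~\ref{thm:AJS}; the statement is quoted verbatim from [AJSa, Theorem 3.11], so your proposal can only be assessed on its own terms. The most concrete gap is the claim that $\Scal_\Phi=\{A/\p[-n]:\p\in\Phi(n),\,n\in\mathbb{Z}\}$ consists of compact objects of $\mathsf{D}(A)$. Compact objects in $\mathsf{D}(A)$ are the perfect complexes, and for a general commutative noetherian ring $A$ the module $A/\p$ has infinite projective dimension (already for $A=k[x]/(x^2)$, $\p=(x)$), hence is not compact. Theorem~\ref{AItorsionpair}(2) therefore does not apply to $\Scal_\Phi$, and your construction gives no reason to believe that $\mathsf{Susp}(\Scal_\Phi)$ is the aisle of a \emph{compactly generated} t-structure, which is what the classification requires. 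The standard repair, and the one used in [AJSa], is to replace $A/\p$ by a Koszul complex $K(\underline{x}_\p)$ on a finite generating set $\underline{x}_\p$ of $\p$; this is a perfect complex with $H^0\cong A/\p$ and all cohomology supported on $V(\p)$, so the shifts $K(\underline{x}_\p)[-n]$ for $\p\in\Phi(n)$ do lie in $\Ucal_\Phi$ and are genuinely compact.

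Beyond that, several steps are sketches that do not obviously close. For specialisation-closedness, iterating the cone triangle and invoking prime filtrations of $A/(\p+(x))$ founders on the fact that an aisle is closed under extensions, positive shifts, coproducts and summands but not under subquotients: from $A/(\p+(x))[-n]\in\Ucal$ you cannot extract the individual layers $A/\p_i[-n]$ of a prime filtration, so the devissage does not deliver $A/\q[-n]\in\Ucal$ without a further idea. The inclusion $\Ucal_\Phi\subseteq\mathsf{Susp}(\Scal_\Phi)$ via a Postnikov tower is problematic because $\Ucal_\Phi$ contains complexes unbounded to the left, and the tower $\tau^{\geq -k}X$ recovers $X$ as a homotopy \emph{limit} rather than a Milnor colimit, so the bounded pieces cannot simply be assembled inside the aisle by Milnor colimits as you suggest. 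Finally, you correctly identify $\Ucal\subseteq\Ucal_{\Phi_\Ucal}$ as the technical core, but what you offer there ("Koszul-type triangles", "local-cohomology-style extraction at minimal primes") is a plan rather than an argument; this reduction to stalk complexes at primes is precisely where [AJSa] invests the bulk of the proof, using localisation of perfect complexes and a careful noetherian induction, and none of that work is carried out in the proposal.
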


\begin{remark}\label{filtra} (1) A compactly generated t-structure $(\Ucal_\Phi, \Vcal_\Phi)$ is intermediate if and only if so is the associated filtration by supports $\Phi$. Indeed, the existence of   integers $n\leq m$ such that $\mathsf D^{\leq n}\subseteq \Ucal_{\Phi}\subseteq \mathsf D^{\leq m}$ means precisely that the  cohomologies of objects in $\Ucal_{\Phi}$ are arbitrary in degrees  $\le n$ and vanish in degrees $>m$. In other words, $\Phi(i)=\Spec A$ for all $i\le n$ and $\Phi(i)=\emptyset$ for all $i> m$.

\medskip 

(2) Similarly,  $(\Ucal_\Phi, \Vcal_\Phi)$ is nondegenerate if and only if so is the filtration by supports  $\Phi$.
 The  condition on the intersection of the $\Phi(n)$ follows immediately from the fact that $\bigcap_{n\in\mathbb Z}\Ucal_\Phi[n]$ consists of the objects $X\in\mathsf D(A)$
 whose cohomologies are supported in $\bigcap_{n\in\mathbb Z} \Phi(n)$.
Moreover, if $\Scal$ is a set of compact objects generating $(\Ucal_\Phi, \Vcal_\Phi)$, then the condition
$\bigcap_{n\in\mathbb Z}\Vcal_\Phi[n]=0$ holds if and only if $\Scal^{\perp_{\mathbb Z}}=0$, which  amounts to $\mathsf{Loc}(\Scal)=\mathsf D(A)$ by Theorem~\ref{CGR}. 
% and consider the localizing subcategory  $\mathsf{Loc}(\Scal)$  of $\mathsf D(A)$.By results of Hopkins and Neeman \cite{chrom},   the assignment of support on derived level  defines a one-one correspondence between localizing subcategories of $\mathsf D(A)$ and subsets of $\Spec A$. Under this bijection, compactly generated  localizing subcategories correspond to specialization-closed subsets. In particular, the  subset corresponding to $\mathsf{Loc}(\Scal)$ isprecisely $\bigcup_{n\in\mathbb Z} \Phi(n)$.Now , which, by the correspondence  from \cite{chrom}, 
Using the one-one correspondence  established by Hopkins and Neeman \cite{chrom} between localising subcategories of $\mathsf D(A)$ and subsets of $\Spec A$ via the assignment of support, one concludes that
the latter condition
is equivalent to $\bigcup_{n\in\mathbb Z} \Phi(n)=\Spec A$.\end{remark}

It is shown in \cite{Hrbek}  that over a commutative noetherian ring, every homotopically smashing  t-structure $(\Ucal,\Vcal)$ with $\Vcal\subseteq \mathsf D^{\geq m}$ for some $m\in\mathbb Z$ is compactly generated. 
Combining this with the results in Section~\ref{cosilt}, one obtains that  all cosilting complexes are of cofinite type. In fact, one can prove the following result which extends work from \cite{ASao,AH1}.

\begin{theorem}\cite{Hrbek,AH2}\label{commnoeth}
	Let $A$ be a commutative noetherian ring. There is a bijective correspondence between		
	\begin{enumerate}
	\item[(i)] equivalence classes of silting objects of finite type,
	\item[(ii)] equivalence classes of cosilting objects of cofinite type,
	\item[(iii)] 	nondegenerate	filtrations by supports,		\end{enumerate}
	which restricts to a bijective correspondence between		
	\begin{enumerate}
				\item[(i')] equivalence classes of (bounded) silting complexes,
			\item[(ii')] equivalence classes of (bounded) cosilting complexes,
	\item[(iii')] 	intermediate filtrations by supports.		
	\end{enumerate}
In particular, all (bounded) cosilting complexes are of cofinite type.
\end{theorem}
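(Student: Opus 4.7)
The plan is to establish the bijection (ii) $\leftrightarrow$ (iii) first, and then to deduce (i) $\leftrightarrow$ (ii) via the silting-cosilting duality $\Psi$ of Theorem~\ref{T:torsionpairduality}. For (ii) $\leftrightarrow$ (iii): every cosilting object of cofinite type defines, by definition, a nondegenerate compactly generated t-structure, and conversely Corollary~\ref{cg} (applied to the algebraic compactly generated category $\mathsf D(A)$, which underlies a strong and stable derivator) shows that every nondegenerate compactly generated t-structure arises uniquely in this way from a pure-injective cosilting object of cofinite type. Composing with the classification of Theorem~\ref{thm:AJS} and the nondegeneracy match recorded in Remark~\ref{filtra}(2) yields (ii) $\leftrightarrow$ (iii). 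For (i) $\leftrightarrow$ (ii), Remark~\ref{injection} already supplies a well-defined injection $\sigma \mapsto \sigma^+$ (since $A$ is commutative, we identify $\mathsf D(A^{op})$ with $\mathsf D(A)$), so the task reduces to surjectivity.

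The surjectivity argument is the core of the proof. Given a cosilting object $C$ of cofinite type, its cosilting t-structure $({}^{\perp_{\leq 0}} C, {}^{\perp_{>0}} C)$ is compactly generated and nondegenerate. Applying $\Psi$ (Theorem~\ref{T:torsionpairduality}) transports it to a compactly generated co-t-structure $(\Ucal, \Vcal)$ in $\mathsf D(A)$, and Theorem~\ref{coBondarko} provides a right-adjacent t-structure, yielding a compactly generated suspended TTF triple $(\Ucal, \Vcal, \Wcal)$. I would verify that this TTF triple is nondegenerate by transferring information through the classification by filtrations by supports: the compactly generated t-structure $(\Vcal, \Wcal)$ corresponds via Theorem~\ref{thm:AJS} to some filtration whose nondegeneracy follows from the one for $C$ by a direct bookkeeping through $\Psi$. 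Then by Theorem~\ref{NSZ-bijection} the TTF is induced by a silting object $T$, and compact generation makes $T$ of finite type. Finally, a direct computation using formula~(\ref{elementarydual}) shows that the cosilting t-structure of $T^+$ coincides with that of $C$, so $T^+$ is equivalent to $C$. I expect the tracking of nondegeneracy across $\Psi$ and the explicit comparison between $T^+$ and $C$ to be the subtle part of the argument.

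The restricted bijection (i') $\leftrightarrow$ (ii') $\leftrightarrow$ (iii') then falls out. By Proposition~\ref{FJ}(1) every bounded cosilting complex is pure-injective, and by Theorem~\ref{R} its associated smashing t-structure is homotopically smashing; since the coaisle of a bounded complex of injectives sits inside $\mathsf D^{\geq m}$ for some $m$ (by Theorem~\ref{bijections cogeneral}), the result of Hrbek cited just before the theorem yields compact generation, so every bounded cosilting complex is automatically of cofinite type. Remark~\ref{filtra}(1) then identifies its image in (iii) with the intermediate filtrations. On the silting side, Theorem~\ref{bijections general} identifies bounded silting complexes with intermediate suspended TTF triples, which are compactly generated by Theorem~\ref{finite type}, so the restriction of (i) $\leftrightarrow$ (iii) to the intermediate case delivers (i') $\leftrightarrow$ (iii'); the announced consequence that every bounded cosilting complex is of cofinite type emerges as a byproduct.
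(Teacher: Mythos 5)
Your overall strategy matches the paper's: establish (ii)$\leftrightarrow$(iii) via the classification of compactly generated t-structures, note that Remark~\ref{injection} gives a well-defined injection (i)$\to$(ii), and then prove surjectivity by pulling a compactly generated nondegenerate t-structure $(\Ucal',\Vcal')$ back through $\Psi$ and Theorem~\ref{coBondarko} to a compactly generated suspended TTF triple $(\Ucal,\Vcal,\Wcal)$. The restricted bijection for bounded complexes also agrees with the paper.

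However, your nondegeneracy argument has a genuine gap. You propose to apply Theorem~\ref{thm:AJS} to the t-structure $(\Vcal,\Wcal)$, but you have not established that $(\Vcal,\Wcal)$ is a compactly generated t-structure. What you know is that the TTF triple $(\Ucal,\Vcal,\Wcal)$ is generated by a set of compact objects $\Scal$, i.e.\ $\Vcal=\Scal^{\perp_0}$; this makes $(\Ucal,\Vcal)$ a compactly generated co-t-structure. Its right-adjacent t-structure $(\Vcal,\Wcal)$ is produced abstractly by Brown representability (Theorem~\ref{coBondarko}) and there is no reason its coaisle $\Wcal$ should be of the form $\Tcal^{\perp_0}$ for compact $\Tcal$, so Theorem~\ref{thm:AJS} does not apply to it, and no filtration by supports is a priori attached to it. This is precisely where the paper has to do real work: it uses the nondegeneracy of the filtration attached to $(\Ucal',\Vcal')$ (which genuinely is a compactly generated t-structure) to show that for every prime $\p$ some shift $R_\p[n]$ lies in $\Vcal$, then invokes the Hopkins--Neeman classification of localising subcategories of $\mathsf D(A)$ to conclude $\mathsf{Loc}(\Vcal)=\mathsf D(A)$ and hence $\bigcap_{n\in\mathbb Z}\Wcal[n]=0$; the vanishing of $\bigcap_{n\in\mathbb Z}\Vcal[n]=\Scal^{\perp_{\mathbb Z}}$ follows separately from the fact that $\Scal$ generates. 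Your ``direct bookkeeping through $\Psi$'' would need to be replaced by this stalk-complex/localising-subcategory argument, since the filtration-by-supports machinery is simply not available on the side of $(\Vcal,\Wcal)$.
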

\begin{proof} The bijections (ii)$\leftrightarrow$(iii) 
and (ii')$\leftrightarrow$(iii') 
follow from Remark~\ref{filtra},  and Theorems~\ref{NSZ-cobijection2} and~\ref{bijections cogeneral}.
Moreover, we have seen in Remark~\ref{injection} that there is always an injective map (i)$\to$(ii). To prove the surjectivity, we consider a compactly generated nondegenerate t-structure $(\Ucal',\Vcal')$, which we know to correspond under $\Psi$ to a compactly generated suspended TTF triple $(\Ucal,\Vcal,\Wcal)$. Using that the filtration by supports  corresponding to $(\Ucal',\Vcal')$ is nondegenerate, one can prove that for any $\p\in\Spec A$ there is an integer $n$ such that $R_\p[n]$ lies in  $\Vcal$. Then $\mathsf{Loc}(\Vcal)$ contains the stalk complex $R_\p$ for each $ \p\in\Spec A$. Like  in Remark~\ref{filtra}(2), one can use that localising subcategories of $\mathsf D(A)$ are in bijection with  subsets of $\Spec A$ via the assignment of support to conclude that $\bigcap_{n\in\mathbb Z}\Wcal[n]=0$. It follows that
the suspended TTF triple $(\Ucal,\Vcal,\Wcal)$ is nondegenerate and thus corresponds to a silting object.
 \end{proof}
 
 We can now pin down an example of an unbounded (co)silting object of (co)finite type in $\mathsf{D}(A)$. All we need to do is  to find a filtration by supports  which is nondegenerate, but not intermediate. 
 
\begin{example}\cite{AH2}\label{unbounded}
	Let $\mathbb{P} = \{p_1,p_2,p_3,\ldots\}$ be an ordering of all prime numbers. We consider the following filtration by supports $\Phi$ in $\Spec(\mathbb{Z})$:
	$$\Phi(n) = \begin{cases} \Spec(\mathbb{Z}) & \text{for $n < 0$}, \\ \mathbb{P} \setminus \{p_1,p_2,\ldots,p_n\} & \text{for $n \geq 0$}. \end{cases}.$$
		Under the bijections of Theorem~\ref{commnoeth}, the filtration by supports $\Phi$ corresponds to the (unbounded) cosilting   object
	$$C = \bigoplus_{n \geq 0} \mathbb{J}_{p_{n+1}}[-n] \oplus \mathbb{Q}$$
	and to  the (unbounded) silting object
	$$T = \bigoplus_{n \geq 0} \mathbb{Z}^\infty_{p_{n+1}}[n] \oplus \mathbb{Q}$$
	where $\mathbb{J}_p$  and  $\mathbb{Z}^\infty_p$ denote the group of $p$-adic integers and the $p$-Pr\"ufer group corresponding to a prime $p$, respectively. 
	\end{example}	
	
	Similar examples can be constructed  over finite dimensional hereditary algebras, e.g.~over the path algebra of the Kronecker quiver $\xy\xymatrixcolsep{2pc}\xymatrix{ \bullet \ar@<0.5ex>[r]  \ar@<-0.5ex>[r] & \bullet } \endxy$.

 \subsection{Silting-discrete algebras}\label{discrete}
We now turn to a class of algebras where classification is possible as   there are only finitely many silting complexes up to equivalence and shift. This is phrased in terms of ``local finiteness'' of the poset of basic silting objects, that is,  any interval of the poset contains only finitely many objects. 

\smallskip

Throughout this section, we denote by $A$ a finite dimensional algebra over a field $k$ and set $\T=\mathsf{K}^b(\mathrm{proj}(A))$. Recall that the silting subcategories of $\Tcal$ are parametrised by the compact basic silting complexes in $\mathsf{D}(A)$, see Remark~\ref{commentstodef}(2). The poset of silting subcategories 
 $(\mathsf{silt}\,\T, \ge)$ can then be regarded as poset of basic silting complexes, where  $$T_1\ge T_2\quad\text{ if and only if  }\quad T_1^{\perp_{>0}}\ge T_2^{\perp_{>0}}.$$
\begin{definition}
A finite dimensional algebra $A$ is said to be {\it silting-discrete} if for any two objects 
$M_1,M_2$ in $\mathsf{silt}\,\T$ with $M_1\ge M_2$ there are only finitely many objects $T$ in $\mathsf{silt}\,\T$ with $M_1\ge T\ge M_2$. 
\end{definition}

We collect some  handy characterisations of silting-discrete algebras below. In particular, we are going to see that the definition can be rephrased as follows: for any $n$ there are only finitely many intermediate t-structures $(\Vcal,\Wcal)$ in $\mathsf{D}(A)$ with $\mathsf D^{\leq -n}\subset \Vcal\subset\mathsf D^{\leq 0}.$

\begin{theorem}\cite[Proposition 3.8]{Aihara},\cite[Theorem 2.4]{AM} \cite[Proposition 3.27]{AMY}
The following statements are equivalent for a finite dimensional algebra $A$.
\begin{enumerate}
\item $A$ is silting-discrete.
\item There is an object $M$  in $\mathsf{silt}\,\T$ such that for each integer $k>0$ there are only finitely many objects $T$ in $\mathsf{silt}\,\T$ satisfying $M\ge T\ge M[k]$.
\item For any object $M$  in $\mathsf{silt}\,\T$ and each integer $k>0$ there are only finitely many objects $T$ in $\mathsf{silt}\,\T$ with $M\ge T\ge M[k]$.
\item For any  $M$  in $\mathsf{silt}\,\T$ there are only finitely many objects $T$ in $\mathsf{silt}\,\T$ with $M\ge T\ge M[1]$.
\item For any $M$ in $\mathsf{silt}\,\T$ the finite dimensional algebra $\End_{\mathsf D(A)}(M)$ is $\tau$-tilting finite.
\end{enumerate}
\end{theorem}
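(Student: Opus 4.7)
The plan is to establish the circular chain (1) $\Rightarrow$ (3) $\Rightarrow$ (2) $\Rightarrow$ (1), together with (3) $\Rightarrow$ (4) $\Rightarrow$ (2), and separately the equivalence (4) $\Leftrightarrow$ (5). The implications (1) $\Rightarrow$ (3), (3) $\Rightarrow$ (2) and (3) $\Rightarrow$ (4) are immediate by specialising to intervals of the form $[M,M[k]]$ or to $k=1$.

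For (2) $\Rightarrow$ (1), I would first note that $M_1 \geq M_2$ translates, by definition, to $M_1^{\perp_{>0}} \supseteq M_2^{\perp_{>0}}$, and that shifting both sides by $[-k]$ preserves this containment; hence the order $\geq$ is invariant under simultaneous shift of both operands. By Proposition~\ref{require}(2), any silting object $N \in \T$ sits as a direct summand of an iterated extension in $\add(M)[-\ell] \ast \cdots \ast \add(M)[\ell]$ for some $\ell \geq 0$, and a direct computation on perp-orthogonals forces $N \geq M[\ell]$; a symmetric argument, with the roles of $M$ and $N$ exchanged, yields $M[-\ell'] \geq N$ for some $\ell'\geq 0$. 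Applying this observation to a pair $M_1 \geq T \geq M_2$ of silting objects in $\mathsf{silt}\,\T$, I obtain integers $a \leq b$ such that $T \in [M[a], M[b]]$, an interval in bijection (by shift-invariance) with $[M, M[b-a]]$, which is finite by (2).

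The equivalence (4) $\Leftrightarrow$ (5) combines silting reduction in the sense of Iyama-Yang with Theorem~\ref{AIRbij}. Given $M \in \mathsf{silt}\,\T$ and $B = \End_{\mathsf D(A)}(M)$, silting reduction identifies the subposet $[M, M[1]]$ of $\mathsf{silt}\,\T$ with the poset of compact two-term silting complexes in $\mathsf{K}^b(\mathrm{proj}(B))$, which by Theorem~\ref{AIRbij} is in bijection with the set of isomorphism classes of basic support $\tau$-tilting $B$-modules. By the definition following Theorem~\ref{DIJ}, these are finite in number precisely when $B$ is $\tau$-tilting finite, so (4) and (5) are equivalent.

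The main obstacle is the implication (4) $\Rightarrow$ (2), upgrading one-step finiteness to finiteness of $[M, M[k]]$ for all $k \geq 1$. I would proceed by induction on $k$, with the base case $k=1$ being (4) itself. For the inductive step I would exploit connectedness of $[M, M[k]]$ under silting mutation: any $T$ in this interval is reached from $M$ by a finite sequence of irreducible left mutations all of which remain inside $[M, M[k]]$. Each mutation replaces one indecomposable summand and, by (4) applied to the current silting object, there are only finitely many options at each step. To bound the length of such mutation chains, one observes that after one step $M \mapsto \mu_X^-(M) \in [M, M[1]]$, any continuation of the chain to reach $T$ is forced (after suitable renormalisation by the silting reduction identification) into an interval of the shape $[M', M'[k-1]]$, which is finite by the inductive hypothesis. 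Controlling both the branching and the depth of this mutation tree yields finiteness of $[M,M[k]]$, and the combinatorial bookkeeping required for this tree argument — rather than any single homological input — is the technical heart of the theorem.
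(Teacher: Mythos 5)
Your reductions among the easy implications are correct, and your argument for (2) $\Rightarrow$ (1) is clean: applying Proposition~\ref{require}(2) to each of $M_1$ and $M_2$ with respect to the silting subcategory $\add(M)$ produces integers $a \leq b$ with $M[a] \geq M_1$ and $M_2 \geq M[b]$, so by transitivity every $T \in [M_1,M_2]$ lies in $[M[a],M[b]]$, which by shift-invariance of $\geq$ is in bijection with $[M,M[b-a]]$ and hence finite by (2). Your treatment of (4) $\Leftrightarrow$ (5) matches the paper's own explanation in content, although what you call ``silting reduction'' is not the Iyama--Yang quotient construction but rather the bijection of \cite[Proposition 3.27]{AMY} between the interval $[M,M[1]]$ and the functorially finite torsion classes in $\modr E$; composing with Theorem~\ref{AIRbij} and Theorem~\ref{DIJ} then gives the equivalence.

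The genuine gap is in (4) $\Rightarrow$ (2), which is the implication your chain actually needs. Your sketch invokes ``connectedness of $[M,M[k]]$ under silting mutation'' and then describes a mutation tree with bounded branching. But connectedness of the Hasse quiver is a \emph{consequence} of silting-discreteness — it appears later in this very survey as \cite[Corollary 3.9]{Aihara} — so it cannot be assumed here without circularity. Moreover the proposed descent is not well-founded as stated: after an irreducible left mutation $M \mapsto M' \in [M,M[1]]$, the interval left to control is $[M',M[k]]$; since $M' \geq M[1]$ one gets $M'[k-1] \geq M[k]$, which is the \emph{wrong} inequality for concluding $[M',M[k]] \subseteq [M',M'[k-1]]$, so the inductive hypothesis does not apply. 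Bounding the depth of the mutation tree — equivalently, showing that a chain of greedy one-step descents from $M$ actually reaches $T$ in a controlled number of steps — is precisely where the cited proofs in Aihara and Aihara--Mizuno do nontrivial work (using Bongartz-type completions and the lattice structure of the finite two-term interval), and the paper deliberately leaves this to those references. Labelling it ``combinatorial bookkeeping'' understates the difficulty and leaves the technical core of the theorem unproved.
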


A few words on the last condition are in order. 
In \cite[Proposition 3.27]{AMY} it is shown that for any basic silting complex $M$ with endomorphism ring $E=\End_{\mathsf D(A)}(M)$, there is a bijection between the subset $2-\mathsf{silt}_M\,\T$ of  $\mathsf{silt}\,\T$ consisting of the objects $T$ such that  $M\ge T\ge M[1]$ and the functorially finite torsion classes in $\modr E$. Keeping in mind  that the latter are in bijection with isomorphism classes of basic support $\tau$-tilting  $E$-modules (Theorem~\ref{AIRbij}), one obtains the equivalence of conditions (4) and (5).
Now one can use 
 Theorem~\ref{DIJ} which states that  a finite dimensional algebra is $\tau$-tilting finite if and only if all silting modules are finite dimensional up to equivalence. In fact, this allows to prove the following result.
  
 \begin{theorem}\cite{APV}
A finite dimensional algebra is silting discrete if and only if  all silting complexes are compact up to equivalence.
\end{theorem}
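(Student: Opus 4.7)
The plan is to go via the equivalent characterisation of silting-discreteness in the preceding theorem, specifically condition~(5): $A$ is silting-discrete iff $E=\End_{\mathsf D(A)}(M)$ is $\tau$-tilting finite for every basic $M\in\mathsf{silt}\,\T$. By Theorem~\ref{DIJ}(3), this is in turn equivalent to every silting $E$-module being finite-dimensional up to equivalence. The pivotal technical step is an extension of the compact bijection in Theorem~\ref{AIRbij} to the large setting: for compact silting $M$ with $E=\End(M)$, silting objects $T\in\mathsf D(A)$ satisfying $M\ge T\ge M[1]$ should correspond, up to equivalence, to silting $E$-modules, with $T$ equivalent to a compact silting object precisely when the associated module is finite-dimensional. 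I would realise this bridge via HRS-tilting inside the heart $\Hcal_M$, which by Proposition~\ref{silting heart}(1) is equivalent to $\Modr E$: a silting $T$ in $[M[1],M]$ yields a torsion pair in $\Hcal_M\cong\Modr E$ whose torsion class corresponds to a silting $E$-module by Theorem~\ref{HKM}, and the construction is reversible.

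Given this bridge, the implication ($\Leftarrow$) is immediate. Suppose every silting complex in $\mathsf D(A)$ is compact up to equivalence, fix $M\in\mathsf{silt}\,\T$, and let $N$ be a silting $E$-module. The bridge produces a silting object $T\in[M[1],M]$ in $\mathsf D(A)$; by hypothesis $T$ is compact up to equivalence, and the bridge read in reverse yields that $N$ is finite-dimensional up to equivalence. Since $N$ and $M$ were arbitrary, $E$ is $\tau$-tilting finite for every such $M$ by Theorem~\ref{DIJ}, so condition~(5) of the preceding theorem holds and $A$ is silting-discrete.

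For ($\Rightarrow$), assume $A$ is silting-discrete and let $T\in\mathsf D(A)$ be silting. The strategy is to exhibit a compact silting $M$ with $M\ge T\ge M[1]$; the bridge, together with the $\tau$-tilting finiteness of $E=\End(M)$, then forces $T$ to be equivalent to a compact silting object. After a shift one may arrange $\Vcal_T=\mathsf{Susp}(T)\subseteq\mathsf D^{\le 0}$, giving $A\ge T$. One then seeks an integer $k$ with $A[k]\le T$, placing $T$ inside the interval $[A,A[k]]$ which contains only finitely many compact silting objects by silting-discreteness. A finite chain of compact silting mutations subdivides $[A,A[k]]$ into unit-length subintervals, and $T$ must land in some $[M,M[1]]$ with $M$ compact silting, after which the bridge applies.

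The main obstacle will be producing the lower bound $A[k]\le T$, since general silting objects need not be bounded complexes (cf.~Example~\ref{unbounded}). I expect this step to use silting-discreteness decisively, by contradiction: if no such $k$ exists, one combines the $\tau$-tilting finiteness of endomorphism algebras along an infinite mutation path with the extended bridge to construct infinitely many distinct compact silting objects inside a fixed bounded interval $[A,A[k']]$, contradicting the local finiteness of $\mathsf{silt}\,\T$. Once boundedness is secured, the combinatorics of the finite interval together with the HRS-bridge complete the argument.
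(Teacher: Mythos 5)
The paper cites \cite{APV}, which is listed as in preparation, so there is no proof here to compare against; your strategy via condition~(5) of the preceding theorem, Theorem~\ref{DIJ}, and an HRS-tilting ``bridge'' is consistent with the hint the author gives in the preceding paragraph. But you have manufactured a phantom obstacle. The theorem concerns silting \emph{complexes}, which by Definition~\ref{silting complex} are by fiat \emph{bounded} complexes of projectives, and which by Theorem~\ref{bijections general} correspond precisely to \emph{intermediate} suspended TTF triples, i.e.\ $\mathsf D^{\leq n}\subseteq\Vcal_T\subseteq\mathsf D^{\leq m}$ for some $n\leq m$. So the lower bound $A[k]\le T$ that you call the ``main obstacle'' and expect to extract ``decisively'' from silting-discreteness by contradiction is in fact automatic from the hypothesis that $T$ is a silting complex, before silting-discreteness is even invoked. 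Your appeal to Example~\ref{unbounded} is beside the point: unbounded silting objects exist, but they are not silting complexes in the sense of this statement, and the theorem makes no assertion about them. You have conflated silting objects with silting complexes.

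Once that confusion is removed, what remains is the genuinely substantive step, and it is the one you treat most casually: the claim that, given an intermediate $T$ with $A[n]\leq T\leq A$ and $A$ silting-discrete, one can always locate a \emph{compact} silting $M$ with $M[1]\leq T\leq M$. You wave at this with ``a finite chain of compact silting mutations subdivides $[A,A[n]]$ into unit-length subintervals, and $T$ must land in some $[M,M[1]]$,'' but the poset $(\mathsf{silt}\,\T,\ge)$ is not a total order and there is no a priori reason a non-compact $T$ is comparable to the intermediate compact objects in a way that traps it in a unit box; this needs an actual argument (e.g.\ a minimality/induction argument on the compact aisles containing $\Vcal_T$, or a step-by-step peeling via torsion pairs in successive hearts). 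Likewise the ``bridge'' itself — that large silting objects in $[M[1],M]$ biject with large silting $\End(M)$-modules, with compactness matching finite-dimensionality — is asserted rather than proved; the compact case is \cite[Prop.~3.27]{AMY} and the case $M=A$ follows from Theorem~\ref{HKM}, but the general large case for arbitrary compact $M$ requires justification (one must show the torsion class $\Vcal_T\cap\Hcal_M$ in $\Hcal_M\simeq\Modr E$ is of the form $\mathsf{Gen}(N)$ for a silting $E$-module $N$, not merely that it is a torsion class). These are the places the real proof lives.
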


The classification of silting complexes over silting-discrete algebras thus reduces to compact complexes. Moreover,  the latter can all be obtained by iterated silting mutation from the silting object $A$.

\begin{theorem}\cite[Corollary 3.9]{Aihara} If $A$ is a silting-discrete algebra, then the Hasse quiver of the poset $(\mathsf{silt}\,\T,\ge)$ is connected.\end{theorem}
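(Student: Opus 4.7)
The plan is to sandwich every silting complex between two shifts of the distinguished silting object $A$, apply silting-discreteness to make the sandwich a finite interval in $(\mathsf{silt}\,\T,\ge)$, and then weave everything together via the descending chain $\cdots\ge A[-1]\ge A\ge A[1]\ge\cdots$.

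\textbf{Sandwiching.} Let $T\in\mathsf{silt}\,\T$ be an arbitrary basic silting complex, represented as a bounded complex of finitely generated projectives concentrated in degrees $[a,b]$. A direct computation from the definitions shows $\Vcal_{A[k]}=A[k]^{\perp_{>0}}=\T\cap \mathsf D^{\le -k}$. Since $T$ is concentrated in degrees $[a,b]$, we have $T\in \mathsf D^{\le b}=\Vcal_{A[-b]}$; as $\Vcal_T$ is by Theorem~\ref{MSSS} the smallest suspended subcategory of $\T$ containing $T$, this gives $\Vcal_T\subseteq\Vcal_{A[-b]}$, i.e.\ $A[-b]\ge T$. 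In the other direction, the projective-complex range $[a,b]$ of $T$ forces $\Hom_{\mathsf D(A)}(T,A[j])=0$ for every $j>-a$, placing $A[-a]$ in $\Vcal_T=T^{\perp_{>0}}$, whence $T\ge A[-a]$. Thus $T\in[A[-a],A[-b]]$ in $(\mathsf{silt}\,\T,\ge)$.

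\textbf{From finite intervals to Hasse-quiver paths.} By silting-discreteness, the interval $[A[-a],A[-b]]$ is finite. In any finite poset, two comparable elements are connected by a saturated chain $M_1=N_0>N_1>\cdots>N_s=M_2$: take any maximal chain in the sub-interval and note that finiteness forces each step to be a cover relation. Since the Hasse quiver has an arrow for each cover relation, this yields Hasse-quiver paths from $A[-b]$ down to $T$ and from $T$ down to $A[-a]$.

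\textbf{Gluing the shifts.} Applying the same reasoning to each interval $[A[k+1],A[k]]$, which is finite by silting-discreteness, produces a Hasse-quiver path between consecutive shifts of $A$. Concatenating these paths joins $A$ in the Hasse quiver to every shift $A[k]$, and hence to the arbitrary silting complex $T$ via the previous step, proving connectedness. The substantive content of the argument is the sandwiching step, which converts the bounded-complex nature of $T$ into a concrete location inside the silting poset; the rest is formal once cover relations are identified with Hasse-quiver edges, with silting-discreteness providing finiteness of all the intervals that need to be traversed.
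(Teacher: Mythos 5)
Your argument is correct, and the paper cites this result from Aihara without reproducing a proof; your approach (sandwich each $T$ between shifts of $A$, use silting-discreteness to get finite intervals, and traverse them by saturated chains) is the standard one behind Aihara's Corollary~3.9. A few small remarks. The order of the sandwich should be noted explicitly: since $a\le b$ one has $A[-b]^{\perp_{>0}}=\T\cap\mathsf D^{\le b}\supseteq\T\cap\mathsf D^{\le a}=A[-a]^{\perp_{>0}}$, so $A[-b]\ge A[-a]$ and the interval $[A[-a],A[-b]]$ is nonempty. In the second half of the sandwich you go from ``$A[-a]\in T^{\perp_{>0}}$'' to ``$T\ge A[-a]$''; this requires one more sentence: by Theorem~\ref{MSSS}, $A[-a]^{\perp_{>0}}$ is the smallest suspended subcategory of $\T$ containing $A[-a]$, hence it is contained in the suspended subcategory $T^{\perp_{>0}}$, giving the desired inclusion. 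Finally, when passing from a saturated chain in the interval to a Hasse-quiver path, one should observe that a cover relation inside the interval $[A[-a],A[-b]]$ is automatically a cover relation in the full poset (any $Z$ strictly between two elements of the interval lies in the interval), so the edges you use really are Hasse edges. Your last paragraph connecting $A[k]$ to $A[k+1]$ is in fact a special case of the sandwich already established, but it is harmless to spell it out.
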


\begin{examples} 
(1) Every  local algebra is silting-discrete since  $\mathsf{silt}\,\T=\{A[i] \mid i\in\mathbb Z\}$, see \cite[Theorem 2.26]{AI}.

(2) A finite-dimensional hereditary algebra is silting-discrete if and only if it is $\tau$-tilting-finite, or equivalently, it has  finite representation type.

(3) Every symmetric algebra of finite representation type is silting-discrete \cite{Aihara}. 

(4) A finite-dimensional algebra $A$ is said to be \emph{derived-discrete} if for every map $v:\mathbb Z\to K_0(\mathsf{D}^b(\modr A))$ 
%to the Grothendieck group $K_0(\mathsf{D}^b(\modr A))$ of the bounded derived category of $\modr A$, 
there are only finitely many isomorphism classes of objects $D$ in $\mathsf{D}^b(\modr A)$ such that  the equality $[H^i(D)]=v(i)$ holds in  $K_0(\mathsf{D}^b(\modr A))$ for all $i\in\mathbb Z$.
%These algebras are intermediate in complexity between representation-finite and representation-tame hereditary algebras. 
 %The study of such algebras was initiated by Vossieck, who classified them up to Morita equivalence.  A canonical form for the derived equivalence class of such algebras is provided in \cite{Bob}. 
 In \cite[Theorem 6.12]{BPP2}, it is proven that all derived-discrete algebras of finite global dimension are silting-discrete.

(5) Every  preprojective algebra of Dynkin type is silting-discrete \cite{AM}.  

(6) Algebras of dihedral, semidihedral or quaternion type are shown to be silting-discrete in \cite{Eisele}. These are the algebras occurring in Erdmann's classification of tame blocks of group algebras of finite groups. 

(7) The Brauer graph algebras that 
are tilting-discrete are determined in \cite{AAChan}.
\end{examples}

Many of the examples above come along with 
  classification results. For example,
  employing results from \cite{Mizuno}, it is shown in \cite{AM}  that the two-term silting complexes over a  preprojective algebra of Dynkin type $A$ are in bijection with the elements of the Weyl group of the underlying Dynkin quiver   $\Delta$. This is used to  
prove  that $A$  is silting-discrete and to give a complete classication of the tilting complexes in $\mathsf{D}(A)$ in terms of the braid group of the folded graph of $\Delta$. 
  
Moreover, \cite{Eisele} contains a  combinatorial description of support-$\tau$-tilting modules over string algebras, which leads,   via a reduction theorem, to a classification of all two-term tilting complexes over  algebras of dihedral, semidihedral and quaternion type. 
 The case of  
 a Brauer graph algebra is treated in \cite{AAChan},
for Brauer tree algebras see also \cite{Zv}.  Notice that here compact silting and tilting complexes coincide as the algebras are symmetric, cf.~\cite[Example 2.8]{AI}.

A classification of all silting complexes over 
derived-discrete algebras of finite global dimension is given in \cite{BPP1}. Over such algebras, the heart of every  bounded t-structure is  a length category. According to Theorem~\ref{thm:ky-bijection}, one thus obtains also a classification of all bounded co-t-structures and t-structures in $\T$.
This is used in \cite{BPP2} to 
provide a combinatorial approach to the study of Bridgeland's stability conditions. As a consequence, it is shown that the stability manifold of a derived-discrete algebra of finite global dimension is always contractible. More recently, this result has been extended to the class of all silting-discrete algebras in \cite{PSZ}, and independently, in \cite{AMY}. Further results relating stability conditions with silting theory can be found in \cite{BST}.

\smallskip

Let us close the paper with a further interesting feature of silting-discrete algebras.
\begin{definition} An object $T$ in a triangulated category $\T$ is said to be \emph{presilting} if $T$ lies in $T^{\perp_{>0}}$. If, in addition,  the category $T^{\perp_{>0}}$ is closed under all existing coproducts, then $T$ is said to be \emph{partial silting}.
\end{definition}

\begin{theorem}\cite[Theorem 2.15]{AM} If $A$ is a silting-discrete algebra, then any presilting object $T$ in $\T=\mathsf{K}^b(\mathrm{proj}(A))$ admits a complement $T'$ such that $T\oplus T'$ belongs to $\mathsf{silt}\,\T$.
\end{theorem}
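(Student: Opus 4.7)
The plan is to exploit the finiteness guaranteed by silting-discreteness in order to reduce the completion problem to the two-term setting, where it translates into the combinatorially cleaner problem of completing a $\tau$-rigid module to a support $\tau$-tilting module.

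After shifting, I may assume that $T$ is concentrated in degrees $[-n,0]$, so $T\in A^{\perp_{>0}}$. Using finite-dimensionality of $A$ together with iterated silting mutation of $A$, I aim to produce a silting object $M\in\mathsf{silt}\,\T$ such that $T$ lies in the two-term window $M\ge T\ge M[1]$; equivalently, $T\in M^{\perp_{>0}}$ and $\Hom_\T(T,M[j])=0$ for all $j\ge 2$. Silting-discreteness ensures that the interval $[A[k],A]$ of $(\mathsf{silt}\,\T,\ge)$ is finite for the relevant $k\ge n$, which controls the reduction and, in particular, forces any iterative mutation procedure to terminate.

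For the two-term completion step, set $E=\End_\T(M)$. Since $A$ is silting-discrete, $E$ is $\tau$-tilting finite (this is condition (5) in the characterisation recalled before the theorem). Via the bijection between $2-\mathsf{silt}_M\,\T=\{N\in\mathsf{silt}\,\T\mid M\ge N\ge M[1]\}$ and basic support $\tau$-tilting $E$-modules (Theorem~\ref{AIRbij} applied to $E$, in the form described in the paragraph following it), $T$ corresponds to a basic $\tau$-rigid $E$-module $\overline{T}$. I then invoke the following completion principle: over a $\tau$-tilting finite algebra, every $\tau$-rigid module is a direct summand of a basic support $\tau$-tilting module. This is a Bongartz-type statement that follows from the finiteness of the lattice of functorially finite torsion classes in $\modr E$ guaranteed by Theorem~\ref{DIJ}(2). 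Pulling the resulting support $\tau$-tilting module back through the bijection yields $M'\in 2-\mathsf{silt}_M\,\T$ with $T\in\add M'$, so that $M'=T\oplus T'$ provides the required complement.

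The main obstacle is the very first step, namely producing a silting $M$ such that $T$ sits in a two-term window relative to $M$. When $T$ has cohomological length one, one may simply take $M=A$. For $T$ of longer length, the natural strategy is to iterate silting mutations of $A$, at each stage mutating out a summand obstructing the two-term inclusion, and then to use silting-discreteness to guarantee that this process halts within the finite interval $[A[k],A]$. Making the choice of mutation direction precise, and verifying that $T$ remains dominated by the mutated silting object throughout, are the principal technicalities; once they are in place, the $\tau$-rigid completion principle of the previous paragraph finishes the proof.
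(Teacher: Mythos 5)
Your two-step outline---first place $T$ in a two-term window $M\ge T\ge M[1]$ relative to a silting object $M$, then lift a Bongartz completion of the corresponding $\tau$-rigid $\End_\T(M)$-module through the bijection between $2-\mathsf{silt}_M\,\T$ and basic support $\tau$-tilting $\End_\T(M)$-modules---is the right shape of argument, and it does reflect the spirit of the proof in \cite{AM}. But the first step, which you flag as ``the main obstacle'' and then set aside as a cluster of ``principal technicalities,'' is in fact the whole content of the theorem. A presilting $T$ of cohomological length $n>1$ does not obviously sit inside a two-term window for \emph{any} silting $M$: if one already knew $T$ to be a summand of a silting object $S$, one could simply take $M=S$, but that is precisely the conclusion being sought. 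Your sketch---iteratively mutate $A$ ``towards $T$'' and invoke silting-discreteness for termination---comes with no argument that each mutated silting object $M'$ continues to satisfy $\Hom_\T(M',T[i])=0$ for $i>0$, nor that the width of $T$ with respect to $M'$ strictly drops at each stage; neither assertion is automatic, and a single mutation can destroy the domination $M'\ge T$. As written, the proposal restates the difficulty rather than resolving it.

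A secondary remark: invoking $\tau$-tilting finiteness of $E=\End_\T(M)$ (via Theorem~\ref{DIJ}) for the completion step is unnecessary. Bongartz completion of a $\tau$-rigid pair to a support $\tau$-tilting pair holds over \emph{every} finite-dimensional algebra by \cite{AIR}, with no finiteness hypothesis on the $\tau$-tilting poset. So silting-discreteness enters the argument solely through the reduction to the two-term window---which is exactly the step you have not supplied---and recognising this sharpens rather than softens the gap.
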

The existence of complements plays an important role in many contexts, see e.g.~\cite[Corollary 0.7]{AIR}, \cite[Theorem 3.12]{AMV1}, \cite[Section 5]{BPP2}. The case of a large presilting object will be discussed in the forthcoming paper \cite{APV}. The paper \cite{AMV5} will be devoted to partial silting objects. It will be shown that in the derived category $\mathsf D(A)$ of an arbitrary ring $A$ every set $\Sigma\subset\mathsf{K}^b(\mathrm{proj}(A))$ of compact objects admits a partial silting object $T$ such that $\Sigma^{\perp_{\mathbb Z}}=T^{\perp_{\mathbb Z}}$, in analogy with a result for two-term complexes from  \cite{MS}.

 %%%%%%%%%%%%%%%%%%%%%%%%%%%%%%%%%%%%%%%%%%
%\newpage
%%%%%%%%%%%%%%%%%%%%%%%%%%%%%%%%%%%%%%%%%%

\end{document}